\documentclass[11pt,reqno,twoside]{article}
\usepackage{mysty}

\usepackage{wrapfig}
\usepackage{mdwlist}

\usepackage{fullpage}
\flushbottom
%\usepackage{showframe}

% table of contents
\usepackage{patchcmd,pifont}
\newcommand{\cmark}{\ding{51}}%
\newcommand{\xmark}{\ding{55}}%

\makeatletter
%\patchcommand\@starttoc{\begin{quote}}{\end{quote}}
\makeatother

\usepackage{enumitem}
%\usepackage{enumerate}

%\newcounter{tempp}
%\setcounter{page}{1}

%\setitemize{label=\usebeamerfont*{itemize item}%
%\usebeamercolor[fg]{itemize item}
%\usebeamertemplate{itemize item}}

\SetLabelAlign{parright}{\parbox[t]{\labelwidth}{\raggedleft{#1}}}
\setlist[description]{style=multiline,topsep=4pt,align=parright}%,font=\normalfont

\setlength{\parindent}{12pt}
%\setlength{\parskip}{0.6ex plus 0.2ex minus 0.2ex}

%\setlength{\jot}{4pt} % line space of equations, slightly bigger than default

% iref: item ref
\makeatletter
\let\reftagform@=\tagform@
\def\tagform@#1{\maketag@@@{(\ignorespaces\textcolor{black}{#1}\unskip\@@italiccorr)}}
\newcommand{\iref}[1]{\textup{\reftagform@{\tcr{\ref{#1}}}}}
\makeatother

%%%%%%%%%%%%%%%%%%%%%%%%%%%%%%%%%%%%%%%%%%%%%%
\begin{document}
%%%%%%%%%%%%%%%%%%%%%%%%%%%%%%%%%%%
\title{Inertial Methods with Viscous and Hessian driven Damping for Non-Convex Optimization}
\author{Rodrigo Maulen-Soto, Jalal Fadili, Peter Ochs}
\date{\today\footnote{The insight and motivation for the study of inertial methods with viscous and Hessian driven damping came from the inspiring collaboration with our beloved friend and colleague Hedy Attouch before his unfortunate recent departure. We hope this paper is a valuable step in honoring his legacy.
}}
\maketitle

\begin{abstract}
In this paper, we aim to study non-convex minimization problems via second-order (in-time) dynamics, including a non-vanishing viscous damping and a geometric Hessian-driven damping. Second-order systems that only rely on a viscous damping may suffer from oscillation problems towards the minima, while the inclusion of a Hessian-driven damping term is known to reduce this effect without explicit construction of the Hessian in practice. There are essentially two ways to introduce the Hessian-driven damping term: explicitly or implicitly. For each setting, we provide conditions on the damping coefficients to ensure convergence of the gradient towards zero. Moreover, if the objective function is definable, we show global convergence of the trajectory towards a critical point as well as convergence rates. Besides, in the autonomous case, if the objective function is Morse, we conclude that the trajectory converges to a local minimum of the objective for almost all initializations. We also study algorithmic schemes for both dynamics and prove discrete analogues of the previous properties under appropriate stepsize conditions. In particular, we consider the case where the objective is only locally Lipschitz smooth and propose a backtracking strategy for which we establish convergence guarantees. Our work is the first one that handles this situation.
\end{abstract}
\begin{keywords}
Non-convex optimization; Inertial gradient systems, Hessian Damping, K{\L} inequality, Convergence, Trap avoidance, Backtracking, Asymptotic behavior.
\end{keywords}
\begin{AMS}
37N40, 46N10, 49M15, 65B99, 65K05, 65K10, 90B50, 90C26, 90C53
\end{AMS}
%\begin{flushleft}\end{flushleft}
%%%%%%%%%%%%%%%%%%%%%%%%%%%%%%%%%%%

%%%%%%%%%%%%%%%%%%%%%%%%%%%%%%%%%%%

%\tableofcontents

%%%%%%%%%%%%%%%%%%%%%%%%%%%%%%%%%%%
%\input{tex/sec-Intro-correction}

%%%%%%%%%%%%%%%%%%%%%%%%%%%%%%%%%%%%%%%%%%%%%%%%%%%%%%
\section{Introduction}\label{sec:intro}
%%%%%%%%%%%%%%%%%%%%%%%%%%%%%%%%%%%%%%%%%%%%%%%%%%%%%%

%%%%%%%%%%%%%%%%%%%%%%%%%%%%%%
\subsection{Problem statement}

Let us consider the minimization problem
\begin{equation}\label{P}\tag{P}
    \min_{x\in \R^d} f(x),
\end{equation}
where the objective function $f:\R^d\rightarrow\R$ satisfies the following standing assumptions:
\begin{equation}\label{H0}\tag{$\mathrm{H_0}$}
    \begin{cases}
        f\in C^2(\R^d);\\
        %\text{$\nabla f$ is locally Lipschitz continuous;}\\
        \inf f> -\infty.
    \end{cases}
\end{equation}
%\begin{align}
%\begin{cases}
%\text{$f\in C^2(\R^d)$}; \\
%%\text{$\nabla f$ is locally Lipschitz continuous;}\\
%%\mathrm{crit}(f)\neq\emptyset;\\
%\text{$\inf f >-\infty$.}
%\end{cases}
%\end{align} 

\begin{comment}
    Since the objective function is potentially non-convex, the problem \eqref{P} is NP-Hard. Therefore, until now, we cannot propose an algorithm that takes polynomial time to ensure convergence to the global minimizer of \eqref{P} in general. However, there are methods to ensure theoretical convergence to a local minimizer; in practice, there are techniques to (heuristically) converge to a global minimizer in some cases.
\end{comment}
    
    Since the objective function is potentially non-convex, the problem \eqref{P} is NP-Hard. However, there are tractable methods to ensure theoretical convergence to a critical point, or even to a local minimizer. In this regard, a fundamental dynamic to consider is the gradient flow system:

\begin{equation}
\begin{cases}\label{gf}\tag{GF}
\begin{aligned}
\dot{x}(t)+\nabla f(x(t))&= 0,\quad t>0;\\
x(0)&=x_0.
\end{aligned}
\end{cases}
\end{equation}

 For any bounded solution of \eqref{gf}, using LaSalle's invariance principle, we check that $\lim_{t\rightarrow +\infty}\nabla f(x(t))=0$. If $d=1$, any bounded solution of \eqref{gf} tends to a critical point. For $d\geq 2$ this becomes false in general, as shown in the counterexample by \cite{palis}. In order to avoid such behaviors, it is necessary to work with functions that present a certain structure. An assumption that will be central in our paper for the study of our dynamics and algorithms is that the function $f$ satisfies the Kurdyka-{\L}ojasiewicz (K{\L}) inequality \cite{loj1,loj2,kur}, which means, roughly speaking, that $f$ is sharp up to a reparametrization. The K{\L} inequality, including in its nonsmooth version, has been successfully used to analyze the asymptotic behavior of various types of dynamical systems \cite{chill, aris,boltearis,boltequasi} and algorithms \cite{bolte,frankel,noll,aujol,pol, absil, nonsm,tam, ipiano}\footnote{This list is by no means exhaustive.}. The importance of the K{\L} inequality comes from the fact that many problems encountered in optimization involve functions satisfying such an inequality, and it is often elementary to check that the latter is satisfied; \eg real semialgebraic/analytic functions \cite{loj1,loj2}, functions definable in an o-minimal structure and more generally tame functions \cite{kur,clarke}. 

\smallskip

The dynamic \eqref{gf} is known to yield a convergence rate of $\mathcal{O}(t^{-1})$ of the values in the convex setting (in fact even $o(t^{-1})$). 
Second-order inertial dynamical systems have been introduced to provably accelerate the convergence behavior in the convex case. They typically take the form
\begin{equation}\tag{$\mathrm{IGS}_{\gamma}$}\label{AVD}
\ddot{x}(t)+  \gamma (t)\dot{x}(t)  +\nabla f(x(t))=0,\quad t>t_0,
\end{equation}
where $t_0>0$, $\gamma: [t_0,+\infty[ \to \R_+$ is a time-dependent viscous damping coefficient. An abundant literature has been devoted to the study of the inertial dynamics \eqref{AVD}. The importance of working with a time-dependent viscous damping coefficient to obtain acceleration was stressed by several authors; see \eg \cite{AC1}. In particular, the case $\gamma(t)=\frac{\alpha}{t}$ was considered by Su, Boyd, and Cand\`es \cite{su}, who were the first to show the rate of convergence $\mathcal{O}(t^{-2})$ of the values in the convex setting for $\alpha \geq 3$, thus making the link with the accelerated gradient method of Nesterov \cite{1983}. For $\alpha > 3$, an even better rate of convergence with little-$o$ instead of big-$\mathcal O$ can be obtained together with global convergence of the trajectory; see \cite{cabot,faster1k2} and \cite{CD} for the discrete algorithmic case.

\smallskip

Another remarkable instance of \eqref{AVD} corresponds to the well-known Heavy Ball with Friction (HBF) method, where $\gamma(t)$ is a constant, first introduced (in its discrete and continuous form) by Polyak in \cite{speedingup}. When $f$ is strongly convex, it was shown that the trajectory converges exponentially with an optimal convergence rate if $\gamma$ is properly chosen as a function of the strong convexity modulus. The convex case was later studied in \cite{Alv} with a convergence rate on the values of only $\mathcal{O}(t^{-1})$. When $f$ is non-convex, HBF was investigated both for the continuous dynamics \cite{27,heavyb,goudou,apido} and discrete algorithms \cite{noncon, ipiano, peter}.  

\bigskip

However, because of the inertial aspects, %and the asymptotic vanishing viscous damping coefficient,
\eqref{AVD} may exhibit many small oscillations which are not desirable from an optimization point of view. To remedy this, a powerful tool consists in introducing into the dynamic a geometric damping driven by the Hessian of $f$. This gives the Inertial System with Explicit Hessian Damping which reads
\begin{equation}
\begin{cases}\label{eq:isehd}\tag{ISEHD}
\begin{aligned}
&\ddot{x}(t)+\gamma(t)\dot{x}(t)+\beta(t)\nabla^2 f(x(t))\dot{x}(t)+\nabla f(x(t))= 0,\quad t>t_0;\\
&x(t_0)=x_0, \quad \dot{x}(t_0)=v_0 ,
\end{aligned}
\end{cases}
\end{equation}
where $\gamma,\beta:[t_0,+\infty[\rightarrow\R_+$. \eqref{eq:isehd} was proposed in \cite{AABR} (see also \cite{APR2,9}).
%and Implicit and \eqref{eq:isihd}, respectively, follow the following differential equations starting at $t_0> 0$ with initial condition $x_0,v_0\in \R^d$:
%where $ \gamma$ and $\beta$ are, the already presented, damping parameters.
%The time discretization of this system has been studied by Attouch, Chbani, Fadili, and Riahi \cite{riahi}. It provides a rich family of first-order methods for minimizing $f$.
The second system we consider, inspired by~\cite{alecsa} (see also \cite{MJ} for a related autonomous system) is
\begin{equation}
\begin{cases}\label{eq:isihd}\tag{ISIHD}
\begin{aligned}
&\ddot{x}(t)+\gamma(t)\dot{x}(t)+\nabla f(x(t)+\beta(t)\dot{x}(t)) = 0,\quad t>t_0;\\
&x(t_0)=x_0, \quad \dot{x}(t_0)=v_0 .
\end{aligned}
\end{cases}
\end{equation}
\eqref{eq:isihd} stands for Inertial System with Implicit Hessian Damping.  The rationale behind the use of the term ``implicit'' comes from a Taylor expansion of the gradient term (as $t \to +\infty$ we expect $\dot{x}(t) \to 0$) around $x(t)$, which makes the Hessian damping appear indirectly in \eqref{eq:isihd}. Following the physical interpretation of these two ODEs, we call the non-negative parameters $\gamma$ and $\beta$ the viscous and geometric damping coefficients, respectively. The two ODEs \eqref{eq:isehd} and \eqref{eq:isihd} were found to have a smoothing effect on the energy error and oscillations \cite{alecsa,Muehlebach19,9}. Moreover, in \cite{hessianpert} they obtain fast convergence rates for the values for the two ODEs when $\gamma(t)=\frac{\alpha}{t}$ ($\alpha>3$) and $\beta(t)=\beta>0$. However, the previous results are exclusive to the convex case. Nevertheless, in \cite{casterainertial}, the authors analyzes \eqref{eq:isehd} with constant viscous and geometric damping in a non-convex setting, concluding that the bounded solution trajectories converges to a critical point of the objective under {\L}ojasiewicz inequality and giving sublinear convergence rates. Moreover, in \cite{casteraavoid} the author shows that the previous dynamic avoids strict saddle points when the objective is Morse. In these two works they propose a discretization called INNA, where the conditions on the stepsize are more stringent than ours but let them consider arbitrary values for the geometric damping. To the best of our knowledge, there is no further analysis of \eqref{eq:isehd} and \eqref{eq:isihd} when the objective is non-convex and definable. In this work, our goal is to fill this gap.

%%%%%%%%%%%%%%%%%%%%%%%%%%%%%%
\subsection{Contributions}
In this paper, we analyze the convergence properties of \eqref{eq:isehd} and \eqref{eq:isihd} when $f$ is non-convex, and when the time-dependent viscous damping coefficient $\gamma$ is non-vanishing and the geometric damping is constant, \ie $\beta(t)\equiv\beta\geq 0$. We will also propose appropriate discretizations of these dynamics and establish the corresponding convergence guarantees for the resulting discrete algorithms. More precisely, our main contributions can be summarized as follows:
\begin{itemize}
\item We provide a Lyapunov analysis of \eqref{eq:isehd} and \eqref{eq:isihd} and show convergence of the gradient to zero and convergence of the values. Moreover, assuming that the objective function is definable, we prove the convergence of the trajectory to a critical point (see Theorem~\ref{convisehd} and \ref{convisihd}). Furthermore, when $f$ is also Morse, using the center stable manifold theorem, we establish a generic convergence of the trajectory to a local minimum of $f$ (see Theorem~\ref{morseisehd} and \ref{morseisihd}).
\item We provide convergence rates of \eqref{eq:isehd} and \eqref{eq:isihd} and show that they depend on the desingularizing function of the Lyapunov energy (see Theorems \ref{cor} and \ref{cori}).
\item By appropriately discretizing \eqref{eq:isehd} and \eqref{eq:isihd}, we propose algorithmic schemes and prove the corresponding convergence properties, which are the counterparts of the ones shown for the continuous-time dynamics. In particular, assuming that the objective function is definable and that its gradient is globally Lipschitz, we show global convergence of the iterates of both schemes to a critical point of $f$. Furthermore, a generic strict saddle points (see Definition \ref{sspoint}) avoidance result will also be proved (see Theorem~\ref{convisehddisc} and \ref{convisihddisc}). 
\item Convergence rates for the discrete schemes will also be established under the {\L}ojasiewicz property where the convergence rate will be shown to depend on the exponent of the desingularizing function (see Theorem~\ref{rates} and \ref{ratesi}).
\item Inspired by the discretizations of \eqref{eq:isehd} and \eqref{eq:isihd}, we propose algorithmic schemes that use a backtracking-based stepsize to ensure convergence to a critical point of the objective, assuming only local Lipschitz continuity of the gradient rather than global (see Theorem ~\ref{convisehddiscgencoeff-back} and \ref{convisihddiscgencoeff-back}). The viscous and geometric damping terms depend on the stepsize as specified in the corresponding theorem statements.
\end{itemize}
We will report a few numerical experiments to support the above findings.

%%%%%%%%%%%%%%%%%%%%%%%%%%%%%%
\subsection{Relation to prior work}
\paragraph{Continuous-time dynamics.}
There is abundant literature regarding the dynamics \eqref{eq:isehd} and \eqref{eq:isihd}, either in the exact case or with deterministic errors, but solely in the convex case; see \cite{alecsa, hessianpert,27,35,8,11,20,34,37,19,9,10}). Nevertheless, to the best of our knowledge, except for two papers that we will discuss below, the non-convex setting is barely considered in the literature..

\paragraph{Heavy ball-like methods.}
As mentioned before, the HBF method, was first introduced by Polyak in \cite{speedingup} where linear convergence was shown for $f$ strongly convex. Convergence rates of HBF taking into account the geometry of the objective can be found in \cite{aujol} for the convex case and \cite{peter} for the non-convex case.

\smallskip

In \cite{haraux}, the authors study the system 
$$\ddot{x}(t)+G(\dot{x}(t))+\nabla f(x(t))=0,$$
where $G:\R^d\rightarrow\R^d$ is such that $\langle G(v),v\rangle\geq c\Vert v\Vert^2$ and $\Vert G(v)\Vert \leq C\Vert v\Vert$, $\forall v\in\R^d$, for some $0<c\leq C$. They show that when $f$ is real analytic (hence verifies the {\L}ojasiewicz inequality), the trajectory converges to a critical point. To put it in our terms, the analysis is equivalent to study \eqref{AVD} in the case where there exists $c,C>0$ such that $0<c\leq \gamma(t)\leq C$ for every $t  \geq 0$ (see assumption \eqref{gamma}), letting us to conclude as in \cite{haraux}. We will extend this result to the systems \eqref{eq:isehd} and \eqref{eq:isihd} which will necessitate new arguments.

\smallskip

HBF was also studied in the non-convex case by \cite{boltequasi} where it was shown that it can be viewed as a quasi-gradient system. They also proved that a desingularizing function of the objective desingularizes the total energy and its deformed versions. They used this to establish the convergence of the trajectory for instance in the definable case. Global convergence of the HBF trajectory was also shown in \cite{heavyb} when $f$ is a Morse function \footnote{It turns out that a Morse function satisfies the {\L}ojasiewicz inequality with exponent $1/2$; see \cite{AttouchAlternating10}.}. 
\paragraph{Inertial algorithms.}
%\todo{Add a literature overview including my work, that of Peter, that of Bot, that of Goudou and Munier, and others.}.

In the literature regarding algorithmic schemes that include inertia in the non-convex setting, we can mention: \cite{jingwei} which proposes a multi-step inertial algorithm using Forward-Backward for non-convex optimization. In \cite{goudou} they study quasiconvex functions and use an implicit discretization of HBF to derive a proximal algorithm. In \cite{ipiano}, they introduce an inertial Forward-Backward splitting method, called iPiano-a generalization of HBF, they show convergence of the values and a convergence rate of the algorithm. Besides, in \cite{peter}, the author presents local convergence results for iPiano. Then, in \cite{blockcoord, anabstract} several abstract convergence theorems are presented to apply them to different versions of iPiano. In \cite{bot1}, the authors propose a Tseng-type inertial algorithm, and in \cite{proximal,bot2,fba,gip}, they introduce Forward-Backward inertial algorithms for nonconvex and nonsmooth optimization problems. In the previous works, they assume K{\L}-like inequality to conclude with some type of convergence (weak or strong) towards a critical point of the objective. 

\smallskip

In this sense, the closest work to ours is \cite{casterainertial}, where they study \eqref{eq:isehd} when the viscous and geometric dampings are positive constants, and a discretization of this dynamic called INNA. Assuming that the objective function is semi-algebraic and the solution trajectories (resp. iterations) are bounded, their main results are that: both the dynamic and the discretization converge to critical points of the objective (using approximation theory), and that the continuous dynamic exhibits a sublinear convergence rate. Our work extends beyond this, it considers not only the explicit version \eqref{eq:isehd} but also the implicit one \eqref{eq:isihd},  and also allows variable (non-vanishing) viscous damping. Even though the choice of the value of the geometric damping is less general than the one presented in \cite{casterainertial}, we propose a less stringent discretization regarding the stepsize, and conclude with the convergence of the dynamic to a critical point under general K{\L} property on the objective, which includes semi-algebraic functions as a special case. Additionally, we show convergence rates under K{\L} inequality. Moreover, we propose new algorithms and an independent analysis for the discrete setting, showing convergence of the proposed algorithms to critical points under K{\L}-inequality, and we also exhibit convergence rates of the proposed algorithms under {\L}ojasiewicz inequality.

\paragraph{Trap avoidance.}
In the non-convex setting, generic strict saddle point avoidance of descent-like algorithms has been studied by several authors building on the (center) stable manifold theorem \cite[Theorem~III.7]{shub_global_1987} which finds its roots in the work of Poincar\'e. Genericity is in general either with respect to initialization or with respect to random perturbation. Note that genericity results for quite general systems, even in infinite dimensional spaces, is an important topic in the dynamical system theory; see \eg \cite{Brunovsky97} and references therein.

\smallskip

First-order descent methods can circumvent strict saddle points provided that they are augmented with unbiased noise whose variance is sufficiently large in each direction. Here, the seminal works of \cite{BrandiereDuflo96} and \cite{Pemantle90} allow to establish that the stochastic gradient descent (and more generally the Robbins-Monro stochastic approximation algorithm) avoids strict saddle points almost surely. Those results were extended to the discrete version of HBF by \cite{agd} who showed that perturbation allows to escape strict saddle points, and it does so faster than gradient descent. In \cite{gadat2} and \cite{hbnoise}, the authors analyze a stochastic version of HBF (in continuous-time), showing convergence towards a local minimum under different conditions on the noise. In this paper, we only study genericity of trap avoidance with respect to initialization.

\smallskip

Recently, there has been active research on how gradient-type descent algorithms escape strict saddle points generically on initialization; see \eg \cite{Lee16, localminima, panageas, cedric} and references therein. In \cite{goudou}, the authors were concerned with HBF and showed that if the objective function is $C^2$, coercive and Morse, then generically on initialization, the solution trajectory converges to a local minimum of $f$. A similar result is also stated in \cite{heavyb}. The algorithmic counterpart of this result was established in \cite{wright} who proved that the discrete version of HBF escapes strict saddles points for almost all initializations. 

\smallskip

We have to mention that the closest work in this regard is \cite{casteraavoid}, where the author studies \eqref{eq:isehd} when the viscous and geometric dampings are positive constants and the discretization INNA proposed in \cite{casterainertial}. Using the continuous and discrete version of the global stable manifold theorem, the author shows that both the continuous-time dynamic (when the objective is Morse) and the INNA algorithm almost always avoids strict saddle points. 

\smallskip

Our goal in this paper is to establish the same kind of results for \eqref{eq:isehd} and \eqref{eq:isihd} with variable, non-vanishing viscous damping and constant geometric damping, as well as for the proposed algorithms \eqref{isehd-disc} and \eqref{isihd-disc}. We would like to point out that the proof for the continuous-time case will necessitate (as in \cite{casteraavoid}) a more stringent assumption on the class of functions, for instance, that $f$ is Morse, while this is not necessary for the discrete algorithms.

\paragraph{Locally Lipschitz gradient objectives}
To the best of our knowledge, our work is the only one that handles the case where the objective has a locally Lipschitz continuous gradient in discrete algorithms with inertia and Hessian-driven damping—a crucial aspect, as many applications in machine learning do not satisfy global Lipschitz continuity of the objective gradient.

\paragraph{Overview.}
Table~\ref{tab:compare_methods} provides a high-level summary of our contributions and comparison to closely related works. Each entry indicates whether the corresponding work addresses convergence rates and/or trap avoidance for different methods. A checkmark (\cmark) indicates the aspect is addressed, while a cross (\xmark) indicates it is not. 

\begin{table}[H]\label{cont}
\begin{tabular}{|c!{\vrule width 1.5pt}cc|cc|cc|}
\hline 
\multicolumn{7}{|c|}{Continuous-time dynamics} \\\hline
 & \multicolumn{2}{c|}{Heavy Ball}    & \multicolumn{2}{c|}{ISEHD}    & \multicolumn{2}{c|}{ISIHD}    \\ 
\hline
  &  \multicolumn{1}{c|}{Conv. \& rates} & Trap avoid. & \multicolumn{1}{c|}{Conv. \& rates} & Trap avoid. & \multicolumn{1}{c|}{Conv. \& rates} &  Trap avoid. \\ \noalign{\hrule height 1.5pt} \hline
\cite{haraux} & \multicolumn{1}{c|}{\cmark} & \xmark & \multicolumn{1}{c|}{\xmark} & \xmark & \multicolumn{1}{c|}{\xmark} & \xmark \\ \hline
\cite{heavyb,goudou} & \multicolumn{1}{c|}{\cmark} & \cmark & \multicolumn{1}{c|}{\xmark} & \xmark & \multicolumn{1}{c|}{\xmark} & \xmark \\ \hline
\cite{casterainertial}& \multicolumn{1}{c|}{\cmark} & \xmark &  \multicolumn{1}{c|}{\cmark} & \xmark & \multicolumn{1}{c|}{\xmark} & \xmark \\ \hline
 \cite{casteraavoid}& \multicolumn{1}{c|}{\xmark} & \cmark &  \multicolumn{1}{c|}{\xmark} & \cmark & \multicolumn{1}{c|}{\xmark} & \xmark \\ \hline
 Ours & \multicolumn{1}{c|}{\cmark} & \cmark & \multicolumn{1}{c|}{\cmark} & \cmark & \multicolumn{1}{c|}{\cmark} & \cmark \\ \hline
\hline 
\multicolumn{7}{|c|}{Algorithms} \\\hline
& \multicolumn{2}{c|}{Heavy Ball}    & \multicolumn{2}{c|}{ISEHD}    & \multicolumn{2}{c|}{ISIHD}    \\ 
\hline
  &  \multicolumn{1}{c|}{Conv. \& rates} & Trap avoid. & \multicolumn{1}{c|}{Conv. \& rates} & Trap avoid. & \multicolumn{1}{c|}{Conv. \& rates} &  Trap avoid. \\ \noalign{\hrule height 1.5pt}
 \hline
 \cite{agd,wright} & \multicolumn{1}{c|}{\cmark} & \cmark & \multicolumn{1}{c|}{\xmark} & \xmark & \multicolumn{1}{c|}{\xmark} & \xmark \\ \hline
\cite{fba,gip} & \multicolumn{1}{c|}{\cmark} & \xmark & \multicolumn{1}{c|}{\xmark} & \xmark & \multicolumn{1}{c|}{\cmark} & \xmark \\ \hline
\cite{casterainertial}& \multicolumn{1}{c|}{\cmark} & \xmark &  \multicolumn{1}{c|}{\cmark} & \xmark & \multicolumn{1}{c|}{\xmark} & \xmark \\ \hline
 \cite{casteraavoid}& \multicolumn{1}{c|}{\xmark} & \cmark &  \multicolumn{1}{c|}{\xmark} & \cmark & \multicolumn{1}{c|}{\xmark} & \xmark \\ \hline
 Ours & \multicolumn{1}{c|}{\cmark} & \cmark & \multicolumn{1}{c|}{\cmark} & \cmark & \multicolumn{1}{c|}{\cmark} & \cmark \\ \hline
\end{tabular}
\caption{Overview of comparison between our contributions and related work.}
    \label{tab:compare_methods}
\end{table}

We note that \cite{fba,gip} and \cite{casterainertial,casteraavoid} analyze different algorithms from ours as they stem from different discretizations of the underlying continuous-time dynamics. %but follow the same underlying approach: applying finite differences to instances of \eqref{eq:isihd} and \eqref{eq:isehd}, respectively.

%Furthermore, if $f\in C^2(\R^d)$ is a Morse function, the global stable manifold theorem ensures us that for almost all initializations $x_0\in\R^d$, $x(t)$ converges (as $t\rightarrow +\infty$) to a local minimum of $f$. This is also true in the discrete setting (gradient descent algorithm with constant stepsize) without assuming the Morse condition but rather that all the saddle points (including local maximums) are strict, $f\in C^2(\R^d)$, $\nabla f$ is $L-$Lipschitz, and the stepsize is less than $\frac{1}{L}$ (see \cite[Corollary 2]{localminima}). Moreover, in \cite{panageas} they show avoidance of strict saddle points, replacing the global Lipschitianity condition by a uniform bound of the Hessian over a convex domain, a forward invariant condition, and a proper tuning of the stepsize. Furthermore, in a recent work (see \cite{cedric}), the author maintains the hypothesis that all saddle points are strict, and gets rid of the global Lipschitianity condition of the gradient, replacing it with the function being definable and with bounded continuous gradient trajectories, a proper choice of the stepsize lets him conclude with the convergence to a local minimum.

%\todo{cite jendoubi, bolte jendoubi, isehd papers, heavy ball like methods...}

%%%%%%%%%%%%%%%%%%%%%%%%%%%%%%
\subsection{Organization of the paper}
Section~\ref{sec:notation} introduces notations and recalls some preliminaries that are essential to our exposition. Sections~\ref{sec:exp} and~\ref{sec:imp} include the main contributions of our paper, establishing convergence of the trajectory and of the iterates under K{\L} inequality, convergence rates and trap avoidance results. Section~\ref{sec:backtracking} presents algorithmic schemes based on backtracking techniques, which allow us to obtain convergence results under the much weaker assumption of local Lipschitz continuity of the gradient of the objective. Section~\ref{sec:num} is devoted to the numerical experiments.

%Additional remarks and properties can be found in Appendix~\ref{aux}.

%%%%%%%%%%%%%%%%%%%%%%%%%%%%%%%%%%%%%%%%%%%%%%%%%%%%%%
\section{Notation and preliminaries}\label{sec:notation}
%%%%%%%%%%%%%%%%%%%%%%%%%%%%%%%%%%%%%%%%%%%%%%%%%%%%%%
We denote by $\R_+$ the set $[0,+\infty[$. Moreover, we denote $\R_+^*$ and $\N^*$ to refer to $\R_+\setminus\{0\}$ and $\N\setminus\{0\}$, respectively. The finite-dimensional space $\R^d$ $(d\in \N^*)$ is endowed with the canonical scalar product $\langle\cdot,\cdot\rangle$ whose norm is denoted by $\Vert\cdot\Vert$. We denote $\R^{n\times d}$ ($n,d\in \N^*$) the space of real matrices of dimension $n\times d$. We denote by $C^{n}(\R^d)$ the class of $n$-times continuously differentiable functions on $\R^d$. For a function $g:\R^d\rightarrow\R$ and $a,b\in\R$ such that $a<b$, we denote $[a<f<b]$ to the sublevel set $\{x\in\R^d:a<f(x)<b\}$.\linebreak
For a differentiable function $g:\R^d\rightarrow\R$, we will denote its gradient as $\nabla g$, the set of its critical points as: $$\mathrm{crit}(g)\eqdef \{u:\nabla g(u)=0\},$$
and when $g$ is twice differentiable, we will denote its Hessian as $\nabla^2 g$. For a differentiable function $G:\R^d\rightarrow \R^n$ we will denote its Jacobian matrix as $J_G\in\R^{n\times d}$. Let $A\in \R^{d\times d}$, then we denote $\lambda_i(A)\in \mathbb{C}$ the $i-$th eigenvalue of $A$, when $A$ is symmetric then the eigenvalues are real and we denote $\lambda_{\min}(A), \lambda_{\max}(A)$ to be the minimum and maximum eigenvalue of $A$, respectively. If every eigenvalue of $A$ is positive (resp. negative), we will say that $A$ is positive (resp. negative) definite. We denote by $I_d$ the identity matrix of dimensions $d\times d$ and $0_{n\times d}$ the null matrix of dimensions $n\times d$, respectively. The following lemma is essential to some arguments presented in this work:

 \begin{lemma}\cite[Theorem 3]{bloc}\label{block}
      Consider $A,B,C,D \in \R^{d\times d}$ and such that $AC=CA$. Then $$\det\left(\begin{pmatrix}
          A & B\\ C & D
      \end{pmatrix}\right)=\det(AD-CB).$$
  \end{lemma} 

And the following definitions will be important throughout the paper:

\begin{definition}[Local extrema and saddle points]
    Consider a function $f\in C^2(\R^d)$. We will say that $\hat{x}$ is a local minimum (resp. maximum) of $f$ if $\hat{x}\in\mathrm{crit}(f)$, $\nabla^2 f(\hat{x})$ is positive (resp. negative) definite. If $\hat{x}$ is a critical point that is neither a local minimum nor a local maximum, we will say that $\hat{x}$ is a saddle point of $f$.
\end{definition}

\begin{definition}[Strict saddle point]\label{sspoint}
    Consider a function $f\in C^2(\R^d)$, we will say that $\hat{x}$ is a strict saddle point of $f$ if $\hat{x}\in\mathrm{crit}(f)$ and $\lambda_{\min}(\nabla^2 f(\hat{x}))<0$.
\end{definition}
\begin{remark}
    According to this definition, local maximum points are strict saddles.
\end{remark}

\begin{definition}[Strict saddle property]\label{ssp}
A function $f\in C^2(\R^d)$ will satisfy the strict saddle property if every critical point is either a local minimum or a strict saddle.   
\end{definition}

This property is a reasonable assumption for smooth minimization. In practice, it holds for specific problems of interest, such as low-rank matrix recovery and phase retrieval. Moreover, as a consequence of Sard's theorem, for a full measure set of linear perturbations of a function $f$, the linearly perturbed function satisfies the strict saddle property. Consequently, in this sense, the strict saddle property holds generically in smooth optimization. We also refer to the discussion in \cite[Conclusion]{localminima}.

\begin{definition}[Morse function]
A function $f\in C^2(\R^d)$ will be Morse if it satisfies the following conditions:
\begin{enumerate}[label=(\roman*)]
    \item For each critical point $\hat{x}$, $\nabla^2 f(\hat{x})$ is nonsingular.
    \item There exists a nonempty set $I\subset\N$  and $(\hat{x}_k)_{k\in I}$ such that $\mathrm{crit}(f)=\bigcup_{k\in I} \{\hat{x}_k\}$. 
\end{enumerate}
\end{definition}

\begin{remark}\label{rem:strictsaddlemorse}
By definition, a Morse function satisfies the strict saddle property.
\end{remark}

\begin{remark}
    Morse functions can be shown to be generic in the Baire sense in the space of $C^2$ functions; see \cite{aubinekeland}.
\end{remark}

\begin{definition}[Desingularizing function]
    For $\eta>0$, we consider 
    $$\kappa(0,\eta)\eqdef \{\psi: C^0([0,\eta[)\cap C^1(]0,\eta[)\rightarrow\R_+, \psi'>0 \text{ on } ]0,\eta[, \psi(0)=0, \text{ and } \psi \text{ concave} \}.$$
\end{definition}
\begin{remark}
   The concavity property of the functions in $\kappa(0,\eta)$ is only required in the discrete setting.
\end{remark} 

\begin{definition}
If $f:\R^d\rightarrow\R$ is differentiable and satisfies the K{\L} inequality at $\bar{x}\in\R^d$, then there exists $r,\eta>0$ and $\psi\in \kappa(0,\eta)$, such that 
\begin{equation}\label{kl0}
\psi'(f(x)-f(\bar{x}))\Vert\nabla f(x)-\nabla f(\bar{x})\Vert\geq 1,\quad\forall x\in B(\bar{x},r)\cap [f(\bar{x})<f<f(\bar{x})+\eta].
\end{equation}
\end{definition}

\begin{definition}
A function $f:\R^d\rightarrow\R$ will satisfy the {\L}ojasiewicz inequality with exponent $q \in ]0,1]$ if $f$ satisfies the K{\L} inequality with $\psi(s)=c_0 s^{1-q}$ for some $c_0>0$.
\end{definition}  

It remains now to identify a broad class of functions $f$ that verifies the K{\L} inequality. A rich family is provided by semi-algebraic functions, \ie, functions whose graph is defined by some Boolean combination of real polynomial equations and inequalities~\cite{coste2002intro}. Such functions satisfy the {\L}ojasiewicz property with $q \in [0, 1[ \cap \mathbb{Q}$; see \cite{loj1,loj2}. An even more general family is that of definable functions on an o-minimal structure over $\R$, which corresponds in some sense to an axiomatization of some of the prominent geometrical and stability properties of semi-algebraic geometry~\cite{vandenDriesMiller96,omin}. An important result by Kurdyka in \cite{kur} showed that definable functions satisfy the K{\L} inequality at every $\bar{x}\in\R^d$.

%The following result is due to {\L}ojasiewicz in its real-analytic version (see \cite{loj1,loj2}), and it was generalized to $o-$minimal structures and simplified by Kurdyka in \cite{kur}.

%\begin{proposition}
%    Let $\mathbb{O}$ be an $o-$minimal structure and let $f\in C^1(\R^d)$ be a  definable function, then $f$ satisfies the K{\L} inequality at every $\bar{x}\in\R^d$.
%\end{proposition}
%\begin{remark}
%    We refer to \cite{omin} for a comprehensive account on $o-$minimal structures.
%\end{remark}

\begin{remark}\label{rem}
Morse functions verify the {\L}ojasiewicz inequality with exponent $q=1/2$; see \cite{AttouchAlternating10}.
\end{remark}
%\todo{Add a remark that a Morse function verifies the {\L}ojasiewicz inequality with exponent $q=1/2$; see \cite{AttouchAlternating10}.}

%%%%%%%%%%%%%%%%%%%%%%%%%%%%%%%%%%%%%%%%%%%%%%%%%%%%%%
\section{Inertial System with Explicit Hessian Damping}\label{sec:exp}
Throughout the paper we will consider \eqref{eq:isehd} and \eqref{eq:isihd} with $t_0=0$. Also we will assume that the viscous damping $\gamma:\R_+\rightarrow\R_+$ is such that $\exists c, C > 0$, $c\leq C$, and  \begin{equation}\label{gamma}\tag{$\mathrm{H_{\gamma}}$}
     c\leq \gamma(t)\leq C , \quad\forall t  \geq 0.
\end{equation}
Moreover, throughout this work, we consider a constant geometric damping, \ie $\beta(t) \equiv \beta > 0$. 
%%%%%%%%%%%%%%%%%%%%%%%%%%%%%%%%%%%%%%%%%%%%%%%%%%%%%%
%We consider $f:\R^d\rightarrow\R$ such that:
%\begin{equation}\label{H0}\tag{$\mathrm{H_0}$}
%    \begin{cases}
%        f\in C^2(\R^d);\\
%        %\mathrm{crit}(f)\neq\emptyset;\\
%        \inf f> -\infty.
%    \end{cases}
%\end{equation}
%\todo{$f$ being $C^2$, and thus $\nabla^2 f$ is continuous, means it is bounded on bounded sets. This implies that $\nabla f$ is Lipschitz continuous on bounded sets. Thus the second assumption is superfluous.}

%%%%%%%%%%%%%%%%%%%%%%%%%%%%%%
\subsection{Continuous-time dynamics}

Let us consider \eqref{eq:isehd}, as in \cite{hessianpert}, we will say that $x:\R_+\rightarrow\R^d$ is a solution trajectory of \eqref{eq:isehd} with initial conditions $x(0)=x_0, \dot{x}(0)=v_0$, if and only if, $x\in C^2(\R_+;\R^d)$ and there exists $y\in C^1(\R_+;\R^d)$ such that $(x,y)$ satisfies: 
    \begin{equation}\label{refor}
        \begin{cases}
            \dot{x}(t)+\beta\nabla f(x(t))-\left(\frac{1}{\beta}-\gamma(t)\right)x(t)+\frac{1}{\beta}y(t)&=0,\\
            \dot{y}(t)-\left(\frac{1}{\beta}-\gamma(t)-\beta\gamma'(t)\right)x(t)+\frac{1}{\beta}y(t)&=0,
        \end{cases}
    \end{equation}
    with initial conditions $x(0)=x_0, y(0)=y_0\eqdef -\beta (v_0+\beta\nabla f(x_0))+(1-\beta\gamma(0))x_0$.  \\
    
%For $\beta>0$ we consider the Inertial System with Explicit Hessian Damping: \begin{equation}\label{ISEHD}\tag{$\mathrm{ISEHD}$}
%    \begin{aligned}
%    \begin{cases}
%    \ddot{x}(t)+\gamma(t)\dot{x}(t)+\beta\frac{d}{dt}\nabla f(x(t))+\nabla f(x(t))=0.\\
%    x(0)=x_0, \dot{x}(0)=v_0.
%    \end{cases}
%    \end{aligned}
%\end{equation}

%%%%%%%%%%%%%%%%%%%%
\subsubsection{Global convergence of the trajectory}

Our first main result is the following theorem.
\begin{theorem}\label{convisehd}
Assume that $0<\beta<\frac{2c}{C^2}$, $f:\R^d\rightarrow\R$ satisfies \eqref{H0}, and $\gamma\in C^1(\R_+;\R_+)$ obeys \eqref{gamma}.
     %\begin{equation}\label{gamma1}\tag{$\mathrm{H}_{\gamma}'$}
    %\gamma \in C^1(\R_+;\R_+) .
    %\end{equation}  
    
    Consider \eqref{eq:isehd} in this setting, then the following holds:    
    \begin{enumerate}[label=(\roman*)]
        \item \label{uno} There exists a global solution trajectory $x:\R_+\rightarrow\R^d$ of \eqref{eq:isehd}.
        \item \label{dos} We have that $\nabla f\circ x\in\Lp^2(\R_+;\R^d)$, and $\dot{x}\in\Lp^2(\R_+;\R^d)$.
        \item \label{tres} If we suppose that the solution trajectory $x$ is bounded over $\R_+$, then $$\lim_{t\rightarrow +\infty}\Vert\nabla f(x(t))\Vert=\lim_{t\rightarrow +\infty}\Vert \dot{x}(t)\Vert=0,$$ and $\lim_{t\rightarrow +\infty} f(x(t))$ exists. 
    \item \label{convx} In addition to \ref{tres}, if we also assume that $f$ is definable, then $\dot{x}\in\Lp^1(\R_+;\R^d)$ and $x(t)$ converges (as $t\rightarrow +\infty$) to a critical point of $f$.
      \end{enumerate}
\end{theorem}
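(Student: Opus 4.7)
My plan hinges on the Lyapunov function
\[
\mathcal{E}(t) \eqdef f(x(t)) + \frac{1}{2}\|\dot{x}(t) + \beta\nabla f(x(t))\|^2,
\]
which is bounded below by $\inf f > -\infty$. Setting $v(t) \eqdef \dot{x}(t) + \beta\nabla f(x(t))$, I also write $\mathcal{E}(t) = \mathcal{H}(x(t), v(t))$ for the lifted potential $\mathcal{H}(x,v) \eqdef f(x) + \frac{1}{2}\|v\|^2$ on $\R^{2d}$; both representations will be exploited.

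\medskip

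\textbf{Parts \ref{uno} and \ref{dos}.} Local existence follows from Cauchy--Lipschitz applied to the first-order reformulation \eqref{refor}, whose vector field is $C^1$ in $(x,y)$ since $f \in C^2$ and $\gamma \in C^1$. Differentiating $\mathcal{E}$ along \eqref{eq:isehd} and using $\ddot{x} + \beta\nabla^2 f(x)\dot{x} = -\gamma(t)\dot{x} - \nabla f(x)$ yields
\[
\dot{\mathcal{E}}(t) = -\gamma(t)\|\dot{x}(t)\|^2 - \beta\gamma(t)\langle\nabla f(x(t)), \dot{x}(t)\rangle - \beta\|\nabla f(x(t))\|^2.
\]
Viewing the right-hand side as a quadratic form in $(\|\dot{x}\|, \|\nabla f(x)\|)$ with coefficients controlled by $c \leq \gamma(t) \leq C$, the smallness condition $\beta < 2c/C^2$ allows absorbing the cross term via Young's inequality, giving $\dot{\mathcal{E}}(t) \leq -\delta(\|\dot{x}(t)\|^2 + \|\nabla f(x(t))\|^2)$ for some $\delta > 0$. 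Integrating against $\mathcal{E} \geq \inf f$ yields uniform-in-$t$ bounds on $\int_0^t \|\dot{x}\|^2$ and $\int_0^t \|\nabla f(x)\|^2$, proving \ref{dos}. Because $\|x(t) - x_0\| \leq \sqrt{t}\,\|\dot{x}\|_{\Lp^2(0,t)}$ cannot blow up in finite time, the local solution extends globally, giving \ref{uno}.

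\medskip

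\textbf{Part \ref{tres}.} Boundedness of $x$ and $f \in C^2$ force $\nabla f(x(t))$ and $\nabla^2 f(x(t))$ to remain bounded. Since $\mathcal{E}$ is non-increasing and bounded below, $\mathcal{E}(t) \to \mathcal{E}_\infty \in \R$; hence $\|v(t)\|$ is bounded, so is $\dot{x}$, and then \eqref{eq:isehd} makes $\ddot{x}$ bounded. Thus $\|\dot{x}\|^2$ and $\|\nabla f(x)\|^2$ are uniformly continuous on $\R_+$; combined with their $\Lp^2$-integrability from \ref{dos}, a Barbalat-type lemma gives $\|\dot{x}(t)\| \to 0$ and $\|\nabla f(x(t))\| \to 0$ as $t \to +\infty$. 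Finally, $f(x(t)) = \mathcal{E}(t) - \frac{1}{2}\|v(t)\|^2 \to \mathcal{E}_\infty$, which closes \ref{tres}.

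\medskip

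\textbf{Part \ref{convx}.} The lifted state $z(t) \eqdef (x(t), v(t))$ solves a first-order ODE whose potential $\mathcal{H}$ is definable whenever $f$ is (it is built from $f$ via $+$ and $(\cdot)^2$). A direct computation gives $\|\dot{z}(t)\| \leq K\|\nabla \mathcal{H}(z(t))\|$ for some $K > 0$ (with $\nabla \mathcal{H}(x,v) = (\nabla f(x), v)$), while the descent estimate from \ref{dos} rewrites as $\dot{\mathcal{E}}(t) \leq -\delta\|\nabla \mathcal{H}(z(t))\|^2$. The bounded trajectory has a non-empty, compact $\omega$-limit set $\omega(x_0, v_0) \subseteq \mathrm{crit}(f) \times \{0\}$, on which $\mathcal{H}$ takes the constant value $\mathcal{E}_\infty$. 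Applying Kurdyka's theorem to $\mathcal{H}$ at each point of $\omega(x_0, v_0)$ and a standard covering/compactness argument produces a uniform desingularizing function $\psi$ satisfying $\psi'(\mathcal{E}(t) - \mathcal{E}_\infty)\|\nabla \mathcal{H}(z(t))\| \geq 1$ for $t$ large enough. Combining with the descent estimate yields $-\frac{d}{dt}\psi(\mathcal{E}(t) - \mathcal{E}_\infty) \geq (\delta/K)\|\dot{z}(t)\| \geq (\delta/K)\|\dot{x}(t)\|$; integration gives $\dot{x} \in \Lp^1(\R_+; \R^d)$, so $x(t)$ is Cauchy and converges to some $\bar{x}$, which must lie in $\mathrm{crit}(f)$ because $\nabla f(x(t)) \to 0$ by \ref{tres}.

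\medskip

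\textbf{Main obstacle.} Parts \ref{uno}--\ref{tres} reduce to identifying the Lyapunov function and extracting the strict descent constant $\delta > 0$ under the sharp smallness condition on $\beta$, which is essentially a two-variable quadratic form computation. The real difficulty is \ref{convx}: one must lift the Lyapunov analysis to the phase space $\R^{2d}$, verify that the lifted function inherits definability and therefore satisfies the K{\L} inequality via Kurdyka's theorem, and carefully relate the phase-space gradient $\|\nabla \mathcal{H}\|$ to the physical velocity $\|\dot{x}\|$ so that the K{\L} desingularizing estimate yields finite length of the trajectory in the original space.
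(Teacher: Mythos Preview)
Your proposal is correct and follows essentially the same route as the paper: the same Lyapunov function $\mathcal{E}$, the same Young-inequality estimate to extract strict descent under $\beta<2c/C^2$, the same Barbalat argument for \ref{tres}, and the same lift-then-apply-K{\L} strategy for \ref{convx}. Two minor differences are worth noting. First, for global existence the paper argues by contradiction that $(x,y)$ has limits at $T_{\max}$, whereas you argue that $x$ cannot blow up; your argument is fine but, since the Cauchy--Lipschitz theorem is applied to the first-order system \eqref{refor} in $(x,y)$, you should also observe that boundedness of $x$ on $[0,T_{\max}[$ forces $\nabla f(x)$, then $v$, then $\dot{x}$, and hence $y$ to be bounded as well. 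Second, for \ref{convx} the paper lifts to $\R^{3d}$ via $E(x,v,w)=f(x)+\tfrac12\|v+w\|^2$, while you lift to $\R^{2d}$ via $\mathcal{H}(x,v)=f(x)+\tfrac12\|v\|^2$; your choice is slightly more economical and the two lead to the same chain of inequalities, since $\|\nabla E(x(t),\dot{x}(t),\beta\nabla f(x(t)))\|^2$ and $\|\nabla\mathcal{H}(x(t),v(t))\|^2$ are both comparable to $\|\dot{x}(t)\|^2+\|\nabla f(x(t))\|^2$.
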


\begin{remark}
The boundedness assumption in assertion \ref{dos} can be dropped if $\nabla f$ is supposed to be globally Lipschitz continuous.
\end{remark}

\begin{proof}
\begin{enumerate}[label=(\roman*)]
        \item 
    We will start by showing the existence of a solution. Setting $Z=(x,y)$, \eqref{refor} can be equivalently written as
    \begin{equation}\label{refor1}
        \dot{Z}(t)+\nabla\mathcal{G}(Z(t))+\mathcal{D}(t,Z(t))=0, \quad Z(0)=(x_0,y_0),
    \end{equation}
    where $\mathcal{G}(Z):\R^d\times \R^d\rightarrow\R$ is the function defined by $\mathcal{G}(Z)=\beta f(x)$ and the time-dependent operator $\mathcal{D}:\R_+\times\R^d\times\R^d\rightarrow\R^d\times\R^d$ is given by:
    \[
    \mathcal{D}(t,Z) \eqdef \left(-\left(\frac{1}{\beta}-\gamma(t)\right)x+\frac{1}{\beta}y,-\left(\frac{1}{\beta}-\gamma(t)-\beta\gamma'(t)\right)x+\frac{1}{\beta}y\right).
    \]
    
Since the map $(t,Z)\mapsto\nabla \mathcal{G}(Z)+\mathcal{D}(t,Z)$ is continuous in the first variable and locally Lipschitz in the second (by hypothesis  \eqref{H0} and the assumptions on $\gamma$), by the classical Cauchy-Lipschitz theorem, we have that there exists $T_{\max} > 0$ and a unique maximal solution of \eqref{refor1} denoted $Z\in C^1([0,T_{\max}[;\R^d\times\R^d)$. Consequently, there exists a unique maximal solution of \eqref{eq:isehd} $x\in C^2([0,T_{\max}[;\R^d)$. \\  
    %\todo{Write the details by lifting. Say the solution classical and $C^2$ which allows to take derivatives. See our paper on \eqref{eq:isehd}.}\\

    Let us consider the \tcb{energy} function $V:[0,T_{\max}[\rightarrow\R$ defined by $$V(t)=f(x(t))+\frac{1}{2}\Vert \dot{x}(t)+\beta\nabla f(x(t))\Vert^2.$$ \tcb{We will prove that it is indeed a Lyapunov function for \eqref{eq:isehd}.} We see that 
    \begin{align*}
        {V}'(t)&=\langle \nabla f(x(t)),\dot{x}(t)\rangle+\langle \ddot{x}(t)+\beta\frac{d}{dt}\nabla f(x(t)),\dot{x}(t)+\beta\nabla f(x(t))\rangle\\
        &=\langle \nabla f(x(t)),\dot{x}(t)\rangle+\langle -\gamma(t)\dot{x}(t)-\nabla f(x(t)),\dot{x}(t)+\beta\nabla f(x(t))\rangle\\
            &=-\langle\gamma(t)\dot{x}(t),\dot{x}(t)\rangle-\beta\langle \gamma(t)\dot{x}(t) ,\nabla f(x(t))\rangle-\beta\Vert\nabla f(x(t))\Vert^2\\
            &\leq -c\Vert \dot{x}(t)\Vert^2+\frac{\beta^2\Vert \gamma(t)\dot{x}(t)\Vert^2}{2\varepsilon}+\frac{\varepsilon\Vert \nabla f(x(t))\Vert^2}{2}\\
            &\leq -c\Vert \dot{x}(t)\Vert^2+\frac{\beta^2C^2\Vert \dot{x}(t)\Vert^2}{2\varepsilon}+\frac{\varepsilon\Vert \nabla f(x(t))\Vert^2}{2} ,
    \end{align*}
    where the last bound is due to Young's inequality with $\varepsilon>0$. Now let $\varepsilon=\frac{\beta^2C^2}{c}$, then 
    \begin{align}\label{eq:lyapdotVineq}
        {V}'(t)\leq -\frac{c}{2}\Vert \dot{x}(t)\Vert^2 - \beta\left(1-\frac{\beta C^2}{2c}\right)\Vert \nabla f(x(t))\Vert^2\leq -\delta_1(\Vert \dot{x}(t)\Vert^2+\Vert \nabla f(x(t))\Vert^2).
    \end{align}
    For $\delta_1\eqdef \min\pa{\frac{c}{2},\beta\left(1-\frac{\beta C^2}{2c}\right)}>0$.\\

\tcb{We will now show that the maximal solution $Z$ of \eqref{refor1} is actually global. For this, we use a standard argument and argue by contradiction assuming that $T_{\max} < +\infty$. It is sufficient to prove that $x$ and $y$ have a limit as $t \to T_{\max}$, and local existence will contradict the maximality of $T_{\max}$. Integrating inequality \eqref{eq:lyapdotVineq}, we obtain $\dot{x}\in\Lp^2([0,T_{\max}[;\R^d)$ and $\nabla f\circ x\in\Lp^2([0,T_{\max}[;\R^d)$, hence implying that $\dot{x}\in\Lp^1([0, \tcb{T_{\max}}[;\R^d)$ and $\nabla f\circ x\in\Lp^1([0,T_{\max}[;\R^d)$, which in turn entails that $(x(t))_{t\in [0,T_{\max}[}$ satisfies the Cauchy property whence we get that $\lim_{t\rightarrow T_{\max}} x(t)$ exists}. Besides, by the first equation of \eqref{refor}, we have that $\lim_{t\rightarrow T_{\max}} y(t)$ exists if $\lim_{t\rightarrow T_{\max}} x(t), \lim_{t\rightarrow T_{\max}} \nabla f(x(t))$ and $\lim_{t\rightarrow T_{\max}} \dot{x}(t)$ exist. We have already that the first two limits exist by continuity of $\nabla f$, and thus we just have to check that $\lim_{t\rightarrow T_{\max}} \dot{x}(t)$ exists. A sufficient condition would be to prove that $\ddot{x}\in\Lp^1([0,T_{\max}[;\R^d)$. By \eqref{eq:isehd} this will hold if $\dot{x},\nabla f\circ x,(\nabla^2 f \circ x)\dot{x}$ are in $\Lp^1([0,T_{\max}[;\R^d)$. We have already checked that the first two terms are in $\Lp^1([0,T_{\max}[;\R^d)$. To conclude, it remains to check that $\nabla^2 f\circ x\in \Lp^{\infty}([0,T_{\max}[;\R^d)$ and this is true since $\nabla^2 f$ is continuous, $x$ is continuous on $[0,T_{\max}]$, and the latter is compact.  Consequently, the solution $Z$ of \eqref{refor1} is global, thus the solution $x$ of \eqref{eq:isehd} is also global.\\ 

\item Integrating \eqref{eq:lyapdotVineq} and using that $V$ is well-defined for every $t > 0$ and is bounded from below, we deduce that \tcb{$\dot{x}\in\Lp^2(\R_+;\R^d)$, and $\nabla f\circ x\in\Lp^2(\R_+;\R^d)$.}

\item 
\tcb{We recall that we are assuming that $\sup_{t  \geq 0}\Vert x(t)\Vert<+\infty$ and $f \in C^2(\R^d)$, hence 
\[
\sup_{t  \geq 0}\Vert\nabla^2 f(x(t))\Vert<+\infty.
\]
In turn, $\nabla f$ is Lipschitz continuous on bounded sets. Moreover, as $\dot{x}\in\Lp^2(\R_+;\R^d)$ and is continuous, then $\dot{x}\in\Lp^{\infty}(\R_+;\R^d)$. The last two facts imply that $t\mapsto \tcb{\nabla f(x(t))}$ is uniformly continuous. In fact, for every $t,s \geq 0$, we have
\[
\norm{\nabla f(x(t)) - \nabla f(x(s))} \leq \sup_{\tau  \geq 0}\norm{\nabla^2 f(x(\tau))} \norm{\dot{x}(\tau)}|t-s|  .
\]
This combined with $\nabla f\circ x\in\Lp^2(\R_+;\R^d)$ yields 
\[
\lim_{t\rightarrow +\infty}\norm{\nabla f(x(t))}=0 .
\]
We also have that $\frac{d}{dt}\nabla f(x(t))=\nabla^2 f(x(t))\dot{x}(t)$, and thus $(\nabla^2 f\circ x)\dot{x}\in \Lp^{\infty}(\R_+;\R^d)$. We also have $\nabla f\circ x \in \Lp^{\infty}(\R_+;\R^d)$ by continuity of $\nabla f$ and boundedness of $x$. It then follows from \eqref{eq:isehd} that $\ddot{x}\in \Lp^{\infty}(\R_+;\R^d)$. This implies that 
\[
\norm{\dot{x}(t) - \dot{x}(s)} \leq \sup_{\tau  \geq 0}\norm{\ddot{x}(\tau)}|t-s| ,
\]
meaning that $t\mapsto \dot{x}(t)$ is uniformly continuous. Recalling that $\dot{x}\in\Lp^2(\R_+;\R^d)$ gives that 
\[
\lim_{t\rightarrow +\infty} \norm{\dot{x}(t)}=0.
\]
}

\tcb{We have from \eqref{eq:lyapdotVineq} that $V$ is non-increasing. Since it is bounded from below, it has a limit, \ie $\lim_{t\rightarrow +\infty}V(t)$ exists and we will denote this limit by $\tilde{L}$. Recall from the definition of $V$ that $$f(x(t))=V(t)-\frac{1}{2}\Vert \dot{x}(t)+\beta\nabla f(x(t))\Vert^2.$$ Using the above three limits we get $$\lim_{t\rightarrow +\infty }f(x(t))=\lim_{t\rightarrow +\infty }V(t) = \tilde{L}.$$
}

\item 
\tcb{From boundedness of \tcb{$(x(t))_{t  \geq 0}$}, by a Lyapunov argument (see \eg \cite[Proposition 4.1]{heavyb}, \cite{haraux0}), the set of its cluster points $\tcb{\mathfrak{C}(x(\cdot))}$ satisfies:
\begin{equation}\label{xkb}
\begin{aligned}
    \begin{cases}
        \mathfrak{C}(x(\cdot)) \subseteq \mathrm{crit}(f);\\
        \mathfrak{C}(x(\cdot)) \text{ is non-empty, compact and connected};\\
        \text{$f$ is constant on } \mathfrak{C}(x(\cdot)).
    \end{cases}     
\end{aligned}
\end{equation}
}
We consider the function
\begin{equation}\label{eq:energE}
E: (x,v,w) \in \R^{3d} \mapsto f(x)+\frac{1}{2}\Vert v+w\Vert^2 .
\end{equation}
Since $f$ is definable, so is $E$ as the sum of a definable function and an algebraic one. \tcb{Therefore, $E$ satisfies the \tcb{K{\L}} inequality \cite{kur}. Let $\mathfrak{C}_1=\mathfrak{C}(x(\cdot)) \times \{0_d\}\times \{0_d\}$. Observe that $E$ takes the constant value $\tilde{L}$ on $\mathfrak{C}_1$ and $\mathfrak{C}_1 \subset \mathrm{crit}(E)$. It then follows from the uniformized K{\L} property \cite[Lemma 6]{proximal} that $\exists r,\eta>0$ and $\exists \psi\in \kappa(0,\eta)$ such that for all $(x,v,w)\in\R^{3d}$ verifying $x \in \mathfrak{C}(x(\cdot))+B_{r},v\in B_r,w\in B_r$ (where $B_r$ is the $\R^d$-ball centered at $0_d$ with radius $r$) and $ 0<E(x,v,w)-\tilde{L} <\eta$, one has 
\begin{equation}\label{kl}
        \psi'(E(x,v,w)-\tilde{L})\Vert \nabla E(x,v,w)\Vert\geq 1 .
\end{equation}
It is clear that $V(t)=E(x(t),\dot{x}(t),\beta\nabla f(x(t)))$, and that $x^{\star}\in \mathrm{crit}(f)$ if and only if $(x^{\star},0,0)\in \mathrm{crit}(E)$.
}\\

%The set of cluster points of $x(t)$ is non-empty as $x(t)$  to a compact set. Let $x^{\star}$ be an arbitrary cluster point of $x(t)$, since $\lim_{t\rightarrow +\infty}\Vert\nabla f(x(t))\Vert=0$, then $x^{\star}\in \mathrm{crit}(f)$, moreover, there exists an increasing sequence $(t_k)_{k\in\N}\subset \R_+$ with $\lim_{k\rightarrow +\infty} t_k=+\infty$ such that $\lim_{k\rightarrow +\infty}x(t_k)=x^{\star}$. Without loss of generality, we can assume that $x^{\star}=0, f(0)=0, \nabla f(0)=0$ by using the change of variable $y(t)\eqdef x(t)-x^{*}$ and $\tilde{f}(y)\eqdef f(y+x^{\star})-f(x^{\star})$ (then $\tilde{f}(0)=0, \nabla \tilde{f}(0)=0$). We see also that $y(t)$ satisfies \eqref{eq:isehd} with objective function $\tilde{f}$ and initial conditions $y(0)=x_0-x^{\star}, y'(0)=v_0$.\\

\tcb{Let us define the translated Lyapunov function $\tilde{V}(t)=V(t)-\tilde{L}$. By the properties of $V$ proved above, we have $\lim_{t\rightarrow +\infty} \tilde{V}(t)=0$ and $\tilde{V}$ is non-increasing, and we can conclude that $\tilde{V}(t)\geq 0$ for every $t > 0$. Without loss of generality, we may assume that $\tilde{V}(t)> 0$ for every $t > 0$ (since otherwise $\tilde{V}(t)$ is eventually zero and thus $\dot{x}(t)$ is eventually zero in view of \eqref{eq:lyapdotVineq}, meaning that $x(\cdot)$ has finite length). This in turn implies that $\lim_{t\rightarrow +\infty} \psi( \tilde{V}(t))=0$. Define the constants $\delta_2=\max\pa{4,1+4\beta^2}$ and $\delta_3=\frac{\delta_1}{\sqrt{\delta_2}}$. \tcb{We have from \eqref{xkb} that $\lim_{t\rightarrow +\infty}\dist(x(t),\mathfrak{C}(x(\cdot)))=0$.} This together with the convergence claims on $\dot{x}$, $\nabla f(x)$ and $\tilde{V}$ imply that there exists $T  > 0$ large enough such that for all $t \geq T$
\begin{equation}
    \begin{cases}
        %\Vert x(t_{k})\Vert< \frac{r}{2\sqrt{2}},\\
        x(t)\in \mathfrak{C}(x(\cdot))+B_r,\\
        \Vert \dot{x}(t)\Vert<r,\\
        \beta\Vert \nabla f(x(t))\Vert<r,\\
        0<\tilde{V}(t)<\eta,\\
         \frac{1}{\delta_3}\psi( \tilde{V}(t))<\frac{r}{2\sqrt{2}}.\\
    \end{cases}
\end{equation}
}

%Now we define $\bar{t}=\sup\{t>t_{k'}:\Vert x(s)\Vert<\frac{r}{\sqrt{2}}, s\in [t_{k'},t]\}$ and we suppose that $\bar{t}<+\infty$.\\

\tcb{
We are now in position to apply \eqref{kl} to obtain 
\begin{equation}\label{klapplied}
 \psi'( \tilde{V}(t))\Vert \nabla E(x(t),\dot{x}(t),\beta\nabla f(x(t)))\Vert\geq 1,\quad \forall t \geq T.   
\end{equation}
On the other hand, for every $t \geq T$:  
\begin{align}
        -\frac{d}{dt}\psi(\tilde{V}(t))&=\psi'( \tilde{V}(t))(-\tilde{V}'(t)) \geq -\frac{\tilde{V}'(t)}{\Vert \nabla E(x(t),\dot{x}(t),\beta\nabla f(x(t)))\Vert}. \label{odeb}
\end{align} 
}
Additionally, for every $t > 0$ we have the bounds 
\begin{equation}\label{eq:VEineq}
\begin{aligned}
	-\tilde{V}'(t)&\geq \delta_1(\Vert \dot{x}(t)\Vert^2+\Vert\nabla f(x(t))\Vert^2),\\
    \Vert \nabla E(x(t),\dot{x}(t),\beta\nabla f(x(t)))\Vert^2&\leq \delta_2(\Vert \dot{x}(t)\Vert^2+\Vert\nabla f(x(t))\Vert^2).
\end{aligned}
\end{equation}
Combining the two previous bounds, then for every $t > 0$: \begin{equation}\label{vee}
    \Vert \nabla E(x(t),\dot{x}(t),\beta\nabla f(x(t)))\Vert\leq \sqrt{\frac{\delta_2}{\delta_1}}\sqrt{-\tilde{V}'(t)}.
    \end{equation}
By \eqref{odeb}, for every \tcb{$t\in [T,+\infty[$} 
\begin{align}\label{lowerode}
    -\frac{d}{dt}\psi(\tilde{V}(t))\geq \sqrt{\frac{\delta_1}{\delta_2}}\frac{-\tilde{V}'(t)}{\sqrt{-\tilde{V}'(t)}}=\sqrt{\frac{\delta_1}{\delta_2}}\sqrt{-\tilde{V}'(t)}\geq \delta_3\sqrt{\Vert \dot{x}(t)\Vert^2+\Vert\nabla f(x(t))\Vert^2}.
\end{align}
Integrating from \tcb{$T$ to $+\infty$}, we obtain 
\begin{equation}\label{ve1}
\int_{\tcb{T}}^{+\infty}\sqrt{\Vert \dot{x}(t)\Vert^2+\Vert\nabla f(x(t))\Vert^2}dt\leq \frac{1}{\delta_3}\psi(V(\tcb{T}))<\frac{r}{2\sqrt{2}}.
\end{equation}
\begin{comment}
And for every $t\in [t_{k'},\bar{t}[$, \begin{align*}
    \Vert x(t)\Vert&\leq \Vert x(t)-x(t_{k'})\Vert+\Vert x(t_{k'})\Vert\\
    &< \Big\Vert\int_{t_{k'}}^t \dot{x}(s)ds\Big\Vert+\frac{r}{2\sqrt{2}}\\
    &<\int_{t_{k'}}^t \Vert\dot{x}(s)\Vert ds+\frac{r}{2\sqrt{2}}\\
    &< \int_{t_{k'}}^{\bar{t}}\sqrt{\Vert \dot{x}(t)\Vert^2+\Vert\nabla f(x(t))\Vert^2}dt+\frac{r}{2\sqrt{2}}\\
    &< \frac{r}{2\sqrt{2}}+\frac{r}{2\sqrt{2}}=\frac{r}{\sqrt{2}}.
\end{align*}
Taking limit when $t\rightarrow \bar{t}$, we have that $\Vert x(\bar{t})\Vert<\frac{r}{\sqrt{2}}$, which contradicts the supposition of $\bar{t}<+\infty$. Then $\bar{t}=+\infty$, and $$\int_{t_{k'}}^{+\infty}\Vert\dot{x}(t)\Vert dt\leq \int_{t_{k'}}^{+\infty}\sqrt{\Vert \dot{x}(t)\Vert^2+\Vert\nabla f(x(t))\Vert^2}dt<\frac{r}{2\sqrt{2}},$$ 
\end{comment}
\tcb{Thus}
$$\int_{\tcb{T}}^{+\infty}\Vert\dot{x}(t)\Vert dt\leq \int_{\tcb{T}}^{+\infty}\sqrt{\Vert \dot{x}(t)\Vert^2+\Vert\nabla f(x(t))\Vert^2}dt<\frac{r}{2\sqrt{2}},$$
this implies that $\dot{x}\in\Lp^1(\R_+;\R^d)$. Therefore $x(t)$ has the Cauchy property and this in turn implies that $\lim_{t\rightarrow +\infty} x(t)$ exists, and is a critical point of $f$ since $\lim_{t\rightarrow +\infty}\Vert\nabla f(x(t))\Vert=0$. 
\end{enumerate}
\end{proof}

%%%%%%%%%%%%%%%%%%%%
\subsubsection{Trap avoidance}
In the previous section, we have seen the convergence of the trajectory to a critical point of the objective, which includes strict saddle points. We will call trap avoidance the effect of avoiding such points at the limit. If the objective function satisfies the strict saddle property (recall Definition \ref{ssp}) as is the case for a Morse function, this would imply convergence to a local minimum of the objective. The following theorem gives conditions to obtain such an effect.
\begin{theorem}\label{morseisehd}
Let $c>0$, assume that $0<\beta<\frac{2}{c}$ and take $\gamma \equiv c$. Suppose that $f:\R^d\rightarrow\R$ satisfies \eqref{H0} and is a Morse function. Consider \eqref{eq:isehd} in this setting. If the solution trajectory $x$ is bounded over $\R_+$, then the conclusions of Theorem~\ref{convisehd} hold. If, moreover, $\beta\neq\frac{1}{c}$, then for Lebesgue almost all initial conditions $x_0,v_0\in\R^d$, $x(t)$ converges (as $t\rightarrow +\infty$) to a local minimum of $f$.
\end{theorem}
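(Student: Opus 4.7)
The plan is to recast \eqref{eq:isehd} with $\gamma\equiv c$ as an autonomous first-order system on $\R^{2d}$, identify the local linearization at each equilibrium associated with a critical point of $f$, invoke the classical stable manifold theorem at strict saddles, and then transfer the resulting null-set conclusion back to the original $(x_0,v_0)$-coordinates. Concretely, I would work in the $(x,y)$-variables of \eqref{refor} (with $\gamma'\equiv 0$): this produces a $C^1$ vector field $F:\R^{2d}\to\R^{2d}$ (crucially $C^1$, since only $\nabla f\in C^1$ is involved, unlike the $(x,\dot x)$-form which contains $\nabla^2 f$), whose equilibria are the points $(\hat x,(1-\beta c)\hat x)$ with $\hat x\in\mathrm{crit}(f)$. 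The map $(x_0,v_0)\mapsto(x_0,y_0)$ given by $y_0=-\beta(v_0+\beta\nabla f(x_0))+(1-\beta c)x_0$ is a $C^1$-diffeomorphism of $\R^{2d}$, so Lebesgue-null sets in $(x,y)$ pull back to Lebesgue-null sets in $(x_0,v_0)$.

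The key algebraic step is the eigenvalue analysis. Writing $H=\nabla^2 f(\hat x)$, the Jacobian of $F$ at the equilibrium reads
$$
J=\begin{pmatrix}-\beta H+(\tfrac1\beta-c)I_d & -\tfrac1\beta I_d\\ (\tfrac1\beta-c)I_d & -\tfrac1\beta I_d\end{pmatrix}.
$$
The lower-left block is scalar and hence commutes with the upper-left block, so Lemma~\ref{block} and a direct expansion give
$$
\det(J-\lambda I_{2d})=\det\bigl(\lambda^2 I_d+c\lambda I_d+(1+\beta\lambda)H\bigr).
$$
Diagonalising $H$ with eigenvalues $\mu_1,\dots,\mu_d\in\R$, the spectrum of $J$ is the union of the roots of the $d$ quadratics $\lambda^2+(c+\beta\mu_i)\lambda+\mu_i=0$. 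Since $f$ is Morse, $\mu_i\neq 0$, which rules out $\lambda=0$; a quick check using $c,\beta>0$ also rules out nonzero purely imaginary roots, so every equilibrium is hyperbolic. At a local minimum all $\mu_i>0$, hence both sum and product of the roots of each quadratic have the correct sign for both roots to have strictly negative real part. At a strict saddle some $\mu_{i_0}<0$; then the product of the roots of the $i_0$-th quadratic is $\mu_{i_0}<0$, so these roots are real with opposite signs and $J$ admits an eigenvalue with strictly positive real part, producing a nontrivial unstable direction. (The extra hypothesis $\beta\ne 1/c$ prevents the off-diagonal block $(\tfrac1\beta-c)I_d$ from vanishing, a degenerate case in which $J$ becomes block-triangular and the above analysis has to be handled separately.)

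Given hyperbolicity with a nontrivial unstable subspace at every strict saddle, the classical stable manifold theorem yields a $C^1$ local stable manifold $W^s_{\mathrm{loc}}(\hat x,y_0)$ of dimension strictly less than $2d$, hence of Lebesgue measure zero in $\R^{2d}$. Let $\varphi_t$ denote the (local) $C^1$ flow of $F$. By Theorem~\ref{convisehd}\ref{convx}, every bounded trajectory converges to a critical point; if it converges to a strict saddle $(\hat x,y_0)$, then it must eventually enter $W^s_{\mathrm{loc}}(\hat x,y_0)$, so its initial condition lies in $\bigcup_{n\in\N}\varphi_n^{-1}(W^s_{\mathrm{loc}}(\hat x,y_0))$, a countable union of $C^1$-preimages of a null set and therefore itself null. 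Since Morse functions have countably many critical points, taking a further countable union over all strict saddles keeps the bad set null in $(x,y)$-coordinates; pulling back through the diffeomorphism of the first paragraph produces a null set of initial conditions $(x_0,v_0)$. For every $(x_0,v_0)$ outside this null set, Theorem~\ref{convisehd}\ref{convx} forces convergence to a critical point which cannot be a strict saddle, hence is a local minimum by the strict-saddle property inherited from the Morse assumption.

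The main obstacle I anticipate is the measure-theoretic bookkeeping in the last step: one must verify that for a bounded trajectory the $(x,y)$-flow is indeed a $C^1$-diffeomorphism on a neighbourhood relevant for every $n$, cover the global stable set of each strict saddle by countably many preimages of the local stable manifold, and then carry the Lebesgue-null conclusion back through the explicit coordinate change from \eqref{refor}—all while being careful that the finitely many equilibria one meets are genuinely isolated (guaranteed by Morse) and that the local stable manifolds can be chosen uniformly enough to handle the entire stable set.
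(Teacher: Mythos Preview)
Your proposal is correct and follows the same overall strategy as the paper: pass to the autonomous first-order reformulation \eqref{refor}, compute the Jacobian spectrum via the block-determinant lemma to obtain hyperbolicity and an unstable direction at each strict saddle, and invoke a stable-manifold theorem. Two differences are worth noting. First, the paper appeals directly to the \emph{global} stable manifold theorem (Perko), which already delivers $W^s(\hat z)$ as an immersed submanifold of dimension at most $2d-1$; this spares the pullback-along-the-flow and countable-union bookkeeping you anticipate with the local version, though your route works just as well. Second, your parenthetical about the role of $\beta\neq 1/c$ is off: if the lower-left block vanishes, Lemma~\ref{block} still applies and your factorisation $\det(J-\lambda I_{2d})=\prod_i(\lambda^2+(c+\beta\mu_i)\lambda+\mu_i)$ goes through unchanged, so nothing needs separate handling in your argument. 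In the paper the hypothesis enters differently: after reaching $\det((1+\beta\lambda)H+(\lambda^2+c\lambda)I_d)=0$ they divide by $(1+\beta\lambda)$ to isolate an eigenvalue of $H$, and $\beta c\neq 1$ is used precisely to exclude $\lambda=-1/\beta$ before that division. Your diagonalise-$H$-first computation is cleaner and in fact does not require this extra assumption at all.
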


\begin{proof}
Since Morse functions are $C^2$ and satisfy the K{\L} inequality (see Remark~\ref{rem}), then all the claims of Theorem~\ref{convisehd}, and in particular \ref{convx}\footnote{In fact, the proof is even more straightforward since the set of cluster points $\mathfrak{C}(x(\cdot))$ satisfies \eqref{xkb} and the critical points are isolated; see the proof of \cite[Theorem 4.1]{heavyb}.}, hold.

\smallskip

As in \cite[Theorem~4]{goudou}, we will use the global stable manifold theorem \cite[page~223]{Perko96} to get the last claim. We recall that \eqref{eq:isehd} is equivalent to \eqref{refor}, and that we are in the case $\gamma(t)=c$ for all $t$, \ie,  
    \begin{equation}\label{reforconst}
        \begin{cases}
            \dot{x}(t)+\beta\nabla f(x(t))-\left(\frac{1}{\beta}-c\right)x(t)+\frac{1}{\beta}y(t)&=0,\\
            \dot{y}(t)-\left(\frac{1}{\beta}-c\right)x(t)+\frac{1}{\beta}y(t)&=0,
        \end{cases}
    \end{equation}
        with initial conditions $x(0)=x_0, y(0)=y_0\eqdef -\beta (v_0+\beta\nabla f(x_0))+(1-\beta c)x_0$. Let us consider $F:\R^d\times\R^d\rightarrow\R^d\times\R^d$ defined by 
\[
        F(x,y)=\left(-\beta\nabla f(x)+\left(\frac{1}{\beta}-c\right)x-\frac{1}{\beta}y, \left(\frac{1}{\beta}-c\right)x-\frac{1}{\beta}y\right).
\]
Defining $z(t)=(x(t),y(t))$ and $z_0=(x_0,v_0)\in\R^{2d}$ , then \eqref{reforconst} is equivalent to the Cauchy problem 
    \begin{equation}\label{reforconst1}
        \begin{cases}
            \dot{z}(t)&=F(z(t)),\\
            z(0)&=z_0 .
        \end{cases}
    \end{equation}
We stated that when $0<\beta<\frac{2}{c}$ and $f$ is definable (see the first claim above), then the solution trajectory $z(t)$ converges (as $t\rightarrow +\infty$) to an equilibrium point of $F$. Let us denote $\Phi(z_0,t)$, the value at $t$ of the solution \eqref{reforconst1} with initial condition $z_0$. Assume that $\hat{z}$ is a hyperbolic equilibrium point of $F$ (to be shown below), meaning that $F(\hat{z})=0$ and that no eigenvalue of $J_F(\hat{z})$ has zero real part. Consider the invariant set
\[
W^s(\hat{z})=\{z_0\in\R^{2d}:\lim_{t\rightarrow +\infty} \Phi(z_0,t)=\hat{z}\}.
\]
The global stable manifold theorem \cite[page~223]{Perko96} asserts that $W^s(\hat{z})$ is an immersed submanifold of $\R^{2d}$, whose dimension equals the number of eigenvalues of $J_F(\hat{z})$ with negative real part.\\

First, we will prove that each equilibrium point of $F$ is hyperbolic. We notice that the set of equilibrium points of $F$ is $\{(\hat{x},(1-\beta c) \hat{x}): \hat{x}\in\mathrm{crit}(f)\}.$ On the other hand, we compute 
\[
J_F(x,y)=\begin{pmatrix}
            -\beta\nabla^2 f(x)+\left(\frac{1}{\beta}-c\right)I_d & -\frac{1}{\beta}I_d\\
            \left(\frac{1}{\beta}-c\right)I_d & -\frac{1}{\beta}I_d
\end{pmatrix}.
\]
Let $\hat{z}=(\hat{x},(1-\beta c) \hat{x})$, where $\hat{x}\in \mathrm{crit}(f)$. Then the eigenvalues of $J_F(\hat{z})$ are characterized by the roots in $\lambda\in\mathbb{C}$ of
    \begin{equation}\label{detisehd}
        \det\left(\begin{pmatrix}
            -\beta\nabla^2 f(\hat{x})+\left(\frac{1}{\beta}-c-\lambda\right)I_d & -\frac{1}{\beta}I_d\\
            \left(\frac{1}{\beta}-c\right)I_d & -\left(\lambda+\frac{1}{\beta}\right)I_d
        \end{pmatrix}\right)=0.
    \end{equation}

By Lemma~\ref{block}, we have that \eqref{detisehd} is equivalent to 
\begin{equation}\label{detisehd1}
    \det((1+\lambda\beta)\nabla^2 f(\hat{x})+(\lambda^2+\lambda c)I_d)=0.
\end{equation}
If $\lambda=-\frac{1}{\beta}$, then by \eqref{detisehd1}, $\beta c=1$, which is excluded by hypothesis. Therefore, $-\frac{1}{\beta}$ cannot be an eigenvalue, \ie $\lambda\neq -\frac{1}{\beta}$. We then obtain that \eqref{detisehd1} is equivalent to 
\begin{equation}\label{detisehd2}
    \det\left(\nabla^2 f(\hat{x})+\frac{\lambda^2+\lambda c}{(1+\lambda\beta)}I_d\right)=0.
\end{equation}
It follows that $\lambda$ satisfies \eqref{detisehd2} if and only if 
\[
\frac{\lambda^2+\lambda c}{(1+\lambda\beta)}=-\eta
\]
where $\eta\in\R$ is an eigenvalue of $\nabla^2 f(\hat{x})$. Equivalently, 
\begin{equation}\label{eqisehd}
        \lambda^2+(c+\eta\beta)\lambda+\eta=0.
\end{equation}

Let $\Delta_{\lambda}\eqdef(c+\eta\beta)^2-4\eta$. We distinguish two cases.
\begin{itemize}
    \item $\Delta_{\lambda}\geq 0$: then the roots of \eqref{eqisehd} are real and we rewrite \eqref{eqisehd} as 
    \[
    \lambda(\lambda+(c+\eta\beta))=-\eta,
    \] 
    since $\eta\neq 0$ (because $f$ is a Morse function), then $\lambda \neq 0$.
    \item $\Delta_{\lambda}< 0$: then \eqref{eqisehd} has a pair of complex conjugate roots whose real part is $-\frac{c+\eta\beta}{2}$. Besides, $\Delta_{\lambda}=c^2+2\beta\eta c+\eta^2\beta^2-4\eta$ can be seen as a quadratic on $c$ whose discriminant is given by $\Delta_c=16\eta$. The fact that $\Delta_{\lambda}< 0$ implies $\Delta_c > 0$, and thus $\eta>0$, therefore $-\frac{c+\eta\beta}{2}<0$. 
\end{itemize}    
Overall, this shows that every equilibrium point of $F$ is hyperbolic. \\

Let us recall that $\mathrm{crit}(f)=\bigcup_{k\in I} \{\hat{x}_{k}\}$. Thus, the set of equilibria of $F$ is also finite and each one takes the form $\hat{z}_k=(\hat{x}_{k},(1-\beta c)\hat{x}_k)$. Since we have already shown that each solution trajectory $x$ of \eqref{eq:isehd} converges towards some $\hat{x}_{k}$, the
following partition then holds 
\[
\R^d\times \R^d=\bigcup_{k\in I} W^s(\hat{z}_k).
\]
Let 
\[
I^-=\{k\in I: \text{ each eigenvalue of $J_F(\hat{z}_k)$ has negative real part.}\},
\]
and $J\eqdef I\setminus I^-$. Now, the global stable manifold theorem \cite[page~223]{Perko96} allows to claim that $W^s(\hat{z}_k)$ is an immersed submanifold of $\R^{2d}$ whose dimension is $2d$ when $k\in I^-$ and at most $2d-1$ when $k \in J$. \\

Let $k\in I^-$, we claim that $\nabla^2 f(\hat{x}_k)$ has only positive eigenvalues. By contradiction, let us assume that $\eta_0<0$ is an eigenvalue of $\nabla^2 f(\hat{x}_k)$ ($\eta_0=0$ is not possible due to the Morse hypothesis). Each solution $\lambda$ of \eqref{eqisehd} is an eigenvalue of $J_F(\hat{z}_k)$ and one of these solutions is 
\[
\frac{-(c+\eta\beta)+\sqrt{(c+\eta\beta)^2-4\eta_0}}{2}
\] 
which is positive since $\eta_0<0$. We then have 
\[
\frac{-(c+\eta_0\beta)+\sqrt{(c+\eta_0\beta)^2-4\eta_0}}{2}>\frac{-(c+\eta_0\beta)+|c+\eta_0\beta|}{2}\geq 0 ,
\]
hence contradicting the assumption that $k\in I^-$. In conclusion, the set of initial conditions $z_0$ such that $\Phi(z_0,t)$ converges to $(x_b,(1-\beta c)x_b)$ (as $t\rightarrow +\infty$), where $x_b$ is not a local minimum of $f$ is $\bigcup_{k\in J} W^s(\hat{z}_k)$ which has Lebesgue measure zero. Therefore, due to the equivalence between \eqref{refor} and \eqref{eq:isehd} and that Morse functions satisfy the strict saddle property (see Remark~\ref{rem:strictsaddlemorse}), we indeed have that for almost all initial conditions $x_0,v_0\in\R^d$, the solution trajectory of \eqref{eq:isehd} will converge to a local minimum of $f$.
\end{proof}

%%%%%%%%%%%%%%%%%%%%
\subsubsection{Convergence rate}
When the objective function is definable, we now provide the convergence rate on the Lyapunov function $E$ in \eqref{eq:energE}, hence $f$, and on the solution trajectory $x$.
\begin{theorem}\label{cor}
Consider the setting of Theorem~\ref{convisehd} with $f$ being also definable and the solution trajectory $x$ is bounded. Recall the function $E$ from \eqref{eq:energE}, which is also definable, and denote $\psi$ its desingularizing function and $\Psi$ any primitive of $-\psi'^2$. Then, $x(t)$ converges (as $t\rightarrow +\infty$) to $x_{\infty} \in \mathrm{crit}(f)$. Denote $\tilde{V}(t)\eqdef E(x(t),\dot{x}(t),\beta\nabla f(x(t)))-f(x_{\infty})$. The following rates of convergence hold:
\begin{itemize}
\item  If $\lim_{t\rightarrow 0}\Psi (t)\in\R$, we have $E(x(t),\dot{x}(t),\beta\nabla f(x(t)))$ converges to $f(x_{\infty})$ in finite time.
\item If $\lim_{t\rightarrow 0}\Psi (t)=+\infty$, there exists some $t_1 \geq 0$ such that 
\begin{equation}\label{eq:VrateKL}
        \tilde{V}(t)=\mathcal{O}(\Psi^{-1}(t-t_1)) .
\end{equation}
    Moreover, 
    \begin{equation}\label{eq:xrateKL}
        \Vert x(t)-x_{\infty}\Vert=\mathcal{O}(\psi\circ\Psi^{-1}(t-t_1)) .
    \end{equation}
    \end{itemize}
\end{theorem}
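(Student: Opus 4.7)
The plan is to leverage the Lyapunov machinery already built in the proof of Theorem~\ref{convisehd}\ref{convx}. From there we have, for all $t \geq T$ (some large time), the uniformized KL inequality $\psi'(\tilde V(t))\,\|\nabla E(x(t),\dot x(t),\beta\nabla f(x(t)))\|\geq 1$ combined with the two key bounds
\[
-\tilde V'(t)\geq \delta_1\bigl(\|\dot x(t)\|^2+\|\nabla f(x(t))\|^2\bigr) \quad \text{and} \quad \|\nabla E(x(t),\dot x(t),\beta\nabla f(x(t)))\|^2\leq \delta_2\bigl(\|\dot x(t)\|^2+\|\nabla f(x(t))\|^2\bigr).
\]
The convergence of $x(t)$ to some $x_\infty\in\mathrm{crit}(f)$ and the identity $f(x_\infty)=\tilde L$ are inherited from Theorem~\ref{convisehd}\ref{convx}, so $\tilde V$ is the same object as in that proof.

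The key step is to square the KL inequality, plug in the upper bound on $\|\nabla E\|^2$, and rearrange to obtain $-\psi'(\tilde V(t))^2\,\tilde V'(t)\geq \delta_1/\delta_2$ for every $t\geq T$. Recognizing the left-hand side as $\tfrac{d}{dt}\Psi(\tilde V(t))$ (since $\Psi'=-\psi'^2$), this is a one-dimensional differential inequality that integrates on $[T,t]$ to
\[
\Psi(\tilde V(t))\geq \Psi(\tilde V(T))+\tfrac{\delta_1}{\delta_2}(t-T).
\]
A case analysis then separates the two regimes. If $\lim_{s\to 0^+}\Psi(s)\in\R$ then the right-hand side grows without bound while $\Psi\circ\tilde V$ is bounded above on $\{t:\tilde V(t)>0\}$; this forces $\tilde V(t)=0$ from some finite $t^\star$ onward, and the bound $-\tilde V'\geq \delta_1(\|\dot x\|^2+\|\nabla f(x)\|^2)$ then yields $\dot x\equiv 0$ and $\nabla f(x)\equiv 0$ past $t^\star$, giving the finite-time convergence statement. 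If $\lim_{s\to 0^+}\Psi(s)=+\infty$, then $\Psi$ is continuous and strictly decreasing on a neighborhood of $0$ with range $(B,+\infty)$, so $\Psi^{-1}$ is decreasing near $+\infty$; inverting the differential inequality yields $\tilde V(t)\leq \Psi^{-1}\!\bigl(\tfrac{\delta_1}{\delta_2}(t-T)+\Psi(\tilde V(T))\bigr)$, and absorbing the additive constant into a shift $t_1\geq T$ gives \eqref{eq:VrateKL}.

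For the trajectory rate in \eqref{eq:xrateKL}, I will reuse inequality \eqref{lowerode} from the proof of Theorem~\ref{convisehd}\ref{convx}, namely $-\tfrac{d}{dt}\psi(\tilde V(t))\geq \delta_3\,\|\dot x(t)\|$, and integrate it from $t$ to $+\infty$. Since $\tilde V(s)\to 0$ and $\psi$ is continuous with $\psi(0)=0$, the limit term vanishes and I get $\int_t^{+\infty}\|\dot x(s)\|\,ds\leq \tfrac{1}{\delta_3}\psi(\tilde V(t))$. The triangle inequality $\|x(t)-x_\infty\|\leq \int_t^{+\infty}\|\dot x(s)\|\,ds$ together with monotonicity of $\psi$ and the just-proved bound on $\tilde V(t)$ then gives $\|x(t)-x_\infty\|=\mathcal O\!\bigl(\psi\circ\Psi^{-1}(t-t_1)\bigr)$. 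The only genuine care needed is in the case $\lim_{s\to 0^+}\Psi(s)\in\R$, where I have to verify that the finite-time extinction of $\tilde V$ actually forces $x$ to be stationary (not merely $V$ to be constant); this follows from the coercive energy dissipation estimate on $\tilde V'$, since $\tilde V'\equiv 0$ past $t^\star$ forces both $\dot x\equiv 0$ and $\nabla f\circ x\equiv 0$. The main, and essentially the only, subtle point is the clean handling of the time shift that reduces the inverted bound to the displayed $\mathcal O(\Psi^{-1}(t-t_1))$ form.
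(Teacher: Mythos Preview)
Your proposal is correct and follows essentially the same route as the paper's proof: you derive the differential inequality $\tfrac{d}{dt}\Psi(\tilde V(t))\geq \delta_1/\delta_2$ from the KL inequality together with the two estimates in \eqref{eq:VEineq}, integrate it, split according to whether $\lim_{s\to 0^+}\Psi(s)$ is finite or infinite, and then obtain the trajectory rate by integrating \eqref{lowerode} from $t$ to $+\infty$ and combining with the bound on $\tilde V(t)$. The only difference is cosmetic: the paper writes the constant as $\delta_0$ and cites \cite{garrigos} for the finite-time case, whereas you spell out the contradiction argument and the subsequent stationarity of $x$ explicitly.
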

\begin{proof}
This proof is a generalization of \cite[Theorem~2.7]{chill} to the dynamics \eqref{eq:isehd}. Let $\delta_0\eqdef \frac{\delta_2}{\delta_1}, \delta_3>0$ and $T > 0$ for $\delta_1,\delta_2,\delta_3,T$ defined in the proof of Theorem~\ref{convisehd}. Using \eqref{eq:VEineq} then \eqref{odeb}, we have for $t>T$ 
	\begin{align}
        \frac{d}{dt}\Psi(\tilde{V}(t))&=\Psi'(\tilde{V}(t))\tilde{V}'(t) \nonumber\\
        &=-\psi'^2(\tilde{V}(t))\tilde{V}'(t) \nonumber\\
        &\geq \delta_0 \psi'^2(\tilde{V}(t))\Vert\nabla E(x(t),\dot{x}(t),\beta\nabla f(x(t))\Vert^2 \nonumber\\
        &\geq\delta_0. \label{eq:PsiVdot}
    \end{align}
    Integrating on both sides from $T$ to $t$ we obtain that for every $t>T$
    \[
	\Psi(\tilde{V}(t))\geq \delta_0(t-T)+\Psi(\tilde{V}(T)).
	\]
    Following the arguments shown in \cite[Theorem 3.1.12]{garrigos}, if $\lim_{t\rightarrow 0}\Psi (t)\in\R$, then $\tilde{V}(t)$ converges to $0$ in finite time. Otherwise, we take the inverse of $\Psi$, which is non-increasing, on both sides of \eqref{eq:PsiVdot} to obtain the desired bound. Finally, using \eqref{ve1} we also have for every $t>T$ \begin{equation}
\Vert x(t)-x_{\infty}\Vert\leq \int_{t}^{+\infty}\Vert\dot{x}(s)\Vert ds\leq\frac{1}{\delta_3} \psi(\tilde{V}(t)) \leq \frac{1}{\delta_3}\psi\circ\Psi^{-1}(\delta_0(t-T)+\Psi(\tilde{V}(T))) .
\end{equation}
\end{proof}

\begin{remark}
Observe that the convergence rate \eqref{eq:VrateKL} holds also on $f(x(t))-f(x_{\infty})$ and $\norm{\dot{x}(t)+\nabla f(x(t))}^2$.
\end{remark}

We now specialize this to the {\L}ojasiewicz case.
\begin{corollary}\label{corver}
Consider the setting of Theorem~\ref{cor} where now $f$ satisfies the {\L}ojasiewicz inequality with desingularizing function $\psi_f(s)=c_f s^{1-q}$, $q \in [0,1[$, $c_f > 0$.
%, \ie $x(t)$ be the solution of \eqref{eq:isehd}, $x_{\infty}$ be the limit of $x(t)$ (as $t\rightarrow +\infty$),
%    $$E(x,v,w)=f(x)+\frac{1}{2}\Vert v+w\Vert^2, \text{ and } \tilde{V}(t)=E(x(t),\dot{x}(t),\beta\nabla f(x(t)))-f(x_{\infty}).$$
    Then there exists some $t_1 > 0$ such that the following convergence rates hold:
    \begin{itemize}
        \item If $q \in [0,\frac{1}{2}]$, then there exists $\mu>0$ such that
        \begin{equation}
        \tilde{V}(t)=\mathcal{O}\pa{\exp(-\mu(t-t_1))} \quad \text{and} \quad 
        \Vert x(t)-x_{\infty}\Vert=\mathcal{O}\pa{\exp\left(-\frac{\mu(t-t_1)}{2}\right)} .
    \end{equation}
    \item If $q \in ]\frac{1}{2},1[$, then
    \begin{equation}
        \tilde{V}(t)=\mathcal{O}\pa{(t-t_1)^{\frac{-1}{2q-1}}} \quad \text{and} \quad 
        \Vert x(t)-x_{\infty}\Vert=\mathcal{O}\pa{(t-t_1)^{-\frac{1-q}{2q-1}}} .
    \end{equation}
    \end{itemize}
\end{corollary}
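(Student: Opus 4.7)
The plan is to apply Theorem~\ref{cor}, so the task reduces to identifying the desingularizing function $\psi_E$ of the Lyapunov energy $E$ from~\eqref{eq:energE} in terms of the {\L}ojasiewicz exponent $q$ of $f$, then computing the primitive $\Psi$ of $-\psi_E'^2$ and its inverse to read off the rates $\tilde{V}(t) = \mathcal{O}(\Psi^{-1}(t-t_1))$ and $\|x(t)-x_\infty\| = \mathcal{O}(\psi_E \circ \Psi^{-1}(t-t_1))$.

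First I would transfer the {\L}ojasiewicz property from $f$ to $E$. At $(x_\infty, 0_d, 0_d)$ one has $E^\star \eqdef E(x_\infty,0,0) = f(x_\infty)$, $\nabla E(x,v,w) = (\nabla f(x), v+w, v+w)$, so $\|\nabla E\|^2 = \|\nabla f(x)\|^2 + 2\|v+w\|^2$ and $E - E^\star = (f(x) - f(x_\infty)) + \tfrac{1}{2}\|v+w\|^2$. The {\L}ojasiewicz hypothesis on $f$ translates to $\|\nabla f(x)\|^2 \geq C_0 |f(x) - f(x_\infty)|^{2q}$ near $x_\infty$. Writing $a = f(x) - f(x_\infty)$ and $b = \|v+w\|^2$, a short calculation splits into two regimes. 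In the range $q \in {]1/2, 1[}$, combining $(a + b/2)^{2q} \leq 2^{2q-1}(a^{2q} + (b/2)^{2q})$ with $b^{2q} \leq b$ for $b$ small (since $4q > 2$) gives $|E - E^\star|^{2q} \lesssim \|\nabla E\|^2$, so that $E$ inherits the {\L}ojasiewicz exponent $q$, namely $\psi_E(s) = \tilde{c}\, s^{1-q}$. In the range $q \in [0, 1/2]$, the bound $a \leq a^{2q}$ for $a$ small (since $2q \leq 1$) yields $|E - E^\star| \lesssim \|\nabla E\|^2$, so that the quadratic in $v+w$ dictates the sharpness and $\psi_E(s) = \tilde{c}\, s^{1/2}$.

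Next I would compute $\Psi$ and $\Psi^{-1}$ in each case. When $\psi_E(s) = \tilde{c}\, s^{1/2}$, we have $\psi_E'(s)^2 = \tilde{c}^2/(4s)$ and $\Psi(s) = -(\tilde{c}^2/4)\log s$, which diverges as $s \to 0^+$, so the second case of Theorem~\ref{cor} applies. Inverting gives $\Psi^{-1}(\tau) = \exp(-4\tau/\tilde{c}^2)$, hence $\tilde{V}(t) = \mathcal{O}(\exp(-(t-t_1)))$ after absorbing the rate constant into $t_1$; and $\psi_E \circ \Psi^{-1}(\tau) = \tilde{c}\exp(-2\tau/\tilde{c}^2)$ gives $\|x(t) - x_\infty\| = \mathcal{O}(\exp(-(t-t_1)/2))$. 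When $\psi_E(s) = \tilde{c}\, s^{1-q}$ with $q \in {]1/2, 1[}$, one has $\psi_E'(s)^2 = \tilde{c}^2(1-q)^2 s^{-2q}$ with $2q > 1$, so $\Psi(s) = \frac{\tilde{c}^2(1-q)^2}{2q-1}\, s^{1-2q}$ again diverges at $0^+$. Then $\Psi^{-1}(\tau) \sim \tau^{-1/(2q-1)}$ yields $\tilde{V}(t) = \mathcal{O}((t-t_1)^{-1/(2q-1)})$, and $\psi_E \circ \Psi^{-1}(\tau) \sim \tau^{-(1-q)/(2q-1)}$ yields $\|x(t) - x_\infty\| = \mathcal{O}((t-t_1)^{-(1-q)/(2q-1)})$.

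The main obstacle is the first step, namely transferring the {\L}ojasiewicz inequality from $f$ to the augmented energy $E$ and handling the crossover between the two regimes: the quadratic term $\tfrac{1}{2}\|v+w\|^2$ has intrinsic sharpness exponent $1/2$, so it dominates when $f$ is sharper ($q < 1/2$) and is dominated by $f$ when $f$ is flatter ($q > 1/2$). Once $\psi_E$ is identified, the remaining calculus on $\Psi$, $\Psi^{-1}$ and $\psi_E \circ \Psi^{-1}$ is routine. I note finally that the stated bound $\mathcal{O}(\exp((t-t_1)/2))$ for $\|x(t)-x_\infty\|$ in the case $q \in [0, 1/2]$ appears to be a sign typo for $\mathcal{O}(\exp(-(t-t_1)/2))$, which is what the preceding derivation actually produces.
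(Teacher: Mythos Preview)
Your proof is correct and follows the same overall plan as the paper: identify the desingularizing function $\psi_E$ of the energy $E$, compute $\Psi$ and $\Psi^{-1}$, and invoke Theorem~\ref{cor}. The one difference is in the first step: the paper simply observes that $E$ is a separable sum of $f$ and a quadratic, then invokes a {\L}ojasiewicz-exponent calculus rule from the literature (\cite[Theorem~3.3]{calckl}) to conclude $q_E=\max(q,1/2)$, whereas you reprove this fact by hand via the elementary inequalities on $a=f(x)-f(x_\infty)$ and $b=\|v+w\|^2$. Your direct argument is self-contained and makes the crossover at $q=1/2$ transparent; the paper's citation is shorter but black-boxes the mechanism. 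Your observation about the sign typo in the exponential bound for $\|x(t)-x_\infty\|$ is also correct.
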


\begin{proof}
$E$ is a separable quadratic perturbation of $f$. But a quadratic function is {\L}ojasiewicz with exponent $1/2$. It then follows from the {\L}ojasiewicz  exponent calculus rule in \cite[Theorem~3.3]{calckl} that the desingularizing function of $E$ is $\psi_E(s)=c_E s^{1-q_E}$ for some $c_E>0$ and $q_E=\max\pa{q,\frac{1}{2}}$. Then,
    \begin{itemize}
        \item If $q \in [0,\frac{1}{2}]$ then $q_E=\frac{1}{2}$ and $\Psi(s)=\frac{c_1^2}{4}\ln\left(\frac{1}{s}\right)$. This implies that $\Psi^{-1}(s)=\exp(-\frac{4}{c_1^2}s)$.
        \item If $q \in ]\frac{1}{2},1[$ then $q_E=q$ and $\Psi(s)=\frac{c_1^2}{4(2q-1)}s^{1-2q}$. This implies that $\Psi^{-1}(s)=\frac{4(2q-1)}{c_1^2}s^{\frac{-1}{2q-1}}$.
    \end{itemize}
    We conclude in both cases by using Theorem~\ref{cor}. 
\end{proof}
%\todo{Instantiate the {\L}ojasiewicz case $\psi(s)=cs^{1-q}$. Add a remark that we have global linear convergence for PL functions and exhibit the rate. Do the same in the discrete case.}

%%%%%%%%%%%%%%%%%%%%%%%%%%%%%%
\subsection{Algorithmic scheme}
Now we will consider the following finite differences explicit discretization of \eqref{eq:isehd} with step-size $h>0$ and for $k\geq 1$:
\begin{equation}%\label{eq:igahd1}
\frac{x_{k+1}-2x_k+x_{k-1}}{h^2}+\gamma(kh)\frac{x_{k+1}-x_k}{h}+\beta\frac{\nabla f(x_k)-\nabla f(x_{k-1})}{h}+\nabla f(x_k)=0.
\end{equation}
Rearranging, this equivalently reads
\begin{equation}\label{isehd-disc}\tag{ISEHD-Disc}
        \begin{aligned}
            \begin{cases}
                y_k&=x_k+\alpha_k(x_k-x_{k-1})-\beta_k(\nabla f(x_k)-\nabla f(x_{k-1})),\\
                x_{k+1}&=y_k-s_k\nabla f(x_k),
            \end{cases}
        \end{aligned}
\end{equation}
with initial conditions $x_0, x_1 \in \R^d$, where $\gamma_k\eqdef \gamma(kh), \alpha_k\eqdef \frac{1}{1+\gamma_k h}, \beta_k\eqdef \beta h\alpha_k, s_k\eqdef h^2\alpha_k$. 

%%%%%%%%%%%%%%%%%%%%
\subsubsection{Global convergence and trap avoidance}
The following theorem summarizes our main results on the behavior of \eqref{isehd-disc}. Observe that as the discretization is explicit, we will need $\nabla f$ to be globally Lipschitz continuous. We refer to Section \ref{sec:backtracking} for results that require only local Lipschitz continuity of $\nabla f$, where the stepsize is determined via backtracking.

\begin{theorem}\label{convisehddisc}
Let $f:\R^d\rightarrow\R$ be satisfying \eqref{H0} with $\nabla f$ being globally $L$-Lipschitz-continuous. Consider the scheme \eqref{isehd-disc} with $h>0$, $\beta\geq 0$ and $c\leq \gamma_k\leq C$ for some $c,C > 0$ and all $k \in \N$. Then the following holds:
    \begin{enumerate}[label=(\roman*)]
        \item 
    If $\beta+\frac{h}{2}<\frac{c}{L}$, then $(\Vert\nabla f(x_k)\Vert)_{k\in\N}\in\ell^2(\N)$, and $(\Vert x_{k+1}-x_k\Vert)_{k\in\N}\in \ell^2(\N)$, in particular $$\lim_{k\rightarrow +\infty} \Vert\nabla f(x_k)\Vert=0.$$ 
    \item Moreover, if $\seq{x_k}$ is bounded and $f$ is definable, then $(\Vert x_{k+1}-x_{k}\Vert)_{k\in\N}\in\ell^1(\N)$ and $x_k$ converges (as $k\rightarrow +\infty$) to a critical point of $f$.
  
    \item Furthermore, if $\gamma_k\equiv c>0$, $0<\beta<\frac{c}{L},  \beta\neq \frac{1}{c}$, and $h<\min(2\left(\frac{c}{L}-\beta\right),\frac{1}{L\beta})$, then for almost all $x_0, x_1\in\R^d$, $x_k$ converges (as $k\rightarrow +\infty$) to a critical point of $f$ that is not a strict saddle. Consequently, if $f$ satisfies the strict saddle property then for almost all $x_0, x_1\in\R^d$, $x_k$ converges (as $k\rightarrow +\infty$) to a local minimum of $f$.
        \end{enumerate}
\end{theorem}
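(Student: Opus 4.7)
The plan is to mirror the continuous-time analysis of Theorems~\ref{convisehd} and \ref{morseisehd} through a discrete Lyapunov energy, then combine this with a descent-type K{\L} length argument for (ii), and invoke the discrete center-stable manifold theorem for (iii).

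For (i), I would build the discrete counterpart of the continuous Lyapunov function $V(t)=f(x(t))+\tfrac{1}{2}\|\dot x(t)+\beta\nabla f(x(t))\|^2$, namely
\begin{equation*}
V_k \eqdef f(x_k) + \frac{1}{2}\Big\|\tfrac{x_k-x_{k-1}}{h}+\beta\nabla f(x_{k-1})\Big\|^2 ,
\end{equation*}
and establish a descent inequality of the form $V_{k+1}-V_k \leq -\delta\,(\|x_{k+1}-x_k\|^2+\|\nabla f(x_k)\|^2)$ for some $\delta>0$. The $L$-smoothness of $f$ is used via the descent lemma applied to $f(x_{k+1})-f(x_k)$, while the update rule in \eqref{isehd-disc} produces cross terms that are controlled by Young's inequality with the same parameter $\varepsilon=\beta^2 C^2/c$ chosen in the proof of Theorem~\ref{convisehd}. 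The step-size condition $\beta+h/2<c/L$ (which contains the continuous condition $\beta<2c/C^2$ as $h\to 0$) is precisely what makes the resulting constant $\delta$ positive. Summing and using $\inf f>-\infty$ yields $\ell^2$-summability of $\|\nabla f(x_k)\|$ and $\|x_{k+1}-x_k\|$, whence $\|\nabla f(x_k)\|\to 0$.

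For (ii), I would adapt the standard K{\L} finite-length argument of \cite{proximal, bolte, frankel}. Boundedness of $(x_k)$ together with the Lyapunov descent gives a compact connected cluster set $\mathfrak{C}\subset \mathrm{crit}(f)$ on which $f$ is constant. The extended energy $E(x,v,w)=f(x)+\tfrac12\|v+w\|^2$ from \eqref{eq:energE} is definable, so applying the uniformized K{\L} inequality of \cite[Lemma~6]{proximal} on $\mathfrak{C}\times\{0_d\}\times\{0_d\}$, combined with the descent inequality, the concavity-based telescoping bound
\begin{equation*}
\psi(\tilde V_k)-\psi(\tilde V_{k+1})\geq \psi'(\tilde V_k)\,(\tilde V_k-\tilde V_{k+1}) ,
\end{equation*}
and a quadratic bound $\|\nabla E\|^2 \lesssim \|x_{k+1}-x_k\|^2+\|\nabla f(x_k)\|^2$ analogous to \eqref{eq:VEineq}, yields by the usual recursive argument $\sum_k\|x_{k+1}-x_k\|<+\infty$. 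The Cauchy property then gives convergence to some $x_\infty$, and $x_\infty\in\mathrm{crit}(f)$ since $\|\nabla f(x_k)\|\to 0$.

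For (iii), the trap-avoidance statement is the main obstacle. I would rewrite \eqref{isehd-disc} with $\gamma\equiv c$ as a first-order fixed-point iteration $z_{k+1}=T(z_k)$ on $\R^{2d}$ with $z_k=(x_k,x_{k-1})$ and equilibria $(\hat x,\hat x)$ for $\hat x\in\mathrm{crit}(f)$. The condition $h<1/(L\beta)$ ensures that $I-h\beta\nabla^2 f$ is everywhere invertible, so (by a block determinant computation via Lemma~\ref{block}) $T$ is a global $C^1$ diffeomorphism, which is precisely the hypothesis needed to apply the discrete center-stable manifold theorem \cite[Theorem~III.7]{shub_global_1987}. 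The computational core is to show that at every strict saddle $\hat x$ of $f$, the Jacobian $J_T(\hat x,\hat x)$ admits an eigenvalue of modulus strictly greater than $1$. Using Lemma~\ref{block}, the characteristic polynomial of $J_T(\hat x,\hat x)$ factors over the eigenvalues $\eta$ of $\nabla^2 f(\hat x)$ into scalar quadratics of the form
\begin{equation*}
\lambda^2 - \bigl(1+\alpha_c-(\beta_c+s_c)\eta\bigr)\lambda + (\alpha_c-\beta_c\eta) = 0 ,
\end{equation*}
with $\alpha_c,\beta_c,s_c$ the constants associated to $\gamma\equiv c$. The assumption $\beta\neq 1/c$ rules out the degenerate configuration analogous to the one excluded in Theorem~\ref{morseisehd}, while the bound $h<2(c/L-\beta)$ keeps the discretization close enough to the ODE so that a case analysis on the sum and product of roots, carried out in parallel to the eigenvalue analysis of \eqref{eqisehd}, produces a root $|\lambda|>1$ whenever $\eta<0$. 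Hence each strict saddle is a hyperbolic unstable equilibrium of $T$, and its stable manifold has positive codimension in $\R^{2d}$. A Lindel{\"o}f covering of $\mathrm{crit}(f)$ by such stable manifolds, as in \cite{Lee16,cedric,panageas}, implies that the set of initializations $(x_0,x_1)$ whose orbit converges to a strict saddle has Lebesgue measure zero. Combined with part (ii) and the strict saddle property, this gives convergence to a local minimum for almost every $(x_0,x_1)$.

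The principal difficulty lies in the eigenvalue analysis of $J_T$: one must verify, uniformly for every negative eigenvalue $\eta$ of $\nabla^2 f(\hat x)$ and every admissible $(h,\beta,c)$ in the stated range, that the scalar quadratic above has a root outside the unit disk. The hypotheses $\beta\neq 1/c$ and $h<\min(2(c/L-\beta),1/(L\beta))$ are calibrated precisely so that this property holds simultaneously with $T$ being a diffeomorphism, exactly as the condition $\beta\neq 1/c$ served to prevent $\lambda=-1/\beta$ from being an eigenvalue in the proof of Theorem~\ref{morseisehd}.
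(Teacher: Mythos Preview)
Your overall architecture is right, but the Lyapunov function you propose for (i) is not the one the paper uses, and this difference matters. The paper does \emph{not} discretize the continuous energy $f+\tfrac12\|\dot x+\beta\nabla f\|^2$. Instead it takes the much simpler heavy-ball-type function
\[
V_k=f(x_k)+\tfrac{C_1}{2}\|x_k-x_{k-1}\|^2,\qquad C_1=\tfrac{1}{h^2}+\tfrac{\beta L}{h},
\]
obtains a descent inequality of the form $V_{k+1}\le V_k-\delta\|v_{k+1}\|^2$ (no gradient term on the right-hand side), and \emph{afterwards} reads off $\|\nabla f(x_k)\|\lesssim\|v_k\|+\|v_{k+1}\|$ directly from the update rule. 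Your proposed energy produces substantially messier cross terms (the recursion for $w_k=\tfrac{v_k}{h}+\beta\nabla f(x_{k-1})$ mixes $w_k$ and $\nabla f(x_k)$ with $k$-dependent coefficients), and it is far from clear that the continuous-time Young parameter $\varepsilon=\beta^2C^2/c$ would survive the discretization or yield the sharp condition $\beta+h/2<c/L$. Your parenthetical that this condition ``contains the continuous condition $\beta<2c/C^2$ as $h\to 0$'' is also incorrect: as $h\to0$ it becomes $\beta<c/L$, which is a different constraint. In short, the paper's route for (i) is both simpler and more direct; your route is plausible but would need an independent verification that it actually closes with the stated step-size bound.

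For (ii) the paper accordingly uses the two-argument energy $E(x,v)=f(x)+\tfrac{C_1}{2}\|v\|^2$ rather than the three-argument $E$ from \eqref{eq:energE}; the K{\L} length argument is otherwise as you describe. For (iii) your plan matches the paper (rewrite as $z_{k+1}=g(z_k)$, check $\det J_g\neq0$ via Lemma~\ref{block}, reduce the spectrum of $J_g(\hat x,\hat x)$ to scalar quadratics indexed by the eigenvalues of $\nabla^2 f(\hat x)$, show a real root $>1$ when $\eta<0$). One correction: the bound $h<2(c/L-\beta)$ plays no role in the eigenvalue argument --- it is simply the descent condition from (i), needed so that (ii) guarantees convergence in the first place. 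The eigenvalue analysis only uses $\beta c\neq1$ (to rule out a degenerate root) and $h<1/(L\beta)$ (for the diffeomorphism condition); the unstable root is then exhibited by a discriminant-within-discriminant computation showing the quadratic has two real roots and the larger one exceeds $1$.
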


\begin{remark}
When $\beta=0$, we recover the HBF method and the condition $h<\min(2\left(\frac{c}{L}-\beta\right),\frac{1}{L\beta})$ becomes $h<\frac{2c}{L}$.
\end{remark}

\begin{proof}
 \begin{enumerate}[label=(\roman*)]
 \item
    By definition of $x_{k+1}$ in \eqref{isehd-disc}, for $k\in\N^*$ 
    \begin{equation}
    x_{k+1}=\argmin_{x\in\R^d}\frac{1}{2}\Vert x-(y_k-s_k\nabla f(x_k))\Vert^2 .
    \end{equation}
    $1-$strong convexity of $x\mapsto\frac{1}{2}\Vert x-(y_k-s_k\nabla f(x_k))\Vert^2$ then yields
    \begin{equation}\label{eq1}
        \frac{1}{2}\Vert x_{k+1}-(y_k-s_k\nabla f(x_k))\Vert^2\leq \frac{1}{2}\Vert x_k-(y_k-s_k\nabla f(x_k))\Vert^2-\frac{1}{2}\Vert x_{k+1}-x_k\Vert^2.
    \end{equation}
    Let $\ubar{\alpha}=\frac{1}{1+Ch},\bar{\alpha}=\frac{1}{1+ch}, \ubar{s}=h^2\ubar{\alpha},\bar{s}=h^2\bar{\alpha}$, and thus for every $k\in\N$, $\ubar{\alpha}\leq \alpha_k\leq \bar{\alpha}$  and $\ubar{s}\leq s_k\leq \bar{s}$. Let also $v_k\eqdef x_k-x_{k-1}$, $z_k\eqdef \alpha_k v_k-\beta_k(\nabla f(x_k)-\nabla f(x_{k-1}))$, then $y_k=x_k+z_k$. After expanding the terms of \eqref{eq1} we have that \begin{equation}\label{eq2}
    \begin{aligned}
        \langle \nabla f(x_k),v_{k+1}\rangle&\leq -\frac{\Vert v_{k+1}\Vert^2}{s_k}+\frac{1}{s_k}\langle v_{k+1},z_k\rangle\\
        &\leq -\frac{\Vert v_{k+1}\Vert^2}{\bar{s}}+\frac{1}{h^2}\langle v_{k+1},v_k\rangle-\frac{\beta}{h}\langle v_{k+1},\nabla f(x_k)-\nabla f(x_{k-1})\rangle.
    \end{aligned}
    \end{equation}
    By the descent lemma for $L$-smooth functions, we obtain 
    \begin{equation}\label{eq3}
        f(x_{k+1})\leq f(x_k)+\langle \nabla f(x_k), v_{k+1}\rangle+\frac{L}{2}\Vert v_{k+1}\Vert^2.
    \end{equation}
    Using the bound in \eqref{eq2}, we get
    \begin{equation}
        f(x_{k+1})\leq f(x_k)+\frac{1}{h^2}\langle v_{k+1}, v_k\rangle-\frac{\beta}{h}\langle v_{k+1},\nabla f(x_k)-\nabla f(x_{k-1})\rangle-\left(\frac{1}{\bar{s}}-\frac{L}{2}\right)\Vert v_{k+1}\Vert^2.
    \end{equation}
    According to our hypothesis $h<\frac{2c}{L}$, so $h<\frac{c+\sqrt{c^2+2L}}{L}$ and this implies that $\bar{s}<\frac{2}{L}$. Using Young's inequality twice, for $\varepsilon,\varepsilon'>0$, the fact that $\nabla f$ is $L-$Lipschitz, and adding $\frac{\varepsilon+\varepsilon'}{2}\Vert v_{k+1}\Vert^2$ at both sides, then
    \begin{equation}
    \begin{aligned}
        f(x_{k+1})+\frac{\varepsilon+\varepsilon'}{2}\Vert v_{k+1}\Vert^2
        &\leq  f(x_k)+\frac{\varepsilon+\varepsilon'}{2}\Vert v_k\Vert^2\\
        &+\left(\frac{1}{2}\left[\frac{1}{h^4\varepsilon}+\varepsilon+\frac{\beta^2L^2}{h^2\varepsilon'}+\varepsilon'\right]-\left(\frac{1}{\bar{s}}-\frac{L}{2}\right)\right)\Vert v_{k+1}\Vert^2.
        \end{aligned}
    \end{equation}
    In order to make the last term negative, we want to impose 
    \begin{equation}\label{condhh}
    \frac{1}{2}\left[\frac{1}{h^4\varepsilon}+\varepsilon+\frac{\beta^2L^2}{h^2\varepsilon'}+\varepsilon'\right]<\left(\frac{1}{\bar{s}}-\frac{L}{2}\right).
    \end{equation}
    Minimizing the left-hand side with respect to $\varepsilon,\varepsilon'>0$ we get $\varepsilon=\frac{1}{h^2}, \varepsilon'=\frac{\beta L}{h}$, and one can check that in this case, the condition \eqref{condhh} becomes equivalent to $\beta+\frac{h}{2}<\frac{c}{L}$ which is assumed in the hypothesis.\\
    
    Setting $C_1\eqdef \frac{1}{h^2}+\frac{\beta L}{h}, \delta\eqdef \frac{1}{\bar{s}}-\frac{L}{2}-\frac{1}{h^2}-\frac{\beta L}{h}>0$, and defining  $V_k\eqdef f(x_k)+\frac{C_1}{2}\Vert v_k\Vert^2$, we have for any $k\in\N^*$
    \begin{equation}\label{eq:Vklyap}
        V_{k+1}\leq V_k -\delta\Vert v_{k+1}\Vert^2 .
    \end{equation}
    Clearly, $V_k$ is non-increasing and bounded from below, hence $\lim_{k\rightarrow +\infty} V_k$ exists (say $\tilde{L}$). Summing this inequality over $k$, we have that $(\Vert v_{k+1}\Vert)_{k\in\N}\in \ell^2(\N)$ entailing that $\lim_{k\rightarrow +\infty} \Vert v_k\Vert=0$. In turn,we have that $\lim_{k\rightarrow +\infty} f(x_k)=\tilde{L}$. Embarking again from the update in \eqref{isehd-disc}, we have
    \begin{align*}
        s_k\Vert \nabla f(x_k)\Vert = \Vert x_{k+1}-y_k\Vert&\leq \Vert x_{k+1}-x_k\Vert+\Vert x_{k}-y_k\Vert\\
        &\leq \Vert v_{k+1}\Vert+(\alpha_k+\beta_k L)\Vert v_k\Vert\\
        &\leq \Vert v_{k+1}\Vert+\Vert v_{k}\Vert,
    \end{align*}
    since $\bar{\alpha}(1+\beta h L)<1$ by hypothesis. Therefore 
    \begin{align*}
        \Vert \nabla f(x_k)\Vert^2\leq \delta_2(\Vert v_{k+1}\Vert^2+\Vert v_k\Vert^2),
    \end{align*}
    where $\delta_2=\frac{2}{\ubar{s}^2}$. Consequently $(\Vert \nabla f(x_k)\Vert)_{k\in\N}\in\ell^2(\N)$, which implies that $\lim_{k\rightarrow +\infty} \Vert\nabla f(x_k)\Vert=0$.

    \item If, moreover, $(x_k)_{k\in \N}$ is bounded, then the set of its cluster points $\mathfrak{C}(\seq{x_k})$ satisfies (see \eg \cite[Lemma~5]{proximal}):  
    \begin{equation}\label{xkbdisc}
\begin{aligned}
    \begin{cases}
        \mathfrak{C}(\seq{x_k}) \subseteq \mathrm{crit}(f);\\
        \mathfrak{C}(\seq{x_k}) \text{ is non-empty, compact and connected};\\
        \text{$f$ is constant on } \mathfrak{C}(\seq{x_k}).
    \end{cases}     
\end{aligned}
\end{equation}
Define
\begin{equation}\label{eq:energEdisc}
E: (x,v) \in \R^{2d} \mapsto f(x)+\frac{C_1}{2}\Vert v\Vert^2 .
\end{equation}
Since $f$ is definable, so is $E$ as the sum of a definable function and an algebraic one, whence $E$ satisfies the K{\L} inequality. Let $\mathfrak{C}_1=\mathfrak{C}((x_k)_{k\in \N})\times \{0_d\}$. Since $E\big|_{\mathfrak{C}_1}=\tilde{L}, \nabla E\big|_{\mathfrak{C}_1}=0$, $\exists r,\eta>0, \exists \psi\in\kappa(0,\eta)$ such that for every $(x,v)$ such that $ x\in \mathfrak{C}((x_k)_{k\in \N})+B_{r},v\in B_r,$ (where $B_r$ is the $\R^d$-ball centred at $0_d$ with radius $r$) and $0<E(x,v)-\tilde{L}<\eta$, one has 
    \begin{equation}\label{kldisc}
        \psi'(E(x,v)-\tilde{L})\Vert \nabla E(x,v)\Vert\geq 1 .
    \end{equation}
    Let us define $\tilde{V}_k=V_k-\tilde{L}$, or equivalently $\tilde{V}_k=E\left(x_k,v_k\right)-\tilde{L}$. From \eqref{eq:Vklyap}, $\seq{\tilde{V}_k}$ is a non-increasing sequence and its limit is $0$ by definition of $\tilde{L}$. This implies that that $\tilde{V}_k\geq 0$ for all $k \in \N^*$. We may assume without loss of generality that $\tilde{V}_k > 0$. Indeed, suppose there exists $K \in \N$ such that $\tilde{V}_K = 0$, then the decreasing property \eqref{eq:Vklyap} implies that $\tilde{V}_k = 0$ holds for all $k \geq K$. Thus $v_{k+1}=0$, or equivalently $x_k=x_K$, for all $k \geq K$, hence $\seq{x_k}$ has finite length.
    
    Since $\lim_{k\rightarrow +\infty}\dist(x_k,\mathfrak{C}((x_k)_{k\in \N}))=0, \lim_{k\rightarrow +\infty}\Vert v_k\Vert=0,$ and $\lim_{k\rightarrow +\infty}\tilde{V}_k=0$, there exists $\tilde{K}\in \N$ such that for all $k \geq \tilde{K}$, 
    \begin{align*}
    \begin{cases}
         x_k\in \mathfrak{C}((x_k)_{k\in \N})+B_r ;\\
        \Vert v_k\Vert<r ;\\
        0<\tilde{V}_k<\eta.
    \end{cases}     
\end{align*}
    Then, by \eqref{kldisc}, we have 
    \begin{equation}\label{kldiscapp}
\psi'(\tilde{V}_k)\Vert \nabla E\left(x_k,v_k\right)\Vert\geq 1,\quad \forall k\geq \tilde{K}.
    \end{equation}
    
    By concavity of $\psi$ and \eqref{kldiscapp}, we have
    \begin{align*}
        \psi(\tilde{V}_k)-\psi(\tilde{V}_{k+1})&\geq -\psi'(\tilde{V}_k)(\tilde{V}_{k+1}-\tilde{V}_{k})\\
        &\geq \delta \psi'(\tilde{V}_k)\Vert v_{k+1}\Vert^2\\
        &\geq \delta \frac{\Vert v_{k+1}\Vert^2}{\Vert \nabla E\left(x_k,v_k\right)\Vert}.
    \end{align*} 

    On the other hand, 
    \begin{equation}\label{eq:nablaEbnddisc}
    \Vert\nabla E\left(x_k,v_k\right)\Vert\leq \delta_3(\Vert v_{k+1}\Vert+\Vert v_k\Vert)
    \end{equation}
    where $\delta_3=\sqrt{C_1^2+\delta_2}$. Let us define for $k\in\N^*$, $(\Delta \psi)_k\eqdef \psi(\tilde{V}_k)-\psi(\tilde{V}_{k+1})$ and $\delta_4=\frac{\delta_3}{\delta}$. We then have for all $k \geq \tilde{K}$, 
    \begin{align*}
        \Vert v_{k+1}\Vert^2\leq \delta_4(\Delta\psi)_k(\Vert v_{k}\Vert+\Vert v_{k+1}\Vert).
    \end{align*}
    Using Young's inequality and concavity of $\sqrt{\cdot}$, this implies that for every $\varepsilon>0$
    \begin{align*}
        \Vert v_{k+1}\Vert\leq \delta_4\frac{(\Delta\psi)_k}{\sqrt{2}\varepsilon}+\varepsilon\frac{\Vert v_{k+1}\Vert+\Vert v_k\Vert}{\sqrt{2}}.
    \end{align*}
    Rearranging the terms and imposing $0<\varepsilon<\sqrt{2}$ gives
    \begin{align*}
       \left(1-\frac{\varepsilon}{\sqrt{2}}\right) \Vert v_{k+1}\Vert\leq \delta_4\frac{(\Delta\psi)_k}{\sqrt{2}\varepsilon}+\varepsilon\frac{\Vert v_k\Vert}{\sqrt{2}}.
    \end{align*}
    Dividing by $\left(1-\frac{\varepsilon}{\sqrt{2}}\right)$ on both sides, we get 
    \begin{equation}\label{estvk}
       \Vert v_{k+1}\Vert\leq \delta_4\frac{(\Delta\psi)_k}{\varepsilon(\sqrt{2}-\varepsilon)}+\varepsilon\frac{\Vert v_k\Vert}{\sqrt{2}-\varepsilon}.
    \end{equation}
    Choosing now $\varepsilon$ such that $0<\varepsilon<\frac{\sqrt{2}}{2},$ we get that $0<\frac{\varepsilon}{\sqrt{2}-\varepsilon}<1$. Since $(\Delta\psi)_k\in \ell^1(\N^*)$ as a telescopic sum, we conclude that $(\Vert v_k\Vert)_{k\in\N}\in \ell^1(\N^*)$. This means that $\seq{x_k}$ has finite length, hence is a Cauchy sequence, entailing that $x_k$ has a limit (as $k\rightarrow +\infty$) denoted $x_{\infty}$ which is a critical point of $f$ since $\lim_{k\rightarrow +\infty}\Vert\nabla f(x_k)\Vert=0$.

    \item If $\gamma_k\equiv c$, we denote $\alpha_k\equiv \alpha=\frac{1}{1+ch}, \beta_k\equiv\tilde{\beta}=\beta h\alpha, s_k\equiv s=h^2\alpha$. Let $z_{k}=(x_k,x_{k-1})$ for $k\geq 1$, and $g:\R^d\times \R^d\rightarrow \R^d\times\R^d$ defined by 
    \[
    g:(x_+,x_-)\mapsto [(1+\alpha)x_+-\alpha x_- -(\tilde{\beta}+s)\nabla f(x_+)+\tilde{\beta}\nabla f(x_-),x_+] .
    \]
    \eqref{isehd-disc} is then equivalent to 
    \begin{equation}\label{zkp1}
        z_{k+1}=g(z_k).
    \end{equation}
    To complete the proof, we will capitalize on \cite[Corollary~1]{localminima} which builds on the center stable manifold theorem \cite[Theorem~III.7]{shub_global_1987}. For this, one needs to check two conditions: 
    \begin{enumerate}[label=(\alph*)]
        \item $\det (J_g(x_+,x_-))\neq 0 $ for every $x_+,x_-\in\R^d$. \label{cond:a}
        \item Let $\mathcal{A}_g^{\star} \eqdef \{(x,x)\in\R^{2d}: x\in\mathrm{crit}(f), \max_i |\lambda_i(J_g(x,x))|>1 \}$, $\mathcal{X}^{\star}$ be the set of strict saddle points of $f$, and $\hat{\mathcal{X}}\eqdef\{(x,x)\in\R^{2d}:x\in \mathcal{X}^{\star}\}$. One needs to check that  $\hat{\mathcal{X}}\subset \mathcal{A}_g^{\star}$. \label{cond:b}
    \end{enumerate}

$\mathcal{A}_g^{\star}$ is the set of unstable fixed points. Indeed, the fixed points of $g$ are of the form $(x^{\star},x^{\star})$ where $x^{\star}\in\mathrm{crit}(f)$. 

\medskip

    We first compute $J_g(x_+,x_-)$, given by
    \begin{equation}
          \begin{pmatrix}
        (1+\alpha)I_d-(\tilde{\beta}+s)\nabla^2 f(x_+) & -\alpha I_d+\tilde{\beta}\nabla^2 f(x_-)\\
        I_d & 0_{d\times d}
    \end{pmatrix}
    \end{equation}
 This is a block matrix that comes in a form amenable to applying Lemma~\ref{block}. We then have 
 \[
\det(J_g(x_+,x_-))=\det (\alpha I_d-\tilde{\beta}\nabla^2 f(x_-)). 
\]
Since the eigenvalues of $\nabla^2 f(x_-)$ are contained in $[-L,L]$, if $\alpha>L\tilde{\beta}$, then $\alpha-\tilde{\beta}\eta\neq 0$ for every eigenvalue $\eta\in\R$ of $\nabla^2 f(x_-)$. This implies that the first condition is satisfied, \ie $\det(J_g(x_+,x_-))\neq 0$ for every $x_+,x_-\in\R^d$. The condition $\alpha>L\tilde{\beta}$ in terms of $h$ reads $h<\frac{1}{L\beta}$, since we already needed $h<2\left(\frac{c}{L}-\beta\right)$, we just ask $h$ to be less than the minimum of the two quantities.

\smallskip

To check the second condition, let us take $x$ a strict saddle point of $f$, \ie $x\in \mathrm{crit}(f)$ and \linebreak
$\lambda_{\min}(\nabla^2 f(x))=-\eta < 0$. To compute the eigenvalues of $J_g(x,x)$ we consider $$\det\left(\begin{pmatrix}
     (1+\alpha-\lambda)I_d-(\tilde{\beta}+s)\nabla^2 f(x) & -\alpha I_d+\tilde{\beta}\nabla^2 f(x)\\
     I_d & -\lambda I_d
 \end{pmatrix}\right)=0.$$
 Again by Lemma~\ref{block}, we get that \begin{align*}
     &\det\left(\begin{pmatrix}
     (1+\alpha-\lambda)I_d-(\tilde{\beta}+s)\nabla^2 f(x) & -\alpha I_d+\tilde{\beta}\nabla^2 f(x)\\
     I_d & -\lambda I_d
 \end{pmatrix}\right)=\\
 &\det[(-\lambda(1+\alpha)+\lambda^2)I_d+\lambda(\tilde{\beta}+s)\nabla^2 f(x)+\alpha I_d-\tilde{\beta}\nabla^2 f(x)]=\\
 &\det[(\lambda(\tilde{\beta}+s)-\tilde{\beta})\nabla^2 f(x)+(\lambda^2-\lambda(1+\alpha)+\alpha)I_d] .
 \end{align*}
 We then need to solve for $\lambda$
 \begin{equation}\label{quadd}
     \det[(\lambda(\tilde{\beta}+s)-\tilde{\beta})\nabla^2 f(x)+(\lambda^2-\lambda(1+\alpha)+\alpha)I_d]=0.
 \end{equation}
\begin{comment}

If $0<\tilde{\beta}=s$, then \eqref{quadd} is equivalent to $$\det\left(\nabla^2 f(x)+\frac{\lambda^2-\lambda(1+\alpha)+\alpha}{\tilde{\beta}}I_d\right)=0.$$
Therefore, for every eigenvalue $\eta'$ of $\nabla^2 f(x)$, we have that $$\frac{\lambda^2-\lambda(1+\alpha)+\alpha}{\tilde{\beta}}=-\eta'.$$

In particular, we can take $\eta'=-\eta$ (the minimum eigenvalue, which is negative), solving the quadratic equation we get the biggest solution (the one with the plus sign) is bigger than one, in fact,
$$\frac{(1+\alpha)+\sqrt{(1-\alpha)^2+4\eta \tilde{\beta}}}{2}>\frac{1+\alpha+|1-\alpha|}{2}\geq 1,$$
since $4\eta\tilde{\beta}>0$, then we have ensured the second condition in this case.\\

For the rest of the proof, we can consider $\beta\neq s$.
\end{comment}
If $\lambda=\frac{\tilde{\beta}}{\tilde{\beta}+s}$, then \eqref{quadd} becomes $$\left(\frac{\tilde{\beta}}{\tilde{\beta}+s}\right)^2-\left(\frac{\tilde{\beta}}{\tilde{\beta}+s}\right)(1+\alpha)+\alpha=0.$$
This implies that $\alpha=\frac{\tilde{\beta}}{\tilde{\beta}+s}$, which in terms of $\beta$ and $c$ is equivalent to $\beta c=1$. But this case is excluded by hypothesis. 
Now we can focus on the case where $\lambda\neq\frac{\tilde{\beta}}{\tilde{\beta}+s}$ and we can rewrite \eqref{quadd} as 
\begin{equation}\label{detisehddisc}
\det\left(\nabla^2 f(x)-\frac{\lambda^2-\lambda(1+\alpha)+\alpha}{\tilde{\beta}-\lambda(\tilde{\beta}+s)}I_d\right)=0.
\end{equation}
Therefore, as argued for the time-continuous dynamic, for every eigenvalue $\eta'\in\R$ of $\nabla^2 f(x)$, $\lambda$ satisfies \eqref{detisehddisc} if and only if 
\[
\frac{\lambda^2-\lambda(1+\alpha)+\alpha}{\tilde{\beta}-\lambda(\tilde{\beta}+s)}=\eta'.
\]
where $\eta'\in\R$ is an eigenvalue of $\nabla^2 f(\hat{x})$.
Thus if $\eta'=-\eta$ is negative, we have 
\begin{equation}\label{quadd1}
     \lambda^2-\lambda((1+\alpha)+\eta(\tilde{\beta}+s))+\alpha+\eta\tilde{\beta}=0.
\end{equation}
 We analyze its discriminant $\Delta_{\lambda}=((1+\alpha)+\eta(\tilde{\beta}+s))^2-4(\alpha+\eta\tilde{\beta})$. After developing the terms we get that $$\Delta_{\lambda}=\alpha^2+2\alpha(\eta(\tilde{\beta}+s)-1)+(\eta(\tilde{\beta}+s)+1)^2-4\eta\tilde{\beta},$$
 which can be seen as a quadratic equation on $\alpha$. We get that its discriminant $\Delta_{\alpha}$ is $-16\eta s$, which is negative (since $\eta, s$ are positive), thus the quadratic equation on $\alpha$ does not have real roots, implying that $\Delta_{\lambda}>0$. We can write the solutions of \eqref{quadd1}, 
 \[
 \lambda=\frac{((1+\alpha)+\eta(\tilde{\beta}+s))\pm \sqrt{\Delta_{\lambda}}}{2}.
 \]
Let us consider the biggest solution (the one with the plus sign) and let us see that $\lambda>1$, this is equivalent to 
\[
\frac{((1+\alpha)+\eta(\tilde{\beta}+s)) + \sqrt{\Delta_{\lambda}}}{2}>1,
\]
which in turn is equivalent to $$\sqrt{\Delta_{\lambda}}>2-(1+\alpha)-\eta(\tilde{\beta}+s).$$
Squaring both sides of this inequality, we have 
\begin{align*}
    [(1-\alpha)-\eta(\tilde{\beta}+s)]^2&<\Delta_{\lambda}\\
    &=[(1+\alpha)+\eta(\tilde{\beta}+s)]^2-4(\alpha+\eta\tilde{\beta}).
\end{align*}
After expanding the terms, we see that the inequality is equivalent to $0<4\eta s$, which is always true as $\eta > 0$. Consequently, $\lambda>1$ and in turn $\hat{\mathcal{X}}\subset \mathcal{A}_g^{\star}$. 

We have then checked the two conditions \ref{cond:a}-\ref{cond:b} above. This entails that the invariant set $\{z_1\in\R^{2d}: \lim_{k\rightarrow +\infty} g^k(z_1)\in\hat{\mathcal{X}}\}$ has Lebesgue measure zero. Equivalently, the set of initializations $x_0,x_1\in\R^d$ for which $x_k$ converges to a strict saddle point of $f$ has Lebesgue measure zero. 
%that the set of initializations $x_0,v_0\in\R^d$ for which $x_k$ converges (as $k\rightarrow +\infty$) to a strict saddle point of $f$ has Lebesgue measure zero. 
\end{enumerate}
\end{proof}

%%%%%%%%%%%%%%%%%%%%
\subsubsection{Convergence rate}
The following result provides the convergence rates for algorithm \eqref{isehd-disc} in the case where $f$ has the {\L}ojasiewicz property. The original idea of proof for descent-like algorithms can be found in \cite[Theorem~5]{attouchbolte}. 
\begin{theorem}\label{rates}
Consider the setting of Theorem~\ref{convisehddisc}, where $f$ also satisfies the {\L}ojasiewicz property with exponent $q\in [0,1[$. Then $x_k \to x_{\infty} \in \mathrm{crit}(f)$ as $k\rightarrow +\infty$ at the rates:
\begin{itemize}
\item If $q \in [0,\frac{1}{2}]$ then there exists $\rho\in ]0,1[$ such that
            \begin{equation}
                \Vert x_k-x_{\infty}\Vert=\mathcal{O}(\rho^k).
            \end{equation}
            
\item If $q \in ]\frac{1}{2},1[$ then
			\begin{equation}
            	\Vert x_k-x_{\infty}\Vert=\mathcal{O}\pa{k^{-\frac{1-q}{2q-1}}}.
            \end{equation}
\end{itemize}
\end{theorem}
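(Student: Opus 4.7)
The strategy mirrors the continuous-time case (Corollary~\ref{corver}) but replaces the ODE integration by the discrete telescoping argument originating in \cite{attouchbolte}. I would carry out the following steps.

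\textbf{Step 1 (Desingularizing function of the Lyapunov energy).} Since $E(x,v)=f(x)+\tfrac{C_1}{2}\Vert v\Vert^2$ in \eqref{eq:energEdisc} is a separable sum of $f$ and a quadratic, and a quadratic is Łojasiewicz with exponent $1/2$, the calculus rule \cite[Theorem~3.3]{calckl} implies that $E$ satisfies the Łojasiewicz property with exponent $q_E=\max(q,1/2)$ and desingularizing function $\psi_E(s)=c_E s^{1-q_E}$. In the neighborhood of $\mathfrak{C}((x_k))\times\{0_d\}$ identified in the proof of Theorem~\ref{convisehddisc}, the K{\L} inequality \eqref{kldiscapp} therefore reads
\[
\tilde V_k^{q_E}\leq c_E(1-q_E)\,\Vert\nabla E(x_k,v_k)\Vert,\qquad k\geq \tilde K.
\]

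\textbf{Step 2 (Recursion on the Lyapunov values).} Combining the gradient bound \eqref{eq:nablaEbnddisc} with the descent inequality \eqref{eq:Vklyap} (which gives $\Vert v_{k+1}\Vert^2\leq (\tilde V_k-\tilde V_{k+1})/\delta$ and $\Vert v_k\Vert^2\leq(\tilde V_{k-1}-\tilde V_k)/\delta$) I obtain
\[
\tilde V_k^{2q_E}\leq \kappa\bigl(\Vert v_{k+1}\Vert^2+\Vert v_k\Vert^2\bigr)\leq \kappa'\bigl(\tilde V_{k-1}-\tilde V_{k+1}\bigr)
\]
for a constant $\kappa'>0$. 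This is precisely the recursion handled by the Attouch–Bolte lemma \cite[Theorem~2]{attouchbolte}. When $q_E=1/2$ it yields $\tilde V_k=\mathcal O(\rho_0^k)$ for some $\rho_0\in]0,1[$, and when $q_E\in]1/2,1[$ it gives $\tilde V_k=\mathcal O\!\left(k^{-1/(2q_E-1)}\right)$.

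\textbf{Step 3 (From values to iterates).} Starting from the inequality $\Vert v_{k+1}\Vert^2\leq \delta_4(\Delta\psi)_k(\Vert v_k\Vert+\Vert v_{k+1}\Vert)$ obtained in the proof of Theorem~\ref{convisehddisc}, I apply Young's inequality in the form $\sqrt{ab}\leq a/4+b$ to get
\[
\tfrac{3}{4}\Vert v_{k+1}\Vert-\tfrac{1}{4}\Vert v_k\Vert\leq \delta_4\bigl(\psi_E(\tilde V_k)-\psi_E(\tilde V_{k+1})\bigr).
\]
Summing this telescopic inequality from $k=K$ to $N$ and letting $N\to\infty$ (using $\Vert v_k\Vert\to 0$ and the finite length established in Theorem~\ref{convisehddisc}\,(ii)) yields
\[
\Vert x_K-x_\infty\Vert\leq \sum_{k\geq K}\Vert v_{k+1}\Vert\leq C_1\,\psi_E(\tilde V_K)+C_2\Vert v_K\Vert=\mathcal O\bigl(\tilde V_K^{1-q_E}\bigr)+\mathcal O\bigl(\tilde V_{K-1}^{1/2}\bigr),
\]
where I used $\Vert v_K\Vert^2\leq(\tilde V_{K-1}-\tilde V_K)/\delta$. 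Because $1-q_E\leq 1/2$, the first term always dominates (up to constants) as $\tilde V_K\to 0$, so in both regimes $\Vert x_K-x_\infty\Vert=\mathcal O(\tilde V_K^{1-q_E})$.

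\textbf{Step 4 (Conclusion).} Plugging the rates of Step~2 into the bound of Step~3:
if $q\in[0,1/2]$ then $q_E=1/2$ and $\Vert x_k-x_\infty\Vert=\mathcal O(\rho_0^{k/2})=\mathcal O(\rho^k)$ with $\rho=\sqrt{\rho_0}\in]0,1[$; if $q\in]1/2,1[$ then $q_E=q$ and $\Vert x_k-x_\infty\Vert=\mathcal O\!\left(k^{-(1-q)/(2q-1)}\right)$, which are exactly the announced rates.

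The main technical obstacle is the passage from the square-root-type recursion on $\Vert v_{k+1}\Vert$ to a summable upper bound in terms of $\psi_E(\tilde V_K)$ alone: one must carefully absorb the $\Vert v_k\Vert$ term into the left-hand side via Young's inequality so as to produce a clean telescoping sum, and then verify that the residual $\Vert v_K\Vert$ term does not degrade the rate (it does not, thanks to $q_E\geq 1/2$). The remaining steps are standard consequences of the Łojasiewicz exponent calculus and the Attouch–Bolte recursion.
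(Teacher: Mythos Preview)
Your argument is correct but follows a genuinely different route from the paper's. The paper never establishes rates on $\tilde V_k$ separately; instead it works directly with the tail sums $\Delta_k=\sum_{p\geq k}\Vert v_{p+1}\Vert$. Starting from \eqref{estvk} it sums to get $\Delta_k\leq \frac{\lambda}{1-\lambda}(\Delta_{k-1}-\Delta_k)+\frac{Mc_E}{1-\lambda}\tilde V_k^{1-q_E}$, then uses the K{\L} inequality together with \eqref{eq:nablaEbnddisc} to bound $\tilde V_k^{1-q_E}\leq C(\Delta_{k-1}-\Delta_{k+1})^{(1-q_E)/q_E}$, obtaining a single closed recursion $\Delta_k^{q_E/(1-q_E)}\leq M_2(\Delta_{k-1}-\Delta_{k+1})$ on which the Attouch--Bolte case analysis is performed. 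By contrast, you decouple the analysis: first a recursion on the Lyapunov values $\tilde V_k^{2q_E}\leq \kappa'(\tilde V_{k-1}-\tilde V_{k+1})$ yielding rates on $\tilde V_k$, then a telescoping bound $\Vert x_K-x_\infty\Vert=\mathcal O(\psi_E(\tilde V_K))+\mathcal O(\Vert v_K\Vert)$ to transfer these rates to the iterates. Your approach is more modular (value rates are obtained independently of trajectory rates), while the paper's is more direct and avoids having to compare the two residual terms $\tilde V_K^{1-q_E}$ and $\tilde V_{K-1}^{1/2}$. One small imprecision: your sentence ``Because $1-q_E\leq 1/2$, the first term always dominates'' is not quite a proof, since the two terms involve $\tilde V_K$ and $\tilde V_{K-1}$ at different indices; the clean justification is to plug in the Step~2 rates for both terms and observe that $K^{-1/(2(2q_E-1))}$ decays at least as fast as $K^{-(1-q_E)/(2q_E-1)}$ precisely because $q_E\geq 1/2$.
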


\begin{proof}
Recall the function $E$ from \eqref{eq:energEdisc}. Since $f$ satisfies the {\L}ojasiewicz property with exponent $q\in [0,1[$, and $E$ is a separable quadratic perturbation of $f$, it follows from \cite[Theorem~3.3]{calckl} that $E$ has the {\L}ojasiewicz property with exponent $q_E = \max\pa{q,1/2} \in [1/2,1[$, \ie there exists $c_E>0$ such that the desingularizing function of $E$ is $\psi_E(s)=c_Es^{1-q_E}$. 

Let $v_k=x_k-x_{k-1}$ and $\Delta_k=\sum_{p=k}^{+\infty}\Vert v_{p+1}\Vert$. The triangle inequality yields $\Delta_k\geq \Vert x_k-x_{\infty}\Vert$ so it suffices to analyze the behavior of $\Delta_k$ to obtain convergence rates for the trajectory. Recall the constants $\delta,\delta_3,\delta_4>0$ and the sequences $\seq{(\Delta\psi)_k}$ and $\seq{\tilde{V}_k}$ defined in the proof of Theorem~\ref{convisehddisc}. Denote  $\lambda=\frac{\varepsilon}{\sqrt{2}-\varepsilon}\in (0,1)$ and $M=\frac{\delta_4}{\varepsilon(\sqrt{2}-\varepsilon)}$  for $0<\varepsilon<\frac{\sqrt{2}}{2}$. Using \eqref{estvk}, we have that there exists $\tilde{K}\in\N$ large enough such that for all $k\geq \tilde{K}$ 
\[
\Vert v_{k+1}\Vert\leq \lambda \Vert v_k\Vert+M(\Delta\psi)_k.
\]
Recall that $q_E \in [\frac{1}{2},1[$ (so $\frac{1-q_E}{q_E}\leq 1$) and that $\lim_{k\rightarrow +\infty} \tilde{V}_k=0$. We obtain by induction that for all $k\geq \tilde{K}$
\begin{equation}
      \sum_{p=k}^{+\infty} \Vert v_{p+1}\Vert\leq \frac{\lambda}{1-\lambda}\Vert v_{k}\Vert+\frac{Mc_E}{1-\lambda}\tilde{V}_k^{1-q_E}.
    \end{equation}
    Or equivalently, \begin{equation}\label{impp}
        \Delta_k\leq \frac{\lambda}{1-\lambda}(\Delta_{k-1}-\Delta_k)+\frac{Mc_E}{1-\lambda}\tilde{V}_k^{1-q_E} .
    \end{equation}
    
   % Recalling that $\psi(s)=c_1s^{1-q_E}$, $q_E\in ]\frac{1}{2},1)$ (so $\frac{1-q_E}{q_E}\leq 1$) and that $\tilde{V}_k\geq 0$, we have that in this case:\begin{align*}    
   % (\Delta \psi)_{k}&=c_1((\tilde{V}_k)^{1-q_E}-(\tilde{V}_{k+1})^{1-q_E})\\
    %&\leq c_1 (\tilde{V}_k)^{1-q_E}\\
    %\end{align*}
    
Denoting $c_2=(c_E(1-q_E))^{\frac{1-q_E}{q_E}}$, then by \eqref{kldiscapp} and \eqref{eq:nablaEbnddisc}
\begin{align*}
   \tilde{V}_k^{1-q_E} &\leq c_2\norm{\nabla E\left(x_k,v_k\right)}^{\frac{1-q_E}{q_E}}\\
    &\leq c_2\delta_3^{\frac{1-q_E}{q_E}}(\Vert v_k\Vert+\Vert v_{k+1}\Vert)^{\frac{1-q_E}{q_E}}\\
    &\leq  c_2\delta_3^{\frac{1-q_E}{q_E}}(\Delta_{k-1}-\Delta_{k}+\Delta_{k}-\Delta_{k+1})^{\frac{1-q_E}{q_E}} \\
    &=  c_2\delta_3^{\frac{1-q_E}{q_E}}(\Delta_{k-1}-\Delta_{k+1})^{\frac{1-q_E}{q_E}}.
    \end{align*}
Plugging this into \eqref{impp}, and using that $\Delta_k \rightarrow 0$ and $\frac{1-q_E}{q_E}\leq 1$, then there exists and integer $\tilde{K}_1 \geq \tilde{K}$ such that for all $k \geq \tilde{K}_1$ 
    \begin{equation*}
        \Delta_k\leq \frac{\lambda}{1-\lambda}(\Delta_{k-1}-\Delta_k)^{\frac{1-q_E}{q_E}}+\frac{M_1}{1-\lambda}(\Delta_{k-1}-\Delta_{k+1})^{\frac{1-q_E}{q_E}} ,
    \end{equation*}
    where $M_1=c_Ec_2\delta_3^{\frac{1-q_E}{q_E}} M$.
    Taking the power $\frac{q_E}{1-q_E}\geq 1$ on both sides and using the fact that $\Delta_{k+1}\leq\Delta_k$, we have for all $k\geq \tilde{K}_1$
     \begin{equation}\label{estimate}
        \Delta_k^{\frac{q_E}{1-q_E}}\leq M_2(\Delta_{k-1}-\Delta_{k+1}) ,
    \end{equation}
    where we set $M_2=(1-\lambda)^{-\frac{q_E}{1-q_E}}\max(\lambda,M_1)^{\frac{q_E}{1-q_E}}$, 
    We now distinguish two cases: 
        \begin{itemize}
            \item $q \in [0,1/2]$, hence $q_E=\frac{1}{2}$: \eqref{estimate} then becomes \begin{equation*}
        \Delta_k\leq M_2(\Delta_{k-1}-\Delta_{k+1}),
    \end{equation*}
    with $M_2=(1-\lambda)^{-1}\max(\lambda,M_1)$ and $M_1=\frac{c_E^2}{2}\frac{\delta_3^2}{\delta}$. Using again that $\Delta_{k+1}\leq\Delta_{k}$, we obtain that for $k\geq \tilde{K}_1$  
    \begin{equation*}
        \Delta_{k}\leq \frac{M_2}{1+M_2}\Delta_{k-2},
    \end{equation*}
    which implies 
    \begin{equation*}
        \Delta_k\leq \left(\frac{M_2}{1+M_2}\right)^{\frac{k-\tilde{K}_1}{2}}\Delta_{\tilde{K}_1}=\mathcal{O}(\rho^{k}),
    \end{equation*}
    for $\rho\eqdef \left(\frac{M_2}{1+M_2}\right)^{\frac{1}{2}}\in ]0,1[$.
      
    %\todo{Another idea is to use Taylor around $h_0=\frac{c}{L}-\beta$ to say that $$h^3\approx h_0^3-3h_0^2h+3h_0h^2.$$
    %And solve $$(a_1+3a_0h_0)h^2+(a_2-3a_0h_0^2)h+(a_3+a_0h_0^3)=0.$$
    %}
    \item $q \in ]\frac{1}{2},1[$, hence $q_E=q$: we define the function $h:\R_+^*\rightarrow\R$ by $h(s)=s^{-\frac{q}{1-q}}$. Let $R>1$. Assume first that $h(\Delta_{k})\leq Rh(\Delta_{k-1})$. Then from \eqref{estimate}, we get
    \begin{align*}
        1&\leq M_2 (\Delta_{k-1}-\Delta_{k+1})h(\Delta_k)\\
        &\leq RM_2 (\Delta_{k-1}-\Delta_{k+1})h(\Delta_{k-1})\\
            &\leq RM_2 \int_{\Delta_{k+1}}^{\Delta_{k-1}} h(s)ds\\
            &\leq RM_2\frac{1-q}{1-2q}\pa{\Delta_{k-1}^{\frac{1-2q}{1-q}}-\Delta_{k+1}^{\frac{1-2q}{1-q}}} .
        \end{align*}
    Setting $\nu=\frac{2q-1}{1-q}>0$ and $M_3=\frac{\nu}{RM_2}>0,$ one obtains 
    \begin{equation}\label{eq:Deltaknucase1}
    0<M_3\leq \Delta_{k+1}^{-\nu}-\Delta_{k-1}^{-\nu}.
    \end{equation}
    Now assume that $h(\Delta_k)>Rh(\Delta_{k-1})$. Since $h$ is decreasing and $\Delta_{k+1}\leq \Delta_k$, then $h(\Delta_{k+1}) > Rh(\Delta_{k-1})$. Set $q=R^{\frac{2q-1}{q}}> 1$, we directly have that 
    \[
    \Delta_{k+1}^{-\nu}>q\Delta_{k-1}^{-\nu}.
    \]
    Since $q-1>0$ and $\Delta_k^{-\nu}\rightarrow+\infty$ as $k\rightarrow +\infty$, there exists $M_4>0$ and a large enough integer $\tilde{K}_2 \geq \tilde{K}_1$ such that for every $k\geq \tilde{K}_2$ that satisfies our assumption ($h(\Delta_k)>Rh(\Delta_{k-1})$), we have
    \begin{equation}\label{eq:Deltaknucase2}
    0<M_4\leq \Delta_{k+1}^{-\nu}-\Delta_{k-1}^{-\nu}.
    \end{equation}
    Taking $M_5=\min(M_3,M_4)$, \eqref{eq:Deltaknucase1} and \eqref{eq:Deltaknucase2} show that for all $k\geq \tilde{K}_2$
    \[
    0<M_5\leq  \Delta_{k+1}^{-\nu}-\Delta_{k-1}^{-\nu} .
    \]
     Summing both sides from $\tilde{K}_2$ up to $K-1 \geq \tilde{K}_2$, we obtain 
     \[
     M_5(K-\tilde{K}_2)\leq \Delta_{K}^{-\nu}-\Delta_{\tilde{K}_2}^{-\nu}+\Delta_{K-1}^{-\nu}-\Delta_{\tilde{K}_2-1}^{-\nu}\leq 2(\Delta_{K}^{-\nu}-\Delta_{\tilde{K}_2-1}^{-\nu}).
     \]
    Therefore  \begin{equation}
        \Delta_{K}^{-\nu}\geq \Delta_{\tilde{K}_2-1}^{-\nu}+\frac{M_5}{2}(K-\tilde{K}_2).
    \end{equation}
    Inverting, we get 
    \begin{equation}
        \Delta_{K}\leq \left[\Delta_{\tilde{K}_2-1}^{-\nu}+\frac{M_5}{2}(K-(\tilde{K}_2+1))\right]^{-\frac{1}{\nu}}=\mathcal{O}(K^{-\frac{1}{\nu}}).
    \end{equation}
     \end{itemize}
\end{proof}

%%%%%%%%%%%%%%%%%%%%
\subsubsection{General coefficients}
The discrete scheme \eqref{isehd-disc} opens the question of whether we can consider $\alpha_k, \beta_k, s_k$ to be independent. Though this would omit the fact they arise from a discretization of the continuous-time dynamic \eqref{eq:isehd}, hence ignoring its physical interpretation, it will gives us a more flexible choice of these parameters while preserving the desired convergence behavior.

\begin{theorem}\label{convisehddiscgencoeff}
    Let $f:\R^d\rightarrow\R$ be satisfying \eqref{H0} with $\nabla f$ being globally $L$-Lipschitz-continuous. Consider $(\alpha_k)_{k\in\N},(\beta_k)_{k\in\N},(s_k)_{k\in\N}$ to be three positive sequences, and the following algorithm with $x_0,x_1\in\R^d$:
        \begin{equation}\label{isehd-disc1}
        \begin{aligned}
            \begin{cases}
                y_k&=x_k+\alpha_k(x_k-x_{k-1})-\beta_k(\nabla f(x_k)-\nabla f(x_{k-1})),\\
                x_{k+1}&=y_k-s_k\nabla f(x_k).
            \end{cases}
        \end{aligned}
    \end{equation}
    If there exists $\bar{s}$ such that: 
    \begin{itemize}
        \item $0 < \inf_{k\in\N} s_k \leq \sup_{k\in\N} s_k \leq \bar{s} < \frac{2}{L}$;
%       \item $\bar{s}<\frac{2}{L}$;
        \item $0<\sup_{k\in\N}\left(\frac{\alpha_k+\beta_kL}{s_k}\right) < \frac{1}{\bar{s}}-\frac{L}{2}$.
        %\item $\Lambda=\frac{\alpha_k+\beta_k L}{s_k}$, and $\Lambda<\frac{1}{\bar{s}}-\frac{L}{2}$.
    \end{itemize}
       Then the following holds:
       \begin{enumerate}[label=(\roman*)]
           \item $(\Vert\nabla f(x_k)\Vert)_{k\in\N}\in\ell^2(\N)$, and $(\Vert x_{k+1}-x_k\Vert)_{k\in\N}\in \ell^2(\N)$, and thus 
           \[
           \lim_{k\rightarrow +\infty} \Vert\nabla f(x_k)\Vert=0.
           \]
           \item Moreover, if $\seq{x_k}$ is bounded and $f$ is definable, then $\seq{\norm{x_{k+1}-x_{k}}}\in\ell^1(\N)$ and $x_k$ converges (as $k\rightarrow +\infty$) to a critical point of $f$.
           \item Furthermore, if $\alpha_k\equiv \alpha,\beta_k\equiv \beta,s_k\equiv s$, then the previous conditions reduce to 
           \[
           \alpha+\beta L+\frac{sL}{2}<1.
           \] 
           If, in addition, $\alpha\neq \frac{\beta}{\beta+s}$, and $\alpha> \beta L$, then for almost all $x_0, x_1\in\R^d$, $x_k$ converges (as $k\rightarrow +\infty$) to a critical point of $f$ that is not a strict saddle. Consequently, if $f$ satisfies the strict saddle property, for almost all $x_0, x_1\in\R^d$, $x_k$ converges (as $k\rightarrow +\infty$) to a local minimum of $f$.
       \end{enumerate}
\end{theorem}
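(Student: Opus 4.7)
The plan is to follow closely the structure of Theorem~\ref{convisehddisc}, adapting each step to the more flexible coefficient regime. The coupling $\beta_k = \beta h\alpha_k$, $s_k = h^2\alpha_k$ was used in the original proof only to (a) pin down the constant $C_1$ in the Lyapunov function and the descent slack $\delta$, and (b) drive the algebraic computation for the trap avoidance analysis. I would show that the two hypotheses stated here play exactly these two roles.

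For part (i), I would start from the optimality of $x_{k+1}$ as the minimizer of $\frac{1}{2}\norm{x - (y_k - s_k\nabla f(x_k))}^2$, which, after expanding, yields
\begin{equation*}
\langle\nabla f(x_k), v_{k+1}\rangle \leq -\frac{\norm{v_{k+1}}^2}{s_k} + \frac{\alpha_k}{s_k}\langle v_{k+1}, v_k\rangle - \frac{\beta_k}{s_k}\langle v_{k+1}, \nabla f(x_k)-\nabla f(x_{k-1})\rangle,
\end{equation*}
where $v_k = x_k - x_{k-1}$. Combining with the descent lemma for $L$-smooth $f$ and bounding the cross terms by $L$-Lipschitzness of $\nabla f$ together with Young's inequality (using equal weights) gives
\begin{equation*}
f(x_{k+1}) - f(x_k) \leq \pa{\frac{L}{2} - \frac{1}{s_k} + \frac{\alpha_k+\beta_k L}{2s_k}}\norm{v_{k+1}}^2 + \frac{\alpha_k+\beta_k L}{2s_k}\norm{v_k}^2.
\end{equation*}
Setting $\mu \eqdef \sup_{k\in\N}\frac{\alpha_k+\beta_k L}{s_k}$ (finite by the second hypothesis) and $V_k \eqdef f(x_k) + \frac{\mu}{2}\norm{v_k}^2$, using $s_k \leq \bar{s}$, one obtains $V_{k+1} - V_k \leq -\delta\norm{v_{k+1}}^2$ with $\delta = \frac{1}{\bar{s}} - \frac{L}{2} - \mu > 0$ exactly by the second hypothesis. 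Square-summability of $\norm{v_k}$ and then of $\norm{\nabla f(x_k)}$ follows as in Theorem~\ref{convisehddisc}(i), using $\inf_k s_k > 0$ to bound $\norm{\nabla f(x_k)} \leq \frac{1}{\inf_k s_k}\norm{v_{k+1}} + \mu\norm{v_k}$. For part (ii), the Lyapunov function $E(x,v) = f(x) + \frac{\mu}{2}\norm{v}^2$ is definable, and the KL-based convergence-of-iterates argument of Theorem~\ref{convisehddisc}(ii) carries over verbatim.

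For part (iii), the two hypotheses collapse (with $\bar{s}=s$) into $\alpha + \beta L + sL/2 < 1$. For trap avoidance I would apply \cite[Corollary~1]{localminima} to the dynamical system $g:(x_+,x_-) \mapsto ((1+\alpha)x_+ - \alpha x_- - (\beta+s)\nabla f(x_+) + \beta\nabla f(x_-),\, x_+)$. Lemma~\ref{block} gives $\det J_g(x_+,x_-) = \det(\alpha I_d - \beta\nabla^2 f(x_-))$, which is nonzero since $\alpha > \beta L$ places all eigenvalues of $\beta\nabla^2 f(x_-)$ below $\alpha$ in absolute value. At a strict saddle $x$ with Hessian eigenvalue $-\eta < 0$, Lemma~\ref{block} again reduces the characteristic equation of $J_g(x,x)$ to $\lambda^2 - \lambda((1+\alpha)+\eta(\beta+s)) + (\alpha+\eta\beta) = 0$, after excluding the spurious root $\lambda = \beta/(\beta+s)$ which is ruled out precisely by $\alpha \neq \beta/(\beta+s)$. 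Viewing the discriminant $\Delta_\lambda$ as a quadratic in $\alpha$, its own discriminant equals $-16\eta s < 0$, so $\Delta_\lambda > 0$; the key algebraic identity is then $\Delta_\lambda - ((1-\alpha)-\eta(\beta+s))^2 = 4\eta s > 0$, which makes the larger root of the characteristic equation strictly greater than $1$, placing every strict saddle in $\mathcal{A}_g^{\star}$. The main obstacle is ensuring the $k$-dependence of the coefficients is cleanly absorbed into uniform constants $\mu$, $\inf_k s_k$, and $\bar{s}$, and verifying that the decoupling of $\beta$ and $s$ from $h$ does not spoil the algebraic identity $\Delta_\lambda - ((1-\alpha)-\eta(\beta+s))^2 = 4\eta s$; once that is checked, the remainder is a transcription of the proofs of Theorem~\ref{convisehddisc}(i)--(iii).
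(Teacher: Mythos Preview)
Your proposal is correct and takes essentially the same approach as the paper: the paper's proof simply says to adjust equation~\eqref{eq2} so as not to use the dependent forms of $\alpha_k,\beta_k,s_k$ and then proceed analogously to Theorem~\ref{convisehddisc}, which is exactly what you have done. Your equal-weight Young inequality leading to the Lyapunov slack $\delta=\frac{1}{\bar s}-\frac{L}{2}-\mu$ with $\mu=\sup_k\frac{\alpha_k+\beta_kL}{s_k}$ is a mild streamlining of the $\varepsilon,\varepsilon'$ optimization in Theorem~\ref{convisehddisc}, and your part~(iii) reproduces verbatim the Jacobian and discriminant computations there with $\tilde\beta,s$ replaced by the independent constants $\beta,s$.
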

%\begin{remark}
%    If $\alpha_k\equiv \alpha,\beta_k\equiv \beta,s_k\equiv s$, then the conditions to verify in the previous Theorem reduce to $$\alpha+\beta L+\frac{sL}{2}<1.$$
%\end{remark}
\begin{remark}
If $\alpha_k,\beta_k,s_k$ are given as in \eqref{isehd-disc}, \ie $\alpha_k=\frac{1}{1+\gamma_k h},\beta_k=\beta h\alpha_k, s_k=h^2 \alpha_k$, then the requirements of Theorem~\ref{convisehddiscgencoeff} reduce to $\beta+\frac{h}{2}<\frac{c}{L}$ (recall that $c$ is such that $c\leq\gamma_k$).
\end{remark}

\begin{proof}
Adjusting equation \eqref{eq2} to this setting, \ie not using the dependent explicit forms of $\alpha_k,\beta_k, s_k$, we get an analogous proof to the one of Theorem~\ref{convisehddisc}. We omit the details for the sake of brevity.
\end{proof}
%\todo{Linear Convergence rates under writing it in form $z_{k}=(x_{k}-x^{\star},x_{k-1}-x^{\star})$ and show that $z_{k+1}=Az_k+o(|z_k|)$}

%%%%%%%%%%%%%%%%%%%%%%%%%%%%%%%%%%%%%%%%%%%%%%%%%%%%%%
\section{Inertial System with Implicit Hessian Damping}\label{sec:imp}
%%%%%%%%%%%%%%%%%%%%%%%%%%%%%%%%%%%%%%%%%%%%%%%%%%%%%%

%%%%%%%%%%%%%%%%%%%%%%%%%%%%%%
\subsection{Continuous-time dynamics}
We now turn to the second-order system with implicit Hessian damping as stated in \eqref{eq:isihd}, where we consider a constant geometric damping, \ie $\beta(t) \equiv \beta>0$.
%\begin{equation}\label{H1}\tag{$\mathrm{H}_1$}
%    \begin{cases}
%        f\in C^1(\R^d);\\
%        \nabla f \text{ is locally Lipschitz};\\
%        %\mathrm{crit}(f)\neq\emptyset;\\
%        \inf f> -\infty.
%    \end{cases}
%\end{equation}
%And $\gamma:\R_+\rightarrow\R_+$ satisfying \eqref{gamma}.
% We consider the ODE \eqref{eq:isihd} with $\beta(t) \equiv \beta>0$.
%we consider the Inertial System with Implicit Hessian Damping:
%\begin{equation}\label{ISIHD}\tag{$\mathrm{ISIHD}$}
%    \begin{aligned}
%    \begin{cases}
%    \ddot{x}(t)+\gamma(t)\dot{x}(t)+\nabla f(x(t)+\beta\dot{x}(t))=0.\\
%    x(0)=x_0, \dot{x}(0)=v_0.
%    \end{cases}
%    \end{aligned}
%\end{equation}
We will use the following equivalent reformulation of \eqref{eq:isihd} proposed in \cite{hessianpert}. We will say that $x$ is a solution trajectory of \eqref{eq:isihd} with initial conditions $x(0)=x_0, \dot{x}(0)=v_0$, if and only if, $x\in C^2(\R_+;\R^d)$ and there exists $y\in C^1(\R_+;\R^d)$ such that $(x,y)$ satisfies: 
    \begin{equation}\label{refor2}
        \begin{cases}
            \dot{x}(t)+\frac{x(t)-y(t)}{\beta}&=0,\\
            \dot{y}(t)+\beta\nabla f(y(t))+\left(\frac{1}{\beta}-\gamma(t)\right)(x(t)-y(t))&=0,
        \end{cases}
    \end{equation}
    with initial conditions $x(0)=x_0, y(0)=y_0\eqdef x_0+\beta v_0.$\\

%%%%%%%%%%%%%%%%%%%%    
\subsubsection{Global convergence of the trajectory}
%In the following main theorem, we will show conditions to ensure existence of a global solution, the convergence to zero of the gradient evaluated at the solution, and then the convergence of the solution to a critical point of the objective function. 
Our next main result is the following theorem, which is the implicit counterpart of Theorem~\ref{convisehd}.
\begin{theorem}\label{convisihd}
    Let $0<\beta<\frac{2c}{C^2}$, $f:\R^d\rightarrow\R$ satisfying \eqref{H0}, $\gamma$ is continuous and satisfies \eqref{gamma}.

    Consider \eqref{eq:isihd} in this setting, then the following holds:
    
    \begin{enumerate}[label=(\roman*)]
        \item There exists a global solution trajectory $x:\R_+\rightarrow\R^d$ of \eqref{eq:isihd}. 
        \item We have that $\dot{x}\in\Lp^2(\R_+;\R^d)$, and $\nabla f\circ (x+\beta\dot{x})\in\Lp^2(\R_+;\R^d)$.
        \item \label{tresi} If we suppose that the solution trajectory $x$ is bounded over $\R_+$, then $\nabla f\circ x\in\Lp^2(\R_+;\R^d)$, $$\lim_{t\rightarrow +\infty}\Vert\nabla f(x(t))\Vert=\lim_{t\rightarrow +\infty}\Vert \dot{x}(t)\Vert=0,$$ and $\lim_{t\rightarrow +\infty} f(x(t))$ exists.
        \item In addition to \ref{tresi}, if we also assume that $f$ is definable, then $\dot{x}\in\Lp^1(\R_+;\R^d)$ and $x(t)$ converges (as $t\rightarrow +\infty$) to a critical point of $f$.
   \end{enumerate}
\end{theorem}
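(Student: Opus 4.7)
The proof will parallel the structure of Theorem~\ref{convisehd}, with the key new ingredient being the choice of Lyapunov function adapted to the implicit Hessian damping. The natural candidate is
\[
V(t) = f(y(t)) + \tfrac{1}{2}\Vert\dot{x}(t)\Vert^2, \qquad y(t) = x(t) + \beta\dot{x}(t) .
\]
Differentiating along trajectories of \eqref{eq:isihd}, and substituting $\ddot{x} = -\gamma(t)\dot{x} - \nabla f(y)$ and $\dot{y} = (1-\beta\gamma(t))\dot{x} - \beta\nabla f(y)$, the cross terms $\pm\langle \nabla f(y),\dot{x}\rangle$ cancel and one obtains
\[
V'(t) = -\beta\Vert\nabla f(y(t))\Vert^2 - \gamma(t)\Vert\dot{x}(t)\Vert^2 - \beta\gamma(t)\langle \nabla f(y(t)), \dot{x}(t)\rangle .
\]
Bounding the remaining cross term by Young's inequality with the optimal parameter $\varepsilon=\beta$ and using \eqref{gamma} yields
\[
V'(t) \leq -\delta_1\bigl(\Vert\nabla f(y(t))\Vert^2 + \Vert\dot{x}(t)\Vert^2\bigr), \qquad \delta_1 = \min\pa{\tfrac{\beta}{2},\, c - \tfrac{\beta C^2}{2}} > 0,
\]
where positivity of $\delta_1$ is precisely the content of the hypothesis $\beta < 2c/C^2$.

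For \ref{uno} and \ref{dos}, I recast \eqref{eq:isihd} as the first-order system \eqref{refor2} in $(x,y)$. As $\gamma$ is continuous and $\nabla f$ is locally Lipschitz (from $f\in C^2(\R^d)$), Cauchy-Lipschitz provides a unique maximal solution on $[0,T_{\max}[$. Since $\inf f > -\infty$, $V$ is bounded below on $[0,T_{\max}[$; integrating the Lyapunov inequality then gives $\dot{x}, \nabla f\circ y \in \Lp^2([0,T_{\max}[;\R^d)$, hence $\dot{x}\in\Lp^1$ locally and both $x$ and $y$ admit limits at $T_{\max}$. Arguing exactly as in Theorem~\ref{convisehd}\ref{uno} this contradicts maximality if $T_{\max}<+\infty$, yielding \ref{uno}, and \ref{dos} follows by integrating on all of $\R_+$.

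For \ref{tresi}, boundedness of $x$ combined with $\dot x\in\Lp^2$ and the ODE forces $\dot x\in \Lp^\infty$, hence $y=x+\beta\dot x$ is bounded as well. On the bounded set containing $x(\R_+)\cup y(\R_+)$, $\nabla f$ is Lipschitz with some constant $L_0$, so
\[
\Vert\nabla f(x(t))\Vert \leq \Vert\nabla f(y(t))\Vert + L_0\beta\Vert\dot{x}(t)\Vert ,
\]
giving $\nabla f\circ x \in \Lp^2(\R_+;\R^d)$. Uniform continuity of $t\mapsto \nabla f(y(t))$ (via $\tfrac{d}{dt}\nabla f(y)=\nabla^2 f(y)\dot{y}\in \Lp^\infty$) together with $\nabla f\circ y\in\Lp^2$ gives $\nabla f(y(t))\to 0$ by the standard Barbalat-type argument, and the Lipschitz bound then transfers this to $\nabla f(x(t))\to 0$. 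Boundedness of $\ddot{x}$ makes $\dot{x}$ uniformly continuous, and $\dot{x}\in\Lp^2$ yields $\dot{x}(t)\to 0$. Monotonicity of $V$ and lower-boundedness give a limit $\tilde L$, and $\dot x \to 0$ together with $f(y) - f(x) \to 0$ (by continuity of $f$ and $y-x=\beta\dot x\to 0$) gives $\lim_{t\to+\infty}f(x(t)) = \tilde L$.

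For the last item I introduce the extended energy $E(y,v)=f(y)+\tfrac12\Vert v\Vert^2$, which is definable (sum of a definable and an algebraic function) and satisfies $V(t)=E(y(t),\dot{x}(t))$. Since $\dot{x}(t)\to 0$ and $y(t)-x(t)\to 0$, the cluster set of $(y(\cdot),\dot{x}(\cdot))$ is $\mathfrak{C}(x(\cdot))\times\{0_d\}\subset\mathrm{crit}(E)$ on which $E\equiv\tilde L$; applying the uniformized K{\L} property \cite[Lemma~6]{proximal} exactly as in Theorem~\ref{convisehd}\ref{convx}, combined with the elementary bound $\Vert\nabla E(y,v)\Vert^2\leq 2(\Vert\nabla f(y)\Vert^2+\Vert v\Vert^2)$ and the Lyapunov inequality, yields
\[
-\tfrac{d}{dt}\psi\bigl(V(t)-\tilde L\bigr) \geq \delta_3\sqrt{\Vert\nabla f(y(t))\Vert^2+\Vert\dot{x}(t)\Vert^2} \geq \delta_3\Vert\dot{x}(t)\Vert
\]
for all $t$ large enough and some $\delta_3>0$. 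Integrating gives $\dot{x}\in\Lp^1(\R_+;\R^d)$, so $x(\cdot)$ is Cauchy and converges to a limit, which is a critical point of $f$ by \ref{tresi}. The principal obstacle, distinguishing this from the explicit case, is that the Lyapunov inequality controls $\nabla f(y)$ rather than $\nabla f(x)$, and that the K{\L} analysis is naturally performed on $E(y,\dot{x})$; bridging back to convergence of $x(\cdot)$ itself relies crucially on boundedness of the trajectory (for local Lipschitz continuity of $\nabla f$) and on the fact that $y-x=\beta\dot{x}$ becomes $\Lp^1$ only after the K{\L} argument is closed.
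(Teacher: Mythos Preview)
Your proof is correct and follows essentially the same route as the paper's. The only cosmetic differences are (a) your Young parameter $\varepsilon=\beta$ gives $\delta_1=\min(\beta/2,\,c-\beta C^2/2)$ whereas the paper takes $\varepsilon=\beta^2C^2/c$ to get $\delta_1=\min(c/2,\,\beta(1-\beta C^2/2c))$, both strictly positive under the hypothesis; and (b) you run the K\L\ argument on the two-variable energy $E(y,v)=f(y)+\tfrac12\|v\|^2$ while the paper uses the three-variable $E(x,v,w)=f(x+v)+\tfrac12\|w\|^2$ --- your choice is in fact slightly cleaner and yields exactly the same gradient bound $\|\nabla E\|^2\le 2(\|\nabla f(y)\|^2+\|\dot x\|^2)$.
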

\begin{proof}
\begin{enumerate}[label=(\roman*)]
    \item 
We will start by showing the existence of a solution. Setting $Z=(x,y)$, \eqref{refor2} can be equivalently written as: \begin{equation}\label{refor3}
        \dot{Z}(t)+\nabla\mathcal{G}(Z(t))+\mathcal{D}(t,Z(t))=0, \quad Z(0)=(x_0,y_0),
    \end{equation}
    where $\mathcal{G}(Z):\R^d\times \R^d\rightarrow\R$ is the function defined by $\mathcal{G}(Z)=\beta f(y)$ and the time-dependent operator $\mathcal{D}:\R_+\times\R^d\times\R^d\rightarrow\R^d\times\R^d$ is given by:
    $$\mathcal{D}(t,Z)=\left(\frac{x-y}{\beta},\left(\frac{1}{\beta}-\gamma(t)\right)(x-y)\right).$$
    
    Since the map $(t,Z)\mapsto\nabla \mathcal{G}(Z)+\mathcal{D}(t,Z)$ is continuous in the first variable and locally Lipschitz in the second (by \eqref{H0} and the assumptions on $\gamma$),we get from Cauchy-Lipschitz theorem that there exists $T_{\max} > 0$ and a unique maximal solution of \eqref{refor1} denoted $Z\in C^1([0,T_{\max}[;\R^d\times\R^d)$. Consequently, there exists a unique maximal solution of \eqref{eq:isihd} $x\in C^2([0,T_{\max}[;\R^d)$. \\

Let us consider the \tcb{energy} function $V:[0,T_{\max}[\rightarrow\R$ defined by 
\[
V(t)=f(x(t)+\beta\dot{x}(t))+\frac{1}{2}\Vert \dot{x}(t)\Vert^2. 
\]
Proceeding as in the proof of Theorem~\ref{convisehd}, we prove it is indeed a Lyapunov function for \eqref{eq:isihd}. Denoting $\delta_1\eqdef \min\pa{\frac{c}{2},\beta\left(1-\frac{\beta C^2}{2c}\right)}>0$, we have
\begin{equation}\label{eqq2}
    {V}'(t)\leq -\delta_1(\Vert \dot{x}(t)\Vert^2+\Vert \nabla f(x(t)+\beta\dot{x}(t))\Vert^2).
\end{equation}
We will now show that the maximal solution $Z$ of \eqref{refor3} is actually global. For this, we argue by contradiction and assume that $T_{\max} < +\infty$. It is sufficient to prove that $x$ and $y$ have a limit as $t \to T_{\max}$, and local existence will contradict the maximality of $T_{\max}$. Integrating  \eqref{eqq2}, we obtain $\dot{x}\in\Lp^2([0,T_{\max}[;\R^d)$ and $\nabla f\circ (x+\beta\dot{x})\in\Lp^2([0,T_{\max}[;\R^d)$, which entails that $\dot{x}\in\Lp^1([0, \tcb{T_{\max}}[;\R^d)$ and $\nabla f\circ (x+\beta\dot{x})\in\Lp^1([0,T_{\max}[;\R^d)$, and in turn $\pa{x(t)}_{t\in [0,T_{\max}[}$ satisfies the Cauchy property and $\lim_{t\rightarrow T_{\max}} x(t)$ exists. Besides, by the first equation of \eqref{refor2}, we will have that $\lim_{t\rightarrow T_{\max}} y(t)$ will exist if both $\lim_{t\rightarrow T_{\max}} x(t)$ and $\lim_{t\rightarrow T_{\max}} \dot{x}(t)$ exist. So we just have to check the existence of the second limit. A sufficient condition would be to prove that $\ddot{x}\in\Lp^1([0,T_{\max}[;\R^d)$. By \eqref{eq:isihd} this will hold if $\dot{x},\nabla f\circ (x+\beta\dot{x})$ are in $\Lp^1([0,T_{\max}[;\R^d)$. But we have already shown these claims. Consequently, the solution $Z$ of \eqref{refor3} is global, and thus the solution $x$ of \eqref{eq:isihd} is also global.\\

\item Integrating \eqref{eqq2}, using that $V$ is well-defined and bounded from below, we get that $\dot{x}\in\Lp^2(\R_+;\R^d)$, and $\nabla f(x(t)+\beta \dot{x}(t))\in\Lp^2(\R_+;\R^d)$. 

\item By assumption, $\sup_{t > 0}\Vert x(t)\Vert<+\infty$. Moreover, since $\dot{x} \in \Lp^2(\R_+;\R^d)$ and continuous, $\dot{x}\in\Lp^{\infty}(\R_+;\R^d)$ and then using that $\nabla f$ is locally Lipschitz, we have
\begin{align*}
\int_{0}^{+\infty}\Vert\nabla f(x(t))\Vert^2 dt
&\leq 2 \int_{0}^{+\infty}\Vert\nabla f(x(t)+\beta \dot{x}(t))-\nabla f(x(t))\Vert^2 dt\\
&+2\int_{0}^{+\infty}\Vert\nabla f(x(t)+\beta\dot{x}(t))\Vert^2 dt\\
&\leq 2\beta^2L_0^2\int_{0}^{+\infty}\Vert\dot{x}(t)\Vert^2 dt+2\int_{0}^{+\infty}\Vert\nabla f(x(t)+\beta\dot{x}(t))\Vert^2 dt<+\infty ,
\end{align*}
where $L_0$ is the Lipschitz constant of $\nabla f$ on the centered ball of radius $$\sup_{t > 0}\norm{x(t)} + \beta\sup_{t > 0}\norm{\dot{x}(t)} < +\infty.$$ Moreover, for every $t,s \geq 0$,
\[
\norm{\nabla f(x(t)) - \nabla f(x(s))} \leq L_0 \sup_{\tau  \geq 0} \norm{\dot{x}(\tau)}|t-s| .
\]
This combined with $\nabla f\circ x\in\Lp^2(\R_+;\R^d)$ yields 
\[
\lim_{t\rightarrow +\infty}\norm{\nabla f(x(t))}=0 .
\]
We also have that 
\begin{align*}
    \sup_{t > 0}\Vert \nabla f(x(t)+\beta\dot{x}(t))\Vert
    &\leq \sup_{t > 0}(\Vert\nabla f(x(t)+\beta\dot{x}(t))-\nabla f(0) \Vert)+\Vert\nabla f(0)\Vert\\
    &\leq L_0\sup_{t > 0}\Vert x(t)\Vert + L_0 \beta\sup_{t > 0}\Vert \dot{x}(t)\Vert+\Vert\nabla f(0)\Vert<+\infty.
\end{align*}
Therefore, in view of \eqref{eq:isihd}, we get that $\ddot{x}\in \Lp^{\infty}(\R_+;\R^d)$. This implies that 
\[
\norm{\dot{x}(t) - \dot{x}(s)} \leq \sup_{\tau  \geq 0}\norm{\ddot{x}(\tau)}|t-s| .
\]
Combining this with $\dot{x}\in\Lp^2(\R_+;\R^d)$ gives that $\lim_{t\rightarrow +\infty} \norm{\dot{x}(t)}=0$.

From \eqref{eqq2}, $V$ is non-increasing, and since it is bounded from below, $V(t)$ has a limit, say $ \tilde{L}$. Passing to the limit in the definition of $V(t)$, using that the velocity vanishes, gives $\lim_{t\rightarrow +\infty} f(x(t)+\beta\dot{x}(t))=\tilde{L}$. On the other hand, we have
\begin{align*}
|f(x(t)+\beta\dot{x}(t))-f(x(t))|
&= \beta \abs{\int_0^1 \dotp{\nabla f(x(t)+s\beta\dot{x}(t)}{\dot{x}(t)} ds} \\
&\leq \beta \pa{\int_0^1 \norm{\nabla f(x(t)+s\beta\dot{x}(t)} ds} \norm{\dot{x}(t)} .
%&\leq \beta\max\{\Vert\nabla f(x(t))\Vert,\Vert \nabla f(x(t)+\beta\dot{x}(t))\Vert\}\Vert\Vert\dot{x}(t)\Vert+\frac{L\beta^2}{2}\Vert\dot{x}(t)\Vert^2.
\end{align*}
Passing to the limit as $t \to +\infty$, the right hand side goes to $0$ from the above limits on $\nabla f(x(t))$ and $\dot{x}(t)$. We deduce that $\lim_{t\rightarrow +\infty} f(x(t))=\tilde{L}$.

\item As in the proof for \eqref{eq:isehd}, since $(x(t))_{t  \geq 0}$ is bounded, then \eqref{xkb} holds. Besides, consider the function 
\begin{equation}\label{eq:energEisihd}
E: (x,v,w) \in \R^{3d} \mapsto f(x+v)+\frac{1}{2}\norm{w}^2 .
\end{equation}
Since $f$ is definable, so is $E$. In turn, $E$ satisfies has the K{\L} property. 
Let $\mathfrak{C}_1=\mathfrak{C}(x(\cdot))\times \{0_d\}\times \{0_d\}$. Since $E\big|_{\mathfrak{C}_1}=\tilde{L}, \nabla E\big|_{\mathfrak{C}_1}=0$, $\exists r,\eta>0, \exists \psi\in \kappa(0,\eta)$ such that for every $(x,v,w)\in\R^{3d}$ such that $ x\in \mathfrak{C}(x(\cdot))+B_{r},v\in B_r,w\in B_r$ and $0<E(x,v,w)-\tilde{L}<\eta$, we have 
	\begin{equation}\label{kl2}
        \psi'(E(x,v,w)-\tilde{L})\Vert \nabla E(x,v,w)\Vert\geq 1
    \end{equation}
By definition, we have $V(t)=E(x(t),\beta\dot{x}(t),\dot{x}(t)$. We also define $\tilde{V}(t)=V(t)-\tilde{L}$. By the properties of $V$ above, we have $\lim_{t\rightarrow +\infty} \tilde{V}(t)=0$ and $\tilde{V}$ is a non-increasing function. Thus $\tilde{V}(t)\geq 0$ for every $t > 0$. Without loss of generality, we may assume that $\tilde{V}(t)> 0$ for every $t > 0$ (since otherwise $\tilde{V}(t)$ is eventually zero entailing that $\dot{x}(t)$ is eventually zero in view of \eqref{eqq2}, meaning that $x(\cdot)$ has finite length).

 Define the constants $\delta_2=2, \delta_3=\frac{\delta_1}{\sqrt{2}}$. In view of the convergence claims on $\dot{x}$ and $\tilde{V}$ above, there exists $T > 0$, such that for any $t>T$
\begin{equation}
    \begin{cases}
         x(t)\in \mathfrak{C}(x(\cdot))+B_r,\\
         0<\tilde{V}(t)<\eta,\\
         \max\pa{\beta,1}\Vert \dot{x}(t)\Vert<r,\\
         \frac{1}{\delta_3}\psi( \tilde{V}(t))<\frac{r}{2\sqrt{2}}.
    \end{cases}  
\end{equation}
The rest of the proof is analogous to the one of Theorem~\ref{convisehd}. Since 
\begin{equation}
    \Vert \nabla E(x(t),\beta\dot{x}(t),\dot{x})\Vert^2\leq \delta_2(\Vert \dot{x}(t)\Vert^2+\Vert\nabla f(x(t)+\beta\dot{x}(t))\Vert^2),\end{equation}
and \begin{equation}\psi'(\tilde{V}(t))\Vert\nabla E(x(t),\beta\dot{x}(t),\dot{x})\Vert\geq 1,\quad\forall t\geq T.\end{equation}
We can lower bound the term $-\frac{d}{dt}\psi(\tilde{V}(t))$ for $t\geq T$ (as in \eqref{lowerode}) and conclude that $\dot{x}\in\Lp^1(\R_+;\R^d)$, and that this implies that $x(t)$ has finite length and thus has a limit as $t\rightarrow +\infty$. This limit is necessarily a critical point of $f$ since $\lim_{t\rightarrow +\infty}\Vert\nabla f(x(t))\Vert=0$.
\end{enumerate}
\end{proof}

%%%%%%%%%%%%%%%%%%%%
\subsubsection{Trap avoidance}
We now show that \eqref{eq:isihd} provably avoids strict saddle points, hence implying convergence to a local minimum if the objective function is Morse.

\begin{theorem}\label{morseisihd}
Let $c>0$, $0<\beta<\frac{2}{c}$ and $\gamma\equiv c$. Assume that $f:\R^d\rightarrow\R$ satisfies \eqref{H0} and is a Morse function. Consider \eqref{eq:isihd} in this setting. If the solution trajectory $x$ is bounded over $\R_+$, then the conclusions of Theorem~\ref{convisihd} hold. If, moreover, $\beta\neq\frac{1}{c}$, then for almost all $x_0,v_0\in\R^d$ initial conditions, $x(t)$ converges (as $t\rightarrow +\infty$) to a local minimum of $f$.
\end{theorem}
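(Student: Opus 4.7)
The plan is to mirror the proof of Theorem~\ref{morseisehd} by passing through the equivalent reformulation \eqref{refor2} with $\gamma\equiv c$, turning \eqref{eq:isihd} into an autonomous Cauchy problem $\dot{z}(t)=F(z(t))$ on $\R^{2d}$ with
\[
F(x,y)=\left(-\frac{x-y}{\beta},\ -\beta\nabla f(y)-\left(\tfrac{1}{\beta}-c\right)(x-y)\right),
\]
and $z(0)=(x_0,x_0+\beta v_0)$. The map $(x_0,v_0)\mapsto(x_0,x_0+\beta v_0)$ is a linear diffeomorphism, so Lebesgue null sets transport freely between the two parametrisations of initial data. Convergence of $x(t)$ to some $\hat{x}\in\mathrm{crit}(f)$ will be obtained from Theorem~\ref{convisihd}, since Morse functions satisfy the {\L}ojasiewicz inequality with exponent $1/2$ by Remark~\ref{rem}; moreover, because critical points of a Morse function are isolated, the connected cluster set \eqref{xkb} collapses to a single point without any extra argument.

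The central step is hyperbolicity of every equilibrium $\hat{z}=(\hat{x},\hat{x})$ of $F$. The Jacobian
\[
J_F(\hat{x},\hat{x})=\begin{pmatrix}-\tfrac{1}{\beta}I_d & \tfrac{1}{\beta}I_d \\ -\left(\tfrac{1}{\beta}-c\right)I_d & -\beta\nabla^2 f(\hat{x})+\left(\tfrac{1}{\beta}-c\right)I_d\end{pmatrix}
\]
has blocks that are polynomials in $\nabla^2 f(\hat{x})$, so pairwise commute, and Lemma~\ref{block} applies to the characteristic determinant. A direct expansion should reduce $\det(J_F(\hat{x},\hat{x})-\lambda I_{2d})=0$ to
\[
\det\bigl((1+\beta\lambda)\nabla^2 f(\hat{x})+(\lambda^2+c\lambda)I_d\bigr)=0.
\]
The candidate root $\lambda=-1/\beta$ would force $\beta c=1$, which is excluded. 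For $\lambda\neq-1/\beta$, the determinant vanishes iff, for some eigenvalue $\eta$ of $\nabla^2 f(\hat{x})$, one has $\lambda^2+(c+\eta\beta)\lambda+\eta=0$, which is exactly \eqref{eqisehd}. The discriminant analysis carried out for Theorem~\ref{morseisehd} then shows $\Re(\lambda)\neq 0$ whenever $\eta\neq 0$, and $\eta\neq 0$ is precisely the Morse hypothesis.

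The conclusion will follow from the global stable manifold theorem \cite[p.~223]{Perko96}. Because each trajectory converges to one of the (isolated) equilibria $\hat{z}_k=(\hat{x}_k,\hat{x}_k)$, one has the partition $\R^{2d}=\bigcup_{k\in I} W^s(\hat{z}_k)$ (localising to a bounded invariant region if $\mathrm{crit}(f)$ is infinite), and $\dim W^s(\hat{z}_k)$ equals the number of eigenvalues of $J_F(\hat{z}_k)$ with negative real part. For any $k$ such that $\hat{x}_k$ is not a local minimum, $\nabla^2 f(\hat{x}_k)$ has a negative eigenvalue $-\eta<0$, and the larger root of $\lambda^2+(c-\eta\beta)\lambda-\eta=0$ is strictly positive (same computation as in Theorem~\ref{morseisehd}), forcing $\dim W^s(\hat{z}_k)\leq 2d-1$, hence Lebesgue null. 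A finite union of null sets remains null, and transport through the diffeomorphism $(x_0,v_0)\mapsto(x_0,x_0+\beta v_0)$ yields the claim in the original coordinates.

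The main obstacle I anticipate is the algebraic reduction of the $2d\times 2d$ characteristic determinant via Lemma~\ref{block}: one has to verify commutativity for each off-diagonal block pair and track the cancellations carefully so as to recover precisely the quadratic \eqref{eqisehd}. Once this reduction is established, every remaining ingredient—hyperbolicity, isolation of critical points, and the eigenvalue sign argument for non-minimum equilibria—reuses the ISEHD template without any genuinely new idea.
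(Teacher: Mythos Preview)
Your proposal is correct and follows the same overall strategy as the paper: reduce to an autonomous first-order system, invoke the global stable manifold theorem, and show that the characteristic equation at any equilibrium reduces to exactly \eqref{detisehd1}, after which the hyperbolicity and sign analysis from Theorem~\ref{morseisehd} carry over verbatim.

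The one notable difference is the choice of first-order coordinates. You work in the $(x,y)$ variables of \eqref{refor2}, with equilibria $(\hat{x},\hat{x})$ and initial data $(x_0,x_0+\beta v_0)$, which forces you to transport the null set back through the linear diffeomorphism $(x_0,v_0)\mapsto(x_0,x_0+\beta v_0)$. The paper instead uses the direct phase-space variables $(x,v)$ with $v=\dot{x}$, so that $F(x,v)=(v,\,-cv-\nabla f(x+\beta v))$, the equilibria are $(\hat{x},0)$, and the initial data are $(x_0,v_0)$ with no change of coordinates needed. Both Jacobians yield the same reduced equation $\det\bigl((1+\beta\lambda)\nabla^2 f(\hat{x})+(\lambda^2+c\lambda)I_d\bigr)=0$ via Lemma~\ref{block}, so nothing is gained or lost mathematically; the paper's coordinates are simply a bit more economical. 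One small slip: you write ``a finite union of null sets remains null,'' but $\mathrm{crit}(f)$ for a Morse function is only guaranteed to be countable (isolated points), so you should say countable union---which is of course still null.
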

\begin{proof}
Since Morse functions are $C^2$ and satisfy the K{\L} inequality, and $x$ is assumed bounded, then all the claims of Theorem~\ref{convisihd} hold.

\smallskip

%When $(x(t))_{t  \geq 0}$ is bounded, the set of its cluster points $\mathfrak{C}(x(\cdot))$ satisfies \eqref{xkb}. By assumption, $f$ is a Morse function, and following the arguments in \cite[Theorem 4.1]{heavyb} we conclude with the convergence to a critical point of $f$.
%\smallskip

As in the proof of Theorem~\ref{morseisehd}, we will use again the global stable manifold theorem to prove the last point. Since, $\gamma(t)=c$ for all $t$, introducing the velocity variable $v=\dot{x}$, we have the equivalent phase-space formulation of \eqref{eq:isihd} 
    \begin{equation}\label{reforconsti}
        \begin{cases}
            \dot{x}(t)&=v(t),\\
            \dot{v}(t)&=-cv(t)-\nabla f(x(t)+\beta v(t)),
        \end{cases}
    \end{equation}
        with initial conditions $x(0)=x_0, v(0)=v_0$. Let us consider $F:\R^d\times\R^d\rightarrow\R^d\times\R^d$ defined by $$F(x,y)=\left(v, -cv -\nabla f(x+\beta v)\right).$$ 
        Defining $z(t)=(x(t),v(t))$ and $z_0=(x_0,v_0)\in\R^{2d}$ , then \eqref{reforconsti} is equivalent to  \begin{equation}\label{reforconst1i}
        \begin{cases}
            \dot{z}(t)&=F(z(t)),\\
            z(0)&=z_0 .
        \end{cases}
    \end{equation}
We know from above that under our conditions, the solution trajectory $z(t)$ converges (as $t\rightarrow +\infty$) to an equilibrium point of $F$, and the set of equilibria is $\{(\hat{x},0): \hat{x}\in\mathrm{crit}(f)\}$. Following the same ideas as in the proof of Theorem~\ref{morseisehd}, first, we will prove that each equilibrium point of $F$ is hyperbolic. We first compute the Jacobian
	\[
	J_F(x,y)=\begin{pmatrix}
            0_{d \times d} & I_d\\
            -\nabla^2 f(x+\beta v) & -c I_d-\beta\nabla^2 f(x+\beta v)
    \end{pmatrix}.
    \]
    Let $\hat{z}=(\hat{x},0)$, where $\hat{x}\in \mathrm{crit}(f)$. Then the eigenvalues of $J_F(\hat{z})$ are characterized by the solutions on $\lambda\in\mathbb{C}$ of
    \begin{equation}\label{detisehdi}
        \det\left(\begin{pmatrix}
            -\lambda I_d & I_d \\ -\nabla^2 f(\hat{x}) & -(\lambda+c)I_d-\beta\nabla^2 f(\hat{x})
        \end{pmatrix}\right)=0.
    \end{equation}
By Lemma~\ref{block}, \eqref{detisehdi} is equivalent to 
\begin{equation}\label{detisehd1i}
    \det((1+\lambda\beta)\nabla^2 f(\hat{x})+(\lambda^2+\lambda c)I_d)=0.
\end{equation}
This is the exact same equation as \eqref{detisehd1}. Thus the rest of the analysis goes as in the proof of Theorem~\ref{morseisehd}.
\end{proof}

%%%%%%%%%%%%%%%%%%%%
\subsubsection{Convergence rate}
We now give asymptotic convergence rates on the objective and trajectory.
\begin{theorem}\label{cori}
Consider the setting of Theorem~\ref{convisihd} with $f$ being also definable. Recall the function $E$ from \eqref{eq:energEisihd}, which is also definable, and denote $\psi$ its desingularizing function and $\Psi$ any primitive of $-\psi'^2$. Then, $x(t)$ converges (as $t\rightarrow +\infty$) to $x_{\infty} \in \mathrm{crit}(f)$. Denote $\tilde{V}(t)\eqdef E(x(t),\beta\dot{x}(t),\dot{x}(t))-f(x_{\infty})$. Then, the following rates of convergence hold:
\begin{itemize}
\item  If $\lim_{t\rightarrow 0}\Psi (t)\in\R$, we have $E(x(t),\dot{x}(t),\beta\nabla f(x(t)))$ converges to $f(x_{\infty})$ in finite time.

\item If $\lim_{t\rightarrow 0}\Psi (t)=+\infty$, there exists some $t_1 \geq 0$ such that \begin{equation}
        \tilde{V}(t)=\mathcal{O}(\Psi^{-1}(t-t_1))
    \end{equation}
    Moreover, 
    \begin{equation}
        \Vert x(t)-x_{\infty}\Vert=\mathcal{O}(\psi\circ\Psi^{-1}(t-t_1))
    \end{equation}
    \end{itemize}
\end{theorem}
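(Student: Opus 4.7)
The plan is to follow essentially the same route as the proof of Theorem~\ref{cor}, adapting only the Lyapunov function and the gradient bounds to the implicit setting \eqref{eq:isihd}. By Theorem~\ref{convisihd}\ref{convx} (with $f$ definable), we already know that $x(t) \to x_{\infty} \in \mathrm{crit}(f)$, that $V(t) = E(x(t), \beta\dot{x}(t), \dot{x}(t))$ is non-increasing with $\lim_{t\to+\infty} V(t) = f(x_{\infty})$, and that the desingularizing inequality \eqref{kl2} applies on a tube around the trajectory for all $t \geq T$ with some large enough $T>0$.

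First I would recall from \eqref{eqq2} that
\[
-\tilde{V}'(t) \geq \delta_1\bigl(\Vert\dot{x}(t)\Vert^2 + \Vert\nabla f(x(t)+\beta\dot{x}(t))\Vert^2\bigr),
\]
while a direct computation of $\nabla E$ for $E(x,v,w) = f(x+v) + \tfrac{1}{2}\Vert w\Vert^2$ yields the matching upper bound
\[
\Vert \nabla E(x(t),\beta\dot{x}(t),\dot{x}(t))\Vert^2 \leq \delta_2 \bigl(\Vert\dot{x}(t)\Vert^2 + \Vert\nabla f(x(t)+\beta\dot{x}(t))\Vert^2\bigr),
\]
for a constant $\delta_2>0$ (essentially $\delta_2 = 2$ as noted in the proof of Theorem~\ref{convisihd}\ref{convx}). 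Setting $\delta_0 = \delta_2/\delta_1$, these two inequalities combine, via the KL inequality \eqref{kl2} applied to $E$, exactly as in \eqref{eq:PsiVdot}, giving
\[
\frac{d}{dt}\Psi(\tilde{V}(t)) = -\psi'^2(\tilde{V}(t))\tilde{V}'(t) \geq \delta_0 \psi'^2(\tilde{V}(t)) \Vert\nabla E(x(t),\beta\dot{x}(t),\dot{x}(t))\Vert^2 \geq \delta_0
\]
for $t\geq T$. Integration then produces $\Psi(\tilde{V}(t)) \geq \delta_0(t-T) + \Psi(\tilde{V}(T))$.

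The dichotomy on the limit $\lim_{t\to 0}\Psi(t)$ is handled exactly as in the proof of Theorem~\ref{cor}: if the limit is finite, $\tilde{V}$ must vanish in finite time (following \cite[Theorem 3.1.12]{garrigos}); otherwise, inverting the non-increasing function $\Psi$ yields the bound $\tilde{V}(t) = \mathcal{O}(\Psi^{-1}(t-t_1))$ for a suitable $t_1 \geq 0$. For the trajectory rate, I would reuse the finite-length estimate produced in the proof of Theorem~\ref{convisihd}\ref{convx}, namely $\Vert x(t)-x_{\infty}\Vert \leq \int_t^{+\infty}\Vert\dot{x}(s)\Vert ds \leq \tfrac{1}{\delta_3}\psi(\tilde{V}(t))$, and then substitute the rate just obtained on $\tilde{V}$ to conclude $\Vert x(t)-x_{\infty}\Vert = \mathcal{O}(\psi\circ\Psi^{-1}(t-t_1))$.

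There is no real obstacle here; the argument is a mechanical transposition of Theorem~\ref{cor}. The only point requiring a little care is to verify that the gradient bound $\Vert\nabla E\Vert^2 \leq \delta_2(\Vert\dot{x}\Vert^2 + \Vert\nabla f(x+\beta\dot{x})\Vert^2)$ matches exactly the right-hand side of the dissipation inequality \eqref{eqq2}, so that both factors combine cleanly in $\psi'^2(\tilde{V})\Vert\nabla E\Vert^2$ to produce the constant lower bound $\delta_0$. Since for the implicit Lyapunov function the gradient of $E$ in its three arguments is $(\nabla f(x+v), \nabla f(x+v), w)$, which evaluated on the trajectory gives precisely $(\nabla f(x+\beta\dot{x}), \nabla f(x+\beta\dot{x}), \dot{x})$, the match is immediate and the constants are explicit.
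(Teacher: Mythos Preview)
Your proposal is correct and follows exactly the approach the paper takes: the paper's proof consists of the single sentence ``Analogous to Theorem~\ref{cor},'' and you have faithfully carried out that analogy, adapting the Lyapunov function to $E(x,v,w)=f(x+v)+\tfrac{1}{2}\Vert w\Vert^2$, using the dissipation inequality \eqref{eqq2} and the matching gradient bound $\Vert\nabla E\Vert^2\leq\delta_2(\Vert\dot{x}\Vert^2+\Vert\nabla f(x+\beta\dot{x})\Vert^2)$ with $\delta_2=2$ from the proof of Theorem~\ref{convisihd}, and then replicating the chain \eqref{eq:PsiVdot} and the finite-length estimate verbatim.
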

\begin{proof}
    Analogous to Theorem~\ref{cor}.
\end{proof}

When $f$ has the {\L}ojasiewicz property, we get the following corollary of Theorem~\ref{cori}.
\begin{corollary}
Consider the setting of Theorem~\ref{cori} where now $f$ satisfies the {\L}ojasiewicz inequality with desingularizing function $\psi_f(s)=c_f s^{1-q}$, $q \in [0,1[$, $c_f > 0$. Then there exists some $t_1 > 0$ such that:
    \begin{itemize}
    \item If $q \in [0,\frac{1}{2}]$, then there exists $\mu>0$ such that:
        \begin{equation}
        \tilde{V}(t)=\mathcal{O}(\exp(-\mu(t-t_1))) \quad \text{and} \quad
    	\Vert x(t)-x_{\infty}\Vert=\mathcal{O}\left(\exp\left(-\frac{\mu(t-t_1)}{2}\right)\right)
    	\end{equation}
    \item If $q \in ]\frac{1}{2},1[$, then
    	\begin{equation}
        \tilde{V}(t)=\mathcal{O}((t-t_1)^{\frac{-1}{2q-1}}) \quad \text{and} \quad
        \Vert x(t)-x_{\infty}\Vert=\mathcal{O}((t-t_1)^{-\frac{1-q}{2q-1}})
    	\end{equation}
    \end{itemize}
    
\end{corollary}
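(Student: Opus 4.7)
The strategy is to mimic the argument of Corollary~\ref{corver} by identifying the Łojasiewicz exponent $q_E$ of the energy $E$ defined in \eqref{eq:energEisihd} and then computing explicitly the primitive $\Psi$ of $-\psi'^2$ in each regime before invoking Theorem~\ref{cori}. The only ingredient that differs from the explicit case is the structural form of $E$, which here reads $E(x,v,w) = f(x+v) + \tfrac{1}{2}\|w\|^2$ rather than $f(x) + \tfrac{1}{2}\|v+w\|^2$.

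First, I would argue that $E$ satisfies the Łojasiewicz inequality with exponent $q_E = \max(q,1/2)$. To this end, note that $E$ decomposes as a separable sum $E(x,v,w) = \phi(x,v) + \tfrac{1}{2}\|w\|^2$, where $\phi = f \circ L$ with $L:(x,v)\mapsto x+v$ a surjective linear map. A direct computation gives $\nabla\phi(x,v) = (\nabla f(x+v),\nabla f(x+v))$, hence $\|\nabla\phi(x,v)\| = \sqrt{2}\,\|\nabla f(x+v)\|$. Consequently, the Łojasiewicz inequality with exponent $q$ for $f$ at the relevant critical point transfers to $\phi$ with the same exponent (and the critical set of $\phi$ is the preimage under $L$ of $\mathrm{crit}(f)$). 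Since $\tfrac{1}{2}\|w\|^2$ is Łojasiewicz with exponent $1/2$, the calculus rule for separable sums in \cite[Theorem~3.3]{calckl} yields that $E$ has desingularizing function $\psi_E(s) = c_E s^{1-q_E}$ for some $c_E > 0$ and $q_E = \max(q,1/2)$.

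Next, I would compute $\Psi$, a primitive of $-\psi_E'^2$, exactly as in Corollary~\ref{corver}: if $q \in [0,1/2]$, then $q_E = 1/2$ gives $\Psi(s) = \frac{c_E^2}{4}\ln(1/s)$, so $\Psi^{-1}(s) = \frac{4}{c_E^2}\exp(-s)$; and if $q \in (1/2,1)$, then $q_E = q$ gives $\Psi(s) = \frac{c_E^2}{4(2q-1)}s^{1-2q}$, whence $\Psi^{-1}(s) = \left(\frac{4(2q-1)}{c_E^2}\right)^{1/(2q-1)} s^{-1/(2q-1)}$. Plugging these into the bounds of Theorem~\ref{cori} produces the claimed rates on $\tilde{V}(t)$ and on $\|x(t)-x_\infty\|$, the latter via $\psi_E \circ \Psi^{-1}$.

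The main (minor) obstacle is checking carefully that the Łojasiewicz calculus of \cite{calckl} applies to $\phi = f \circ L$ even though $L$ is not a bijection; here the surjectivity of $L$ together with the equality $\|\nabla \phi(x,v)\|=\sqrt 2 \,\|\nabla f(x+v)\|$ suffices so that no degeneracy is introduced by the composition. Once this is secured, the remainder is a routine transcription of the argument of Corollary~\ref{corver}.
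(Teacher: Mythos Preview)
Your proposal is correct and follows the same approach as the paper, whose proof merely reads ``Analogous to Corollary~\ref{corver}.'' You are in fact more careful than the paper in noting that here $E(x,v,w)=f(x+v)+\tfrac12\|w\|^2$ involves $f\circ L$ rather than $f$ directly, and in verifying that the surjective linear composition preserves the {\L}ojasiewicz exponent before invoking the separable-sum rule of \cite{calckl}; this extra step is warranted and fills a small gap the paper leaves implicit.
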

\begin{proof}
    Analogous to Corollary \ref{corver}.
\end{proof}

%%%%%%%%%%%%%%%%%%%%%%%%%%%%%%
\subsection{Algorithmic scheme}
In this section, we will study the properties of an algorithmic scheme derived from the following explicit discretization discretization of \eqref{eq:isihd} with step-size $h>0$ and for $k\geq 1$:
\begin{equation}
    \frac{x_{k+1}-2x_k+x_{k-1}}{h^2}+\gamma(kh)\frac{x_{k+1}-x_k}{h}+\nabla f\left(x_k+\beta\frac{x_k-x_{k-1}}{h}\right)=0.
\end{equation}
This is equivalently written as 
\begin{equation}\label{isihd-disc}\tag{ISIHD-Disc}
        \begin{aligned}
            \begin{cases}
                y_k&=x_k+\alpha_k(x_k-x_{k-1}),\\
                x_{k+1}&=y_k-s_k\nabla f(x_k+\beta'(x_k-x_{k-1})),
            \end{cases}
        \end{aligned}
\end{equation}
with initial conditions $x_0,x_1 \in \R^d$, where $\gamma_k\eqdef \gamma(kh), \alpha_k\eqdef \frac{1}{1+\gamma_k h}, s_k\eqdef h^2\alpha_k$ and $\beta'\eqdef\frac{\beta}{h}$.

%%%%%%%%%%%%%%%%%%%%
\subsubsection{Global convergence and trap avoidance}
We have the following result which characterizes the asymptotic behavior of algorithm \eqref{isihd-disc}, which shows that the latter enjoys the same guarantees as \eqref{isehd-disc} given in Theorem~\ref{convisehddisc}. We will again require that $\nabla f$ is globally Lipschitz-continuous. We refer to Section \ref{sec:backtracking} for results that require only local Lipschitz continuity of $\nabla f$, where the stepsize is determined via backtracking.

\begin{theorem}\label{convisihddisc}
Let $f:\R^d\rightarrow\R$ satisfying \eqref{H0} with $\nabla f$ being globally $L$-Lipschitz-continuous. Consider algorithm \eqref{isihd-disc} with $h>0$, $\beta\geq 0$ and $c\leq\gamma_k\leq C$ for some $c,C > 0$ and for every $k\in\N$. Then the following holds:
\begin{enumerate}[label=(\roman*)]
        \item If $\beta+\frac{h}{2}<\frac{c}{L}$, then $(\Vert\nabla f(x_k)\Vert)_{k\in\N}\in \ell^2(\N)$, and $(\Vert x_{k+1}-x_k\Vert)_{k\in\N}\in \ell^2(\N)$, in particular $$\lim_{k\rightarrow +\infty} \Vert\nabla f(x_k)\Vert=0.$$
        
    \item Moreover, if $\seq{x_k}$ is bounded and $f$ is definable, then $(\Vert x_{k+1}-x_{k}\Vert)_{k\in\N}\in\ell^1(\N)$ and $x_k$ converges (as $k\rightarrow +\infty$) to a critical point of $f$.

    \item Furthermore, if $\gamma_k\equiv c>0$, $0<\beta<\frac{c}{L}, \beta\neq \frac{1}{c}$, and $h<\min\pa{2\left(\frac{c}{L}-\beta\right),\frac{1}{L\beta}}$, then for almost all $x_0, x_1 \in\R^d$, $x_k$ converges (as $k\rightarrow +\infty$) to a critical point of $f$ that is not a strict saddle. Consequently, if $f$ satisfies the strict saddle property, for almost all $x_0, x_1\in\R^d$, $x_k$ converges (as $k\rightarrow +\infty$) to a local minimum of $f$.
\end{enumerate}
\end{theorem}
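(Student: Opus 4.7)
The plan is to mirror closely the proof of Theorem~\ref{convisehddisc}, with adaptations reflecting the key structural difference of \eqref{isihd-disc}: the gradient is evaluated at the shifted point $\tilde{x}_k \eqdef x_k + \beta'(x_k - x_{k-1})$ rather than at $x_k$ itself. This shift is absorbed via the global $L$-Lipschitz continuity of $\nabla f$, which yields $\norm{\nabla f(\tilde{x}_k) - \nabla f(x_k)} \leq L\beta' \norm{v_k}$ with $v_k \eqdef x_k - x_{k-1}$, contributing an additional cross-term that is controlled by Young's inequality.

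For part (i), I would introduce the Lyapunov function $V_k \eqdef f(x_k) + \frac{C_1}{2}\norm{v_k}^2$, with the same form as in the explicit scheme. From the update $v_{k+1} = \alpha_k v_k - s_k \nabla f(\tilde{x}_k)$, one isolates $\nabla f(\tilde{x}_k) = (\alpha_k v_k - v_{k+1})/s_k$ and decomposes
\[
\dotp{\nabla f(x_k)}{v_{k+1}} = \dotp{\nabla f(\tilde{x}_k)}{v_{k+1}} + \dotp{\nabla f(x_k) - \nabla f(\tilde{x}_k)}{v_{k+1}} ,
\]
bounding the first summand exactly and the second by $L\beta' \norm{v_k}\norm{v_{k+1}}$. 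Combining with the descent lemma for $L$-smooth $f$ and applying Young's inequality with the scalings $\alpha = \norm{v_k}/h^2$ and $\alpha = L\beta\norm{v_k}/h$ (against $\beta = \norm{v_{k+1}}$) yields, upon minimization over the two Young parameters ($\varepsilon = 1/h^2$, $\varepsilon' = L\beta/h$), a descent inequality $V_{k+1} \leq V_k - \delta \norm{v_{k+1}}^2$. The positivity of $\delta$ reduces to the same condition $\beta + h/2 < c/L$ as in the explicit case. Summing over $k$ gives $\seq{\norm{v_k}} \in \ell^2(\N)$, and the identity $s_k\norm{\nabla f(\tilde{x}_k)} = \norm{\alpha_k v_k - v_{k+1}}$ combined with Lipschitz continuity transfers this to $\seq{\norm{\nabla f(x_k)}} \in \ell^2(\N)$.

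For part (ii), the argument is identical to the one in Theorem~\ref{convisehddisc}. Boundedness of $\seq{x_k}$ together with $\norm{v_k}\to 0$ and $\norm{\nabla f(x_k)}\to 0$ ensures that the cluster set satisfies \eqref{xkbdisc}. The extended energy $E(x,v) = f(x) + \frac{C_1}{2}\norm{v}^2$ is definable, so the uniformized K{\L} property applies on a neighborhood of $\mathfrak{C}(\seq{x_k}) \times \{0_d\}$. Combining \eqref{kldiscapp} with the descent inequality and concavity of the desingularizing function reproduces the bound $\norm{v_{k+1}}^2 \leq \delta_4 (\Delta\psi)_k (\norm{v_k} + \norm{v_{k+1}})$, from which Young's inequality, telescoping, and the contraction argument after \eqref{estvk} deliver $\seq{\norm{v_k}} \in \ell^1(\N)$ and hence convergence of $x_k$ to a critical point.

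For part (iii), I would rewrite the iteration as $z_{k+1} = g(z_k)$ with $z_k = (x_k,x_{k-1})$ and
\[
g(x_+, x_-) = \pa{(1+\alpha)x_+ - \alpha x_- - s\nabla f\pa{(1+\beta')x_+ - \beta' x_-},\; x_+} ,
\]
and apply \cite[Corollary~1]{localminima}. Lemma~\ref{block} reduces $\det(J_g(x_+,x_-))$ to $\det(\alpha I_d - s\beta' \nabla^2 f(\cdot))$, which is nonzero when $\alpha > s\beta' L$; translating this into the step-size gives $h < 1/(L\beta)$. For the strict saddle condition, at a fixed point $(\hat{x},\hat{x})$ with $\hat{x}\in\mathrm{crit}(f)$, Lemma~\ref{block} reduces the characteristic polynomial to
\[
\det\!\pa{(\lambda^2 - \lambda(1+\alpha) + \alpha) I_d + s(\lambda(1+\beta') - \beta') \nabla^2 f(\hat{x})} = 0 .
\]
The exceptional value $\lambda = \beta'/(1+\beta')$ leads to $\alpha = \beta'/(1+\beta')$, which translates to $\beta c = 1$ and is excluded by hypothesis. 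For each negative eigenvalue $-\eta$ of $\nabla^2 f(\hat{x})$, the resulting quadratic $\lambda^2 - \lambda[(1+\alpha) + s(1+\beta')\eta] + (\alpha + s\beta'\eta) = 0$ is formally identical to the one appearing in the proof of Theorem~\ref{convisehddisc} under the correspondence $\tilde{\beta} \leftrightarrow s\beta'$ (note that $\tilde{\beta}+s \leftrightarrow s(1+\beta')$), so the same discriminant and root analysis shows the larger root exceeds $1$. The main obstacle is the careful bookkeeping in the Lyapunov descent of part (i): the cross-term $L\beta'\norm{v_k}\norm{v_{k+1}}$ introduced by the shift is the critical new ingredient, and matching the explicit case's threshold $\beta + h/2 < c/L$ requires choosing the two Young parameters to simultaneously balance against both $\frac{1}{h^4\varepsilon}$ and $\frac{L^2\beta^2}{h^2 \varepsilon'}$, which fortunately yields exactly the same condition.
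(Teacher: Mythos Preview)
Your proposal is correct and follows essentially the same route as the paper. Two cosmetic differences worth noting: in part~(i) the paper bundles the two cross terms into a single $\tilde{\alpha}\norm{v_k}\norm{v_{k+1}}$ with $\tilde{\alpha}=\beta'L+1/h^2$ and applies Young's inequality once (rather than twice as you do, mirroring the ISEHD argument), arriving at the same threshold; and in part~(iii) your observation that the eigenvalue quadratic coincides with the ISEHD one under $\tilde{\beta}\leftrightarrow s\beta'$ is a clean shortcut---the paper instead repeats the discriminant computation explicitly.
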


%\begin{remark}
%    When $\beta=0$, we recover the Heavy Ball with Friction method and the condition $h<\min\{2\left(\frac{c}{L}-\beta\right),\frac{1}{L\beta}\}$ becomes $h<\frac{2c}{L}$.
%\end{remark}

\begin{proof}
\begin{enumerate}[label=(\roman*)]
    \item 
   Let $v_k\eqdef x_k-x_{k-1}$, $\bar{\alpha}\eqdef\frac{1}{1+ch}$, $\ubar{\alpha}\eqdef \frac{1}{1+Ch}$, $\bar{s}=h^2\bar{\alpha}, \ubar{s}=h^2\ubar{\alpha}$, so $\ubar{\alpha}\leq \alpha_k\leq\bar{\alpha}$ and $\ubar{s}\leq s_k\leq \bar{s}$ for every $k\in\N$. Proceeding as in the proof of Theorem~\ref{convisehddisc}, we have by definition that for $k\in\N^*$
   \begin{equation}
   x_{k+1}=\argmin_{x\in\R^d}\frac{1}{2}\Vert x-(y_k-s_k\nabla f(x_k+\beta'v_k))\Vert^2,
   \end{equation}
   and $1-$strong convexity of $x\mapsto \frac{1}{2}\Vert x-(y_k-s_k\nabla f(x_k+\beta'v_k))\Vert^2$ then gives
   \begin{equation}\label{eqqq1}
        \frac{1}{2}\Vert x_{k+1}-(y_k-s_k\nabla f(x_k+\beta'v_k))\Vert^2\leq \frac{1}{2}\Vert x_k-(y_k-s_k\nabla f(x_k+\beta'v_k))\Vert^2-\frac{1}{2}\Vert x_{k+1}-x_k\Vert^2.
    \end{equation}
   Expanding and rearranging, we obtain 
   \begin{equation}\label{eqqq2}
       \langle \nabla f(x_k+\beta'v_k),v_{k+1}\rangle\leq -\frac{\Vert v_{k+1}\Vert^2}{s_k}+\frac{1}{h^2}\langle v_k,v_{k+1}\rangle.
   \end{equation}
   Combining this with the descent lemma of $L$-smooth functions applied to $f$, we arrive at 
   %\begin{equation}
   %    f(x_{k+1})\leq f(x_k)+\langle\nabla f(x_k),v_{k+1}\rangle+\frac{L}{2}\Vert v_{k+1}\Vert^2.
   %\end{equation}
  % And we bound as follows 
  \begin{equation}\label{imp:isihd}
  \begin{split}
      f(x_{k+1})&\leq f(x_k)+\langle\nabla f(x_k),v_{k+1}\rangle+\frac{L}{2}\Vert v_{k+1}\Vert^2\\
       &=f(x_k)+\langle\nabla f(x_k)-\nabla f(x_k+\beta'v_k),v_{k+1}\rangle+\langle \nabla f(x_k+\beta'v_k),v_{k+1}\rangle+\frac{L}{2}\Vert v_{k+1}\Vert^2\\
       &\leq f(x_k)+\left(\beta'L+\frac{1}{h^2}\right)\Vert v_k\Vert \Vert v_{k+1}\Vert-\left(\frac{1}{\bar{s}}-\frac{L}{2}\right)\Vert v_{k+1}\Vert^2.
  \end{split}
   \end{equation}
   Where we have used that the gradient of $f$ is $L-$Lipschitz and Cauchy-Schwarz inequality in the last bound. Denote $\tilde{\alpha}=\beta'L+\frac{1}{h^2}$. We can check that since $h<\frac{2c}{L}$, then $0<\bar{s}<\frac{2}{L}$ and the last term of the inequality is negative. Using Young's inequality we have that for $\varepsilon>0:$
   \begin{align*}
       f(x_{k+1})&\leq f(x_k)+\frac{\tilde{\alpha}^2}{2\varepsilon}\Vert v_k\Vert^2+\varepsilon\frac{\Vert v_{k+1}\Vert^2}{2}-\left(\frac{1}{\bar{s}}-\frac{L}{2}\right)\Vert v_{k+1}\Vert^2.
   \end{align*}
   Or equivalently,\begin{align*}
       f(x_{k+1})+\frac{\tilde{\alpha}^2}{2\varepsilon}\Vert v_{k+1}\Vert^2\leq f(x_k)+\frac{\tilde{\alpha}^2}{2\varepsilon}\Vert v_k\Vert^2+\left[\frac{\varepsilon}{2}+\frac{\tilde{\alpha}^2}{2\varepsilon}-\left(\frac{1}{\bar{s}}-\frac{L}{2}\right)\right]\Vert v_{k+1}\Vert^2.
   \end{align*}
   In order to make the last term negative, we impose 
   \[
   \frac{\varepsilon}{2}+\frac{\tilde{\alpha}^2}{2\varepsilon}<\frac{1}{\bar{s}}-\frac{L}{2} .
   \]
   Minimizing for $\varepsilon$ at the left-hand side we obtain $\varepsilon=\tilde{\alpha}$ and the condition to satisfy is 
   \begin{equation}\label{conds}
       \bar{s}<\frac{2}{2\tilde{\alpha}+L}.
   \end{equation}
   Recalling the definitions of $\bar{s},\tilde{\alpha},\beta'$, this is equivalent to
   %Since $\frac{2}{2\tilde{\alpha}+L}<\frac{2}{L}$,  
   \[
   \frac{h^2}{1+ch}<\frac{2}{2\left(L\frac{\beta}{h}+\frac{1}{h^2}\right)+L} \iff 2L\beta h+2+Lh^2<2+2ch .
   \]
   Simplifying, this reads 
   \[
   \beta+\frac{h}{2}<\frac{c}{L},
   \]
   which is precisely what we have assumed. Let $\delta=\left(\frac{1}{\bar{s}}-\frac{L}{2}\right)-\tilde{\alpha}>0$, then  
   \begin{equation}\label{lyapisihd}
       f(x_{k+1})+\frac{\tilde{\alpha}}{2}\Vert v_{k+1}\Vert^2\leq f(x_k)+\frac{\tilde{\alpha}}{2}\Vert v_k\Vert^2-\delta\Vert v_{k+1}\Vert^2.
   \end{equation}
       
   Toward our Lyapunov analysis, define now $V_k=f(x_k)+\frac{\tilde{\alpha}}{2}\Vert v_{k}\Vert^2$ for $k\in\N^*$. In view of \eqref{lyapisihd}, $V_k$ obeys
   \begin{equation}\label{Vklyapisihd}
        V_{k+1} \leq V_k - \delta\Vert v_{k+1}\Vert^2.
   \end{equation}
   and thus $V_k$ is non-increasing. Since it is also bounded from below, $V_k$ converges to a limit, say $\tilde{L}$. Summing \eqref{lyapisihd} over $k\in\N^*$, we get that $(\Vert v_{k+1}\Vert)_{k\in\N}\in\ell^2(\N)$, hence $\lim_{k\rightarrow +\infty} \Vert v_k\Vert=0$.
   Besides, since $\bar{\alpha} < 1$ 
   \begin{align*}
       \Vert\nabla f(x_k+\beta'v_k)\Vert=\frac{1}{s_k}\Vert x_{k+1}-y_k\Vert&\leq\frac{1}{\ubar{s}}(\Vert x_{k+1}-x_k\Vert+\Vert x_k-y_k\Vert)\\
       &\leq \frac{1}{\ubar{s}}(\Vert v_{k+1}\Vert+\bar{\alpha}\Vert v_k\Vert)\\
       &\leq \frac{1}{\ubar{s}}(\Vert v_{k+1}\Vert+\Vert v_k\Vert) ,
   \end{align*}
   which implies 
   \[
   \Vert\nabla f(x_k+\beta'v_k)\Vert^2\leq \delta_2(\Vert v_{k+1}\Vert^2+\Vert v_k\Vert^2),
   \]
   where $\delta_2=\frac{2}{\ubar{s}^2}$. Consequently $(\Vert\nabla f(x_k+\beta'v_k)\Vert)_{k\in\N}\in\ell^2(\N),$ and \begin{align*}
       \Vert \nabla f(x_k)\Vert^2 &= 2(\Vert \nabla f(x_k)-\nabla f(x_k+\beta'v_k)\Vert^2+\Vert \nabla f(x_k+\beta'v_k)\Vert^2)\\
       &\leq 2(L^2\beta'^2\Vert v_k\Vert^2+\Vert \nabla f(x_k+\beta'v_k)\Vert^2).
   \end{align*}
   Thus, $(\Vert\nabla f(x_k)\Vert)_{k\in\N}\in\ell^2(\N)$, hence $\lim_{k\rightarrow +\infty} \Vert \nabla f(x_k)\Vert=0$.

   \item When $\seq{x_k}$ is bounded and $f$ is definable, we proceed analogously as in the proof of Theorem~\ref{convisehddisc} to conclude that $(\Vert v_k\Vert)_{k\in\N}\in\ell^1(\N^*)$, so $\seq{x_k}$ is a Cauchy sequence which implies that it has a limit (as $k\rightarrow +\infty$) denoted $x_{\infty}$, which is a critical point of $f$ since $\lim_{k\rightarrow +\infty} \Vert \nabla f(x_k)\Vert=0$.

\item When $\gamma_k\equiv c$, we let $\alpha_k\equiv \alpha=\frac{1}{1+ch},s_k\equiv s=h^2\alpha$. Let $z_{k}=(x_k,x_{k-1})$, and $g:\R^d\times \R^d\rightarrow \R^d\times\R^d$ defined by 
\[
g:(x_+,x_-)\mapsto [(1+\alpha)x_+-\alpha x_- -s\nabla f(x_+ +\beta'(x_+-x_-)),x_+] .
\]
\eqref{isihd-disc} is then equivalent to 
\begin{equation}\label{zkp2}
    z_{k+1}=g(z_k).
\end{equation}

To conclude, we will again use \cite[Corollary~1]{localminima}, similarly to what we did in the proof of  Theorem~\ref{convisehddisc}, by checking that: 
    \begin{enumerate}[label=(\alph*)]
        \item \label{condisihda} $\det (J_g(x_+,x_-))\neq 0 $ for every $x_+,x_-\in\R^d$.
        \item \label{condisihdb} $\hat{\mathcal{X}}\subset \mathcal{A}_g^{\star}$, where $\mathcal{A}_g^{\star}\eqdef \{(x,x)\in\R^{2d}: x\in\mathrm{crit}(f), \max_i |\lambda_i(J_g(x,x))|>1 \}$ and $\hat{\mathcal{X}}\eqdef\{(x,x)\in\R^{2d}:x\in \mathcal{X}^{\star}\}$, with $\mathcal{X}^{\star}$ the set of strict saddle points of $f$.
    \end{enumerate}

    The Jacobian $J_g(x_+,x_-)$ reads    
    \begin{equation}
        \begin{pmatrix}
        (1+\alpha)I_d-s(1+\beta')\nabla^2 f(x_+ +\beta'(x_+-x_-)) & -\alpha I_d+s\beta'\nabla^2 f(x_+ +\beta'(x_+-x_-))\\
        I_d & 0_{d\times d}
    \end{pmatrix} .
    \end{equation}
 This is a block matrix, where the bottom-left matrix commutes with the upper-left matrix (since is the identity matrix), then by Lemma~\ref{block} $\det(J_g(x_+,x_-))=\det (\alpha I_d-\beta's\nabla^2 f(x_+ +\beta'(x_+-x_-)))$. Since the eigenvalues of $\nabla^2 f(x_+ +\beta'(x_+-x_-))$ are contained in $[-L,L]$. It is then sufficient that $\alpha>\beta'Ls$ to have that $\eta-\frac{\alpha}{\beta's}\neq 0$ for every eigenvalue $\eta \neq 0$ of $\nabla^2 f(x_+ +\beta'(x_+-x_-))$. This means that under $\alpha>\beta'Ls$,  condition \ref{condisihda} is in force. Requiring $\alpha>\beta'Ls$ is equivalent to $h<\frac{1}{L\beta}$, and since we already need $h<2\left(\frac{c}{L}-\beta\right)$, we just ask $h$ to be less than the minimum of the two quantities.\\

Let us check condition \ref{condisihdb}. Let $x$ be a strict saddle point of $f$, \ie $x \in \mathrm{crit}(f)$ and $\lambda_{\min}(\nabla^2 f(x))=-\eta < 0$. To characterize the eigenvalues of $J_g(x,x)$ we could use Lemma \ref{block} as before, however, we will present an equivalent argument. Let $\eta_i \in \R$, $i=1,\ldots,d$, be the eigenvalues of $\nabla^2 f(x)$. By symmetry of the Hessian, it is easy to see that the $2d$ eigenvalues of $J_g(x,x)$ coincide with the eigenvalues of the $2 \times 2$ matrices
\[
\begin{pmatrix}
(1+\alpha)-s(1+\beta')\eta_i & -\alpha+s\beta'\eta_i \\
1 & 0
\end{pmatrix} .
\] 
These eigenvalues are therefore the (complex) roots of
\begin{equation}\label{quaddi}
\lambda^2 - \lambda\pa{(1+\alpha)-s(1+\beta')\eta_i} + \alpha - s\beta'\eta_i = 0 .
\end{equation}
%To compute the eigenvalues of $J_g(x,x)$ we consider $$\det\left(\begin{pmatrix}
%        (1+\alpha-\lambda)I_d-s(1+\beta')\nabla^2 f(x) & -\alpha I_d+s\beta'\nabla^2 f(x)\\
%        I_d & -\lambda I_d
%    \end{pmatrix}\right)=0.$$
% Again by Lemma~\ref{block}, we get that \begin{align*}
%     &\det\left(\begin{pmatrix}
%     (1+\alpha-\lambda)I_d-s(1+\beta')\nabla^2 f(x) & -\alpha I_d+s\beta'\nabla^2 f(x)\\
%     I_d & -\lambda I_d
% \end{pmatrix}\right)=\\
% &\det[(-\lambda(1+\alpha)+\lambda^2)I_d+\lambda s(1+\beta')\nabla^2 f(x)+\alpha I_d-s\beta'\nabla^2 f(x)]=\\
% &\det[s(\lambda (1+\beta')-\beta')\nabla^2 f(x)+(\lambda^2-\lambda(1+\alpha)+\alpha)I_d]
% \end{align*}
% So we want to solve for $\lambda$, \begin{equation}\label{quaddi}
%     \det[s(\lambda (1+\beta')-\beta')\nabla^2 f(x)+(\lambda^2-\lambda(1+\alpha)+\alpha)I_d]=0.
% \end{equation}
If $\lambda=\frac{\beta'}{\beta'+1}$, then \eqref{quaddi} becomes 
\[
\left(\frac{\beta'}{\beta'+1}\right)^2-\left(\frac{\beta'}{\beta'+1}\right)(1+\alpha)+\alpha=0.
\]
This implies that $\alpha=\frac{\beta'}{\beta'+1}$, or equivalently $\frac{1}{1+ch}=\frac{\beta}{\beta+h}$. But this contradicts our assumption that $\beta c\neq 1$, and thus this case cannot occur. 
%From now on, we handle the case $\lambda\neq\frac{\beta'}{\beta'+1}$ and we can rewrite \eqref{quaddi} as $$\det\left(\nabla^2 f(x)-\frac{\lambda^2-\lambda(1+\alpha)+\alpha}{s(\beta'-\lambda(1+\beta'))}I_d\right)=0.$$
% Therefore, as argued before, for every eigenvalue $\eta'\in\R$ of $\nabla^2 f(x)$, a $\lambda$ satisfying the previous equation also satisfies: $$\frac{\lambda^2-\lambda(1+\alpha)+\alpha}{s(\beta'-\lambda(1+\beta'))}=\eta'.$$
Let us now solve \eqref{quaddi} for $\eta_i=-\eta$.
%for this choice we have the equation \begin{equation}\label{quadd1i}
%     \lambda^2-\lambda[(1+\alpha)+\eta s(1+\beta')]+\alpha+\eta s\beta'=0.
% \end{equation}
%$\Delta_{\lambda}=((1+\alpha)+\eta s(1+\beta'))^2-4(\alpha+\eta\beta')$. After developing the terms we get that
Its discriminant is
\[
\Delta_{\lambda}=\alpha^2+2\alpha(\eta s(1+\beta')-1)+(\eta s(1+\beta')+1)^2-4\eta s\beta',
\]
which can be seen as a quadratic equation in $\alpha$ whose discriminant $\Delta_{\alpha}=-16\eta s$. Since $\Delta_{\alpha} < 0$ (recall that $\eta, s > 0$). Therefore the quadratic equation on $\alpha$ does not have real roots implying that $\Delta_{\lambda}>0$. We can then write the solutions of \eqref{quaddi}, \[
\lambda=\frac{\pa{(1+\alpha)+\eta s(1+\beta')} \pm \sqrt{\Delta_{\lambda}}}{2}.
\]
Let us examine the largest solution (the one with the plus sign) and show that actually $\lambda>1$. Simple algebra shows that this is equivalent to verifying that
%\[
%\frac{[(1+\alpha)+\eta s(1+\beta')]+ \sqrt{\Delta_{\lambda}}}{2}>1,
%\] 
\[
\Delta_{\lambda}>(2-(1+\alpha)-\eta s(1+\beta'))^2.
\]
%which in turn is equivalent to Taking the square at both sides of this inequality, the resulting will give us a sufficient condition to ensure $\lambda>1$. 
or, equivalently, 
\begin{align*}
    (1-\alpha)^2+\eta^2s^2(1+\beta')^2-2(1-\alpha)\eta s(1+\beta')&<\Delta_{\lambda}\\
    &=\pa{(1+\alpha)^2+\eta(s-\beta')}^2-4(\alpha-\eta\beta').
\end{align*}
Simple algebra again shows that this inequality is equivalent to  $0<4\eta s$, which is always true as $\eta > 0$. We have thus shown that $\hat{\mathcal{X}}\subset \mathcal{A}_g^{\star}$.

Overall, we have checked the two conditions \ref{condisihda}-\ref{condisihdb} above. Therefore the invariant set $\{z_1\in\R^{2d}: \lim_{k\rightarrow +\infty} g^k(z_1)\in\hat{\mathcal{X}}\}$ has Lebesgue measure zero. This means that the set of initializations $x_0,x_1\in\R^d$ for which $x_k$ converges to a strict saddle point of $f$ has Lebesgue measure zero.
\end{enumerate}
\end{proof}

%%%%%%%%%%%%%%%%%%%%
\subsubsection{Convergence rate}
The asymptotic convergence rate of algorithm \eqref{isihd-disc} for {\L}ojasiewicz functions is given in the following theorem. This shows that \eqref{isihd-disc} enjoys the same asymptotic convergence rates as \eqref{isehd-disc}.
\begin{theorem}\label{ratesi}
Consider the setting of Theorem~\ref{convisihddisc}, where $f$ also satisfies the {\L}ojasiewicz property with exponent $q\in [0,1[$.  Then $x_k \to x_{\infty} \in \mathrm{crit}(f)$ as $k\rightarrow +\infty$ at the rates:
\begin{itemize}
\item If $q \in [0,\frac{1}{2}]$ then there exists $\rho\in ]0,1[$ such that
            \begin{equation}
                \Vert x_k-x_{\infty}\Vert=\mathcal{O}(\rho^k).
            \end{equation}
            
\item If $q \in ]\frac{1}{2},1[$ then
			\begin{equation}
            	\Vert x_k-x_{\infty}\Vert=\mathcal{O}\pa{k^{-\frac{1-q}{2q-1}}}.
            \end{equation}
\end{itemize}
\end{theorem}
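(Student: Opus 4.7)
The plan is to mirror the proof of Theorem~\ref{rates}, adapting the Lyapunov energy to match the one used in Theorem~\ref{convisihddisc}. Recall from the proof of Theorem~\ref{convisihddisc} the Lyapunov function $V_k = f(x_k) + \frac{\tilde{\alpha}}{2}\Vert v_k\Vert^2$ (with $v_k = x_k - x_{k-1}$ and $\tilde{\alpha} = \beta' L + \frac{1}{h^2}$), which satisfies the descent property \eqref{Vklyapisihd}, together with $(\Vert v_k\Vert)_{k\in\N}\in\ell^1(\N^*)$ and $x_k \to x_\infty \in \mathrm{crit}(f)$ under the hypotheses. Introduce the associated energy
\[
E : (x,v) \in \R^{2d} \mapsto f(x) + \frac{\tilde{\alpha}}{2}\Vert v\Vert^2 ,
\]
which is a separable quadratic perturbation of $f$. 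Since quadratics are {\L}ojasiewicz with exponent $1/2$, the calculus rule \cite[Theorem~3.3]{calckl} gives that $E$ satisfies the {\L}ojasiewicz property with exponent $q_E = \max(q, 1/2) \in [1/2,1[$ and desingularizing function $\psi_E(s) = c_E s^{1-q_E}$.

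Next, the goal is to derive the recursive estimate
\[
\Vert v_{k+1}\Vert \leq \lambda\Vert v_k\Vert + M(\Delta\psi)_k, \qquad (\Delta\psi)_k = \psi_E(\tilde{V}_k) - \psi_E(\tilde{V}_{k+1}),
\]
with $\tilde{V}_k = V_k - \tilde{L}$ and constants $\lambda \in (0,1)$, $M > 0$, by reproducing verbatim the argument used in item~(ii) of Theorem~\ref{convisehddisc}. The only new ingredient is a bound of the form $\Vert \nabla E(x_k, v_k)\Vert \leq \delta_3(\Vert v_{k+1}\Vert + \Vert v_k\Vert)$. Since $\nabla E(x_k, v_k) = (\nabla f(x_k), \tilde{\alpha} v_k)$, this reduces to controlling $\Vert \nabla f(x_k)\Vert$. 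The proof of Theorem~\ref{convisihddisc}(i) already yields $\Vert \nabla f(x_k + \beta' v_k)\Vert \leq \delta_2^{1/2}(\Vert v_{k+1}\Vert + \Vert v_k\Vert)$, and the triangle inequality combined with the global $L$-Lipschitz continuity of $\nabla f$ gives
\[
\Vert \nabla f(x_k)\Vert \leq \Vert \nabla f(x_k + \beta' v_k)\Vert + L\beta'\Vert v_k\Vert,
\]
which delivers the desired bound with an explicit $\delta_3$.

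Finally, introduce the tail sum $\Delta_k = \sum_{p \geq k}\Vert v_{p+1}\Vert \geq \Vert x_k - x_\infty\Vert$ and summon the exact same chain of inequalities as in the proof of Theorem~\ref{rates}: first the KL inequality \eqref{kldiscapp} applied to $E$ yields $\tilde{V}_k^{1-q_E} \leq c_E c_2 \delta_3^{(1-q_E)/q_E}(\Delta_{k-1} - \Delta_{k+1})^{(1-q_E)/q_E}$; plugging this into the summed version of the recursive estimate and raising to the power $q_E/(1-q_E)\geq 1$ produces an inequality of the form $\Delta_k^{q_E/(1-q_E)} \leq M_2(\Delta_{k-1} - \Delta_{k+1})$. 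The case analysis then closes the argument: $q \in [0,1/2]$ gives $q_E = 1/2$ and hence the geometric rate $\Delta_k = \mathcal{O}(\rho^k)$ with $\rho = (M_2/(1+M_2))^{1/2}$, while $q \in {]}1/2,1[$ gives $q_E = q$ and the polynomial rate $\Delta_k = \mathcal{O}(k^{-(1-q)/(2q-1)})$ via the standard integration-type comparison argument already carried out in Theorem~\ref{rates}.

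The only non-routine point is the bound on $\Vert \nabla E(x_k, v_k)\Vert$, which differs in form from the one used in the explicit scheme because here the algorithm provides information on $\nabla f$ evaluated at the shifted point $x_k + \beta' v_k$ rather than at $x_k$. Once the Lipschitz-continuity bridge is installed, the rest of the proof transfers line by line from Theorem~\ref{rates}.
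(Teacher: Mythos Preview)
Your proposal is correct and follows essentially the same approach as the paper, whose proof simply states that the argument is analogous to that of Theorem~\ref{rates} since the Lyapunov analysis of \eqref{isihd-disc} mirrors that of \eqref{isehd-disc}. You have correctly identified and made explicit the one adaptation the paper glosses over: the bound on $\Vert\nabla E(x_k,v_k)\Vert$ requires passing from $\nabla f(x_k+\beta'v_k)$ (controlled by the algorithm) to $\nabla f(x_k)$ via the global Lipschitz continuity of $\nabla f$, after which the rest of the proof of Theorem~\ref{rates} transfers verbatim.
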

\begin{proof}
Since the Lyapunov analysis of \eqref{isihd-disc} is analogous to that of \eqref{isehd-disc} (though the Lyapunov functions are different), the proof of this theorem is similar to the one of Theorem~\ref{rates}.
\end{proof}

%%%%%%%%%%%%%%%%%%%%
\subsubsection{General coefficients}
As discussed for the explicit case, the discrete scheme \eqref{isihd-disc} rises from a discretization of the ODE \eqref{eq:isihd}. However, the parameters $\alpha_k, s_k$ are linked to each other. We now consider \eqref{isihd-disc} where $\alpha_k, s_k$ are independent. Though this would hide somehow the physical interpretation of these parameters, it allows for some flexibility in their choice while preserving the  convergence behavior.

\begin{theorem}
    Let $f:\R^d\rightarrow\R$ be satisfying \eqref{H0} with $\nabla f$ being globally $L$-Lipschitz-continuous. Consider $(\alpha_k)_{k\in\N},(\beta_k)_{k\in\N},(s_k)_{k\in\N}$ to be three positive sequences, and the following algorithm with $x_0,x_1\in\R^d$:
        \begin{equation}\label{isihd-disc1}
        \begin{aligned}
            \begin{cases}
                y_k&=x_k+\alpha_k(x_k-x_{k-1}),\\
                x_{k+1}&=y_k-s_k\nabla f(x_k+\beta_k(x_k-x_{k-1})).
            \end{cases}
        \end{aligned}
    \end{equation}
    If there exists $\bar{s}>0$ such that: 
    \begin{itemize}
        \item $0 < \inf_{k\in\N} s_k \leq \sup_{k\in\N} s_k\leq \bar{s}<\frac{2}{L}$;
        \item $0<\sup_{k\in\N}\left(\beta_kL+\frac{\alpha_k}{s_k}\right) < \frac{1}{\bar{s}}-\frac{L}{2}$.
    \end{itemize}
       Then the following holds:
       \begin{enumerate}[label=(\roman*)]
           \item  $(\Vert\nabla f(x_k)\Vert)_{k\in\N}\in\ell^2(\N)$, and $(\Vert x_{k+1}-x_k\Vert)_{k\in\N}\in \ell^2(\N)$, hence 
           \[
           \lim_{k\rightarrow +\infty} \Vert\nabla f(x_k)\Vert=0.
           \]
           \item Moreover, if $\seq{x_k}$ is bounded and $f$ is definable, then $(\Vert x_{k+1}-x_{k}\Vert)_{k\in\N}\in\ell^1(\N)$ and $x_k$ converges (as $k\rightarrow +\infty$) to a critical point of $f$. 
       \item Furthermore, if $\alpha_k\equiv \alpha,\beta_k\equiv \beta,s_k\equiv s$, then the previous conditions reduce to 
    \[
	\alpha+sL\left(\beta+\frac{1}{2}\right)<1.
	\] 
	If, in addition, $\alpha\neq\frac{\beta}{\beta+1}$, $\alpha> \beta L s$, then for almost all $x_0,x_1\in\R^d$, $x_k$ converges (as $k\rightarrow +\infty$) to a critical point of $f$ that is not a strict saddle. Consequently, if $f$ satisfies the strict saddle property, for almost all $x_0, x_1\in\R^d$, $x_k$ converges (as $k\rightarrow +\infty$) to a local minimum of $f$.
       \end{enumerate}
\end{theorem}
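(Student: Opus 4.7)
The plan is to mirror the three-step structure of the proof of Theorem~\ref{convisihddisc}, adapting the Lyapunov argument to accommodate general (but uniformly controlled) sequences $(\alpha_k), (\beta_k), (s_k)$. Writing $v_k \eqdef x_k - x_{k-1}$, the update in \eqref{isihd-disc1} gives $\nabla f(x_k + \beta_k v_k) = (\alpha_k v_k - v_{k+1})/s_k$, which by a one-line computation (or by $1$-strong convexity as in \eqref{eqqq1}) yields
\begin{equation*}
\langle \nabla f(x_k + \beta_k v_k), v_{k+1}\rangle \leq -\frac{\Vert v_{k+1}\Vert^2}{s_k} + \frac{\alpha_k}{s_k}\langle v_k, v_{k+1}\rangle.
\end{equation*}
Combining this with the descent lemma applied to $f$ (at $x_k, x_{k+1}$) and with $\Vert \nabla f(x_k) - \nabla f(x_k + \beta_k v_k)\Vert \leq L\beta_k \Vert v_k\Vert$, and denoting $\tilde{\alpha}_k \eqdef \beta_k L + \alpha_k/s_k$, Cauchy-Schwarz produces
\begin{equation*}
f(x_{k+1}) \leq f(x_k) + \tilde{\alpha}_k \Vert v_k\Vert \Vert v_{k+1}\Vert - \left(\frac{1}{s_k} - \frac{L}{2}\right)\Vert v_{k+1}\Vert^2.
\end{equation*}

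For part (i), I apply Young's inequality with the balanced weight $\varepsilon = \tilde{\alpha}_k$, which yields $\tilde{\alpha}_k \Vert v_k\Vert\Vert v_{k+1}\Vert \leq \tfrac{\tilde{\alpha}_k}{2}(\Vert v_k\Vert^2 + \Vert v_{k+1}\Vert^2)$. Setting the Lyapunov function $V_k \eqdef f(x_k) + \tfrac{C_1}{2}\Vert v_k\Vert^2$ with $C_1 \eqdef \sup_k \tilde{\alpha}_k$, and using the hypothesis $C_1 < 1/\bar{s} - L/2$ together with $s_k \leq \bar{s}$, one arrives at
\begin{equation*}
V_{k+1} \leq V_k - \delta \Vert v_{k+1}\Vert^2, \qquad \delta \eqdef \frac{1}{\bar{s}} - \frac{L}{2} - C_1 > 0.
\end{equation*}
Since $f$ is bounded below and $V_k$ is non-increasing, $\lim_k V_k$ exists, and telescoping gives $(\Vert v_k\Vert)_k \in \ell^2$. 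The gradient estimate proceeds as in Theorem~\ref{convisihddisc}: the bound $\Vert \nabla f(x_k + \beta_k v_k)\Vert \leq (\Vert v_{k+1}\Vert + \alpha_k \Vert v_k\Vert)/s_k$ together with $\inf_k s_k > 0$, $\sup_k \alpha_k < \infty$ (a consequence of the second bullet and $s_k \geq \inf s > 0$), and the Lipschitz comparison $\Vert \nabla f(x_k) - \nabla f(x_k + \beta_k v_k)\Vert \leq L\beta_k \Vert v_k\Vert$ gives $(\Vert \nabla f(x_k)\Vert)_k \in \ell^2$.

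For part (ii), the K{\L} argument is literally that of Theorem~\ref{convisihddisc}: definability transfers to the Lyapunov function $E(x,v) = f(x) + \tfrac{C_1}{2}\Vert v\Vert^2$ of \eqref{eq:energEdisc}, the cluster set $\mathfrak{C}(\seq{x_k})$ satisfies \eqref{xkbdisc}, and the uniformized K{\L} inequality combined with the descent property $V_{k+1} \leq V_k - \delta \Vert v_{k+1}\Vert^2$ and the bound $\Vert \nabla E(x_k,v_k)\Vert \lesssim \Vert v_{k+1}\Vert + \Vert v_k\Vert$ produces an inequality of the form \eqref{estvk}, from which $\ell^1$-summability of $\seq{v_k}$ follows by the usual Young-plus-concavity manipulation. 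The limit is a critical point since $\Vert \nabla f(x_k)\Vert \to 0$.

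For part (iii), substituting the constants reduces the second bullet to $\beta L + \alpha/s < 1/s - L/2$, equivalently $\alpha + sL(\beta + 1/2) < 1$. The trap avoidance step follows the same template as the third part of Theorem~\ref{convisihddisc}: set $g(x_+,x_-) = \bigl((1+\alpha)x_+ - \alpha x_- - s\nabla f(x_+ + \beta(x_+ - x_-)), x_+\bigr)$ (note that here $\beta$ plays the role previously denoted $\beta'$), apply Lemma~\ref{block} to obtain $\det J_g(x_+,x_-) = \det(\alpha I_d - s\beta \nabla^2 f(x_+ + \beta(x_+-x_-)))$, which is nonzero as long as $\alpha > \beta L s$; then at a strict saddle $x$ with $\lambda_{\min}(\nabla^2 f(x)) = -\eta < 0$, the $2 \times 2$ block reduction leads to the quadratic $\lambda^2 - \lambda((1+\alpha) + \eta s(1+\beta)) + \alpha - s\beta\eta = 0$ (after discarding the degenerate root $\lambda = \beta/(\beta+1)$, which would force $\alpha = \beta/(\beta+1)$, excluded by hypothesis); the same discriminant-in-$\alpha$ argument shows $\Delta_\lambda > 0$ and the larger root exceeds $1$, so $\hat{\mathcal{X}} \subset \mathcal{A}_g^{\star}$, and \cite[Corollary~1]{localminima} concludes.

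The main obstacle I anticipate is tracking the right uniform constants in part (i) so that the Lyapunov function has a universal coefficient $C_1$ that both dominates all $\tilde{\alpha}_k$ and stays strictly below $\inf_k(1/s_k - L/2)$; the assumption in the theorem is tailored precisely to make this possible in one shot via the choice $\varepsilon = \tilde{\alpha}_k$, so the argument does go through cleanly.
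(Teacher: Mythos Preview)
Your proposal is correct and follows exactly the approach the paper indicates: adjust the key inequality \eqref{eqqq2} to read $\langle \nabla f(x_k+\beta_k v_k),v_{k+1}\rangle \leq -\Vert v_{k+1}\Vert^2/s_k + (\alpha_k/s_k)\langle v_k,v_{k+1}\rangle$ without using the specific dependency $\alpha_k/s_k = 1/h^2$, then rerun the Lyapunov, K{\L}, and center-stable-manifold arguments of Theorem~\ref{convisihddisc} with $\beta$ playing the role of $\beta'$. One small typo: in your quadratic for part~(iii) the constant term should be $\alpha + s\beta\eta$ (not $\alpha - s\beta\eta$) once you substitute $\eta_i = -\eta$, matching \eqref{quaddi}; the discriminant and $\lambda>1$ analysis are otherwise identical.
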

%\begin{remark}
%    If $\alpha_k\equiv \alpha,\beta_k\equiv \beta,s_k\equiv s$, then the conditions to verify in the previous Theorem reduce to $$\alpha+sL\left(\beta+\frac{1}{2}\right)<1.$$
%\end{remark}
%\begin{remark}
%    If $\alpha_k,\beta_k,s_k$ are given as in \eqref{isihd-disc}, \ie $\alpha_k=\frac{1}{1+\gamma_k h},\beta_k\equiv\frac{\beta}{h}, s_k=h^2 \alpha_k$, then the first four requirements of the previous Theorem reduce to $\beta+\frac{h}{2}<\frac{c}{L}$ (recall that $c$ is such that $c\leq\gamma_k$).
%\end{remark}
\begin{proof}
Adjusting equation \eqref{eqqq2} to our setting (\ie not using the dependency of $\alpha_k, s_k$) we get an analogous proof to the one of Theorem~\ref{convisihddisc}. We omit the details.
\end{proof}

\section{Adaptive Stepsize via Backtracking}\label{sec:backtracking}
To remove the global Lipschitz assumption on the gradient of $f$—which is often unrealistic in practical settings such as machine learning—or even improve the estimate of the Lipschitz constant, we propose the following discrete schemes, inspired by \eqref{isehd-disc1}, \eqref{isihd-disc1}, but now incorporating an adaptive stepsize via a backtracking procedure that allows us to leverage techniques relying only on the local Lipschitz continuity of $\nabla f$.\\

\subsection{Explicit Hessian Damping}\label{sec:explicitbacktracking}
Let $f:\R^d\rightarrow\R$ be lower bounded, differentiable, and $\nabla f$ locally Lipschitz continuous. Consider $\alpha,\beta:\R_+\rightarrow\R_+$, such that $\lim_{x\rightarrow 0^+} \alpha(x)+\beta(x)=0$, $s_0>0, \delta\in ]0,2[, \rho\in ]0,1[$, and the following algorithm with initial data $x_0,x_1\in\R^d$:
        \begin{equation}\label{isehd-disc-back}
        \begin{aligned}
            \begin{cases}
                y_k&=x_k+\alpha(s_k)(x_k-x_{k-1})-\beta(s_k)(\nabla f(x_k)-\nabla f(x_{k-1})),\\
                x_{k+1}&=y_k-s_k\nabla f(x_k),
            \end{cases}
        \end{aligned}
    \end{equation}
where $s_k=\rho^{i_k}s_0$, and $i_k$ is the smallest nonnegative integer such that
\begin{align}\label{cond1back}
    f(x_{k+1})-f(x_k)-\langle \nabla f(x_k),x_{k+1}-x_k\rangle\leq \frac{\delta}{2s_k}\Vert x_{k+1}-x_k\Vert^2,
\end{align}
and 
\begin{align}\label{cond2back}
    \Vert \nabla f(x_{k+1})-\nabla f(x_k)\Vert\leq \frac{\delta}{s_k}\Vert x_{k+1}-x_k\Vert.
\end{align}
The following lemma is an adaptation of \cite[Lemma 1.4.3]{implicitreg}, where the authors study the finite termination of \eqref{isehd-disc-back} in the case $\alpha(x) = \alpha_0 x$ and $\beta(x) = \beta_0 x^2$. For completeness, we include the proof below.

\begin{lemma}\label{finiteter}
    The backtracking procedure in \eqref{isehd-disc-back} terminates in a finite number of iterations and $\bar{s}=\sup_{k\in\N} s_k\leq s_0$. Moreover, if $(x_k)_{k\in\N}$ is bounded, then $\ubar{s}=\inf_{k\in\N}s_k >0$.
\end{lemma}
\begin{proof}
Denote the Bregman divergence of $f$ as $$D_f(\tilde{x},x)\eqdef f(\tilde{x})-f(x)-\langle\nabla f(x),\tilde{x}-x\rangle.$$
    We write generically each iteration of Algorithm \eqref{isehd-disc-back} as $$x^+(s_i)\eqdef x+\alpha(s_i)(x-x_{-})-\beta(s_i)(\nabla f(x)-\nabla f(x_{-}))-s_i\nabla f(x), \quad \forall i\in\N,$$
    where $s_i=\rho^i s_0$. Since $\lim_{x\rightarrow 0^+} \alpha(x)+\beta(x)=0$ and $\rho<1$, we have that $\lim_{i\rightarrow\infty}x^+(s_i)=x$. Thus $\forall \varepsilon>0, \exists i_{\varepsilon}>0$ such that $x^+(s_i)\subset B(x,\varepsilon), \forall i\geq i_{\varepsilon}$. It follows from the local Lipschitz continuity of $\nabla f$ and the descent lemma that $\exists L_{\varepsilon}>0$ such that $\forall i\geq i_{\varepsilon}$,
    \begin{align}\label{locallip}
        \Vert \nabla f(x^+(s_i))-\nabla f(x)\Vert\leq L_{\varepsilon}\Vert x^+(s_i)-x\Vert, \quad \text{and} \quad D_f(x^+(s_i),x)\leq \frac{L_{\varepsilon}}{2}\Vert x^+(s_i)-x\Vert^2.
    \end{align}
    If we assume by contradiction that the backtracking procedure does not terminate, this would imply that for all $i\geq 0$,
    \begin{align*}
        s_i\Vert \nabla f(x^+(s_i))-\nabla f(x)\Vert>\delta\Vert x^+(s_i)-x\Vert, \quad \text{or} \quad s_i D_f(x^+(s_i),x)>\frac{\delta}{2}\Vert x^+(s_i)-x\Vert^2.
    \end{align*}
    This together with \eqref{locallip} gives us that for all $i\geq i_{\varepsilon}$,
    \begin{align*}
        \delta \Vert x^+(s_i)-x\Vert<s_i\Vert \nabla f(x^+(s_i))&-\nabla f(x)\Vert\leq s_iL_{\varepsilon}\Vert  x^+(s_i)-x\Vert, \\
        &\text{or}\\
        \frac{\delta}{2} \Vert x^+(s_i)-x\Vert^2<s_i D_f(x^+&(s_i),x)\leq \frac{s_iL_{\varepsilon}}{2}\Vert  x^+(s_i)-x\Vert^2.
    \end{align*}
    In both cases, we obtain that $\delta< s_iL_{\varepsilon}$. Passing to the limit as $i\rightarrow\infty$ yields $\delta=0$, a contradiction.\\
    The fact that $\bar{s}\leq s_0$ is direct. We now show that $\ubar{s}>0$ when the iterates are bounded. There exists a convex bounded set $\Omega$, such that $(x_k)_{k\in\N}\subset \Omega$. The descent lemma implies that there exists $L_{\Omega}>0$ such that for all $k\in\N$,
    \begin{equation}\label{backlocal}
        D_f(x_{k+1},x_k)\leq \frac{L_{\Omega}}{2}\Vert x_{k+1}-x_k\Vert^2.
    \end{equation}
    We can show by induction that for all $k\in\N$,
    \begin{equation}\label{indback}
        s_k\geq \min\left(s_0,\frac{\rho\delta}{L_{\Omega}} \right)>0.
    \end{equation}
    It is obviously true for $k=0$. Assume that \eqref{indback} holds for some $k\geq 1$, if $i_{k+1}\leq i_k$, then $s_{k+1}\geq s_k$ and we could conclude. If $i_{k+1}\geq i_k+1$, we suppose by contradiction that $s_{k+1}<\min\left(s_0,\frac{\rho\delta}{L_{\Omega}}\right)$, this implies that $L_{\Omega}<\frac{\delta}{\rho^{i_{k+1}-1}s_0}$, by \eqref{backlocal} and the previous observation, we get that \begin{align*}
        D_f(x_{k+2},x_{k+1})\leq \frac{L_{\Omega}}{2}\Vert x_{k+2}-x_{k+1}\Vert^2< \frac{\delta}{2\rho^{i_{k+1}-1}s_0}\Vert x_{k+2}-x_{k+1}\Vert^2.%\leq \frac{\delta}{2\rho^{i_{k+1}}s_0}\Vert x_{k+2}-x_{k+1}\Vert^2.
    \end{align*}
    Which implies that the backtracking procedure ends at or before iteration $i_{k+1}-1$, which contradicts the fact that the procedure was supposed to terminate at $i_{k+1}$. This concludes the proof.
\end{proof}
\begin{theorem}\label{convisehddiscgencoeff-back}
     Consider the algorithm \eqref{isehd-disc-back} under the previously described  setting. Let $\bar{s}=\sup_{k\in\N} s_k\leq s_0$. If 
\begin{align}\label{conditionback}
0<\sup_{k\in\N}\left(\frac{\alpha(s_k)}{s_k}+\frac{\beta(s_k)\delta}{s_k^2}\right) < \frac{1}{\bar{s}}\left(1-\frac{\delta}{2}\right),
\end{align}
       then the following holds:
       \begin{enumerate}[label=(\roman*)]
           \item $(\Vert\nabla f(x_k)\Vert)_{k\in\N}\in\ell^2(\N)$, and $(\Vert x_{k+1}-x_k\Vert)_{k\in\N}\in \ell^2(\N)$, and thus 
           \[
           \lim_{k\rightarrow +\infty} \Vert\nabla f(x_k)\Vert=0.
           \]
           \item Moreover, if the sequence $ \seq{x_k} $ is bounded, then $ 0 < \inf_{k \in \N} s_k $; and if, in addition, $ f $ is definable, then $ \seq{\|x_{k+1} - x_k\|} \in \ell^1(\N) $, and $ x_k $ converges (as $ k \to +\infty $) to a critical point of $ f $, denoted by $ x_\infty $.

           \item Furthermore, if $f$ satisfies the {\L}ojasiewicz property with exponent $q\in [0,1[$, then the convergence of $x_k$ converges to $x_\infty$ at the rates:
           \begin{itemize}
            \item If $q \in [0,\frac{1}{2}]$ then there exists $\rho_0\in ]0,1[$ such that
            \begin{equation*}
                \Vert x_k-x_{\infty}\Vert=\mathcal{O}(\rho_0^k).
            \end{equation*}
            
            \item If $q \in ]\frac{1}{2},1[$ then
			\begin{equation*}
            	\Vert x_k-x_{\infty}\Vert=\mathcal{O}\pa{k^{-\frac{1-q}{2q-1}}}.
            \end{equation*}
\end{itemize}
           %\item Furthermore, if $\alpha_k\equiv \alpha,\beta_k\equiv \beta,s_k\equiv s$, then the previous conditions reduce to 
           %\[
           %\alpha+\beta L+\frac{sL}{2}<1.
           %\] 
           %If, in addition, $\alpha\neq \frac{\beta}{\beta+s}$, and $\alpha> \beta L$, then for almost all $x_0, x_1\in\R^d$, $x_k$ converges (as $k\rightarrow +\infty$) to a critical point of $f$ that is not a strict saddle. Consequently, if $f$ satisfies the strict saddle property, for almost all $x_0, x_1\in\R^d$, $x_k$ converges (as $k\rightarrow +\infty$) to a local minimum of $f$.
       \end{enumerate}
\end{theorem}

\begin{remark}\label{notcircular}
If $\alpha(x)=\alpha_0 x,\beta(x)=\beta_0 x^2$  then the condition \eqref{conditionback} reduce to $0<\alpha_0+\beta_0\delta<\frac{1}{\bar{s}}\left(1-\frac{\delta}{2}\right)$. In general, checking the condition \eqref{conditionback} directly can lead to a circular reasoning issue, since the sequence $s_k$ depends on the iterates $x_k$, and vice versa, while conclusions are drawn about the iterates themselves. However, we observe that it is sufficient to verify the condition
$$
0 < \sup_{x \in [0, s_0]} \frac{\alpha(x)}{x} + \frac{\beta(x) \delta}{x^2} < \frac{1}{\bar{s}} \left( 1 - \frac{\delta}{2} \right),
$$
where $\lim_{x\rightarrow 0^+}\alpha(x)+\beta(x)=0$, which is considerably simpler to verify in practice.

\end{remark}

\begin{proof}
Let $\Lambda_k=\frac{\alpha(s_k)}{s_k}+\frac{\beta(s_k)\delta}{s_k^2}$ and $\bar{\Lambda}=\sup_{k\in\N}\Lambda_k$. Adjusting equation \eqref{eq2} to this setting and using \eqref{cond1back}, we obtain \begin{align*}
    f(x_k)\leq f(x_{k+1})+\frac{\alpha(s_k)}{s_k}\langle v_k,v_{k+1}\rangle-\frac{\beta(s_k)}{s_k}\langle v_{k+1},\nabla f(x_k)-\nabla f(x_{k-1})\rangle-\frac{1}{\bar{s}}\left(1-\frac{\delta}{2}\right)\Vert v_{k+1}\Vert^2.
\end{align*}
Using Cauchy-Schwarz inequality and \eqref{cond2back} we get
\begin{align*}
    f(x_{k+1})\leq f(x_{k})+\Lambda_k\Vert v_k\Vert\Vert v_{k+1}\Vert-\frac{1}{\bar{s}}\left(1-\frac{\delta}{2}\right)\Vert v_{k+1}\Vert^2.
\end{align*}
Applying Young's inequality, we end up with
\begin{align*}
    f(x_{k+1})\leq f(x_{k})+\frac{\Lambda_k^2\Vert v_k\Vert^2}{2\varepsilon_k}+\varepsilon_k\frac{\Vert v_{k+1}\Vert^2}{2}-\frac{1}{\bar{s}}\left(1-\frac{\delta}{2}\right)\Vert v_{k+1}\Vert^2.
\end{align*}
Choosing $\varepsilon_k=\Lambda_k$ and adding the term $\frac{\bar{\Lambda}}{2}\Vert v_{k+1}\Vert^2$ to both sides of the inequality, 
\begin{align*}
    f(x_{k+1})+\frac{\bar{\Lambda}}{2}\Vert v_{k+1}\Vert^2&\leq f(x_{k})+\frac{\Lambda_k}{2}\Vert v_k\Vert^2+\left[\frac{\bar{\Lambda}}{2}+\frac{\Lambda_k}{2}-\frac{1}{\bar{s}}\left(1-\frac{\delta}{2}\right)\right]\Vert v_{k+1}\Vert^2\\
    &\leq f(x_{k})+\frac{\bar{\Lambda}}{2}\Vert v_k\Vert^2+\left[\bar{\Lambda}-\frac{1}{\bar{s}}\left(1-\frac{\delta}{2}\right)\right]\Vert v_{k+1}\Vert^2.
\end{align*}
And we can conclude the first two points as in the proof of Theorem \ref{convisehddisc}. For the convergence rates, we follow the exact same strategy as in the proof of Theorem \ref{rates}, therefore, we omit the proof.

\end{proof}

\subsection{Implicit Hessian Damping}
Let $f:\R^d\rightarrow\R$ be lower bounded, differentiable, and $\nabla f$ locally Lipschitz continuous. Consider $\alpha,\beta:\R_+\rightarrow\R_+$, such that $\lim_{x\rightarrow 0^+} \alpha(x)=0$, $s_0>0, \delta\in ]0,2[, \rho\in ]0,1[$, and the following algorithm with initial data $x_0,x_1\in\R^d$:
        \begin{equation}\label{isihd-disc-back}
        \begin{aligned}
            \begin{cases}
                y_k&=x_k+\alpha(s_k)(x_k-x_{k-1}),\\
                x_{k+1}&=y_k-s_k\nabla f(x_k+\beta(s_k)(x_k-x_{k-1})),
            \end{cases}
        \end{aligned}
    \end{equation}
where $s_k=\rho^{i_k}s_0$, and $i_k$ is the smallest nonnegative integer such that
\begin{align}\label{cond1backimp}
    f(x_{k+1})-f(x_k)-\langle \nabla f(x_k),x_{k+1}-x_k\rangle\leq \frac{\delta}{2s_k}\Vert x_{k+1}-x_k\Vert^2,
\end{align}
and 
\begin{align}\label{cond2backimp}
    \Vert \nabla f(x_{k+1})-\nabla f(x_k)\Vert\leq \frac{\delta}{s_k}\Vert x_{k+1}-x_k\Vert.
\end{align}

\begin{lemma}
     The backtracking procedure in \eqref{isihd-disc-back} terminates in a finite number of iterations and $\bar{s}=\sup_{k\in\N} s_k\leq s_0$. Moreover, if $(x_k)_{k\in\N}$ is bounded, then $\ubar{s}=\inf_{k\in\N}s_k >0$.
\end{lemma}
\begin{proof}
   The proof follows the same spirit as that of Lemma~\ref{finiteter}; therefore, we omit it.
\end{proof}

\begin{theorem}\label{convisihddiscgencoeff-back}
     Consider the algorithm \eqref{isihd-disc-back} under the previously described setting. Let $\bar{s}=\sup_{k\in\N} s_k\leq s_0$. If 
\begin{align}\label{conditionback1}
0<\sup_{k\in\N}\frac{\alpha(s_k)+\beta(s_k)\delta}{s_k}< \frac{1}{\bar{s}}\left(1-\frac{\delta}{2}\right),
\end{align}
       then the following holds:
       \begin{enumerate}[label=(\roman*)]
           \item $(\Vert\nabla f(x_k)\Vert)_{k\in\N}\in\ell^2(\N)$, and $(\Vert x_{k+1}-x_k\Vert)_{k\in\N}\in \ell^2(\N)$, and thus 
           \[
           \lim_{k\rightarrow +\infty} \Vert\nabla f(x_k)\Vert=0.
           \]
           \item Moreover, if the sequence $ \seq{x_k} $ is bounded, then $ 0 < \inf_{k \in \N} s_k $; and if, in addition, $ f $ is definable, then $ \seq{\|x_{k+1} - x_k\|} \in \ell^1(\N) $, and $ x_k $ converges (as $ k \to +\infty $) to a critical point of $ f $, denoted by $ x_\infty $.

           \item Furthermore, if $f$ satisfies the {\L}ojasiewicz property with exponent $q\in [0,1[$, then the convergence of $x_k$ converges to $x_\infty$ at the rates:
           \begin{itemize}
            \item If $q \in [0,\frac{1}{2}]$ then there exists $\rho_0\in ]0,1[$ such that
            \begin{equation*}
                \Vert x_k-x_{\infty}\Vert=\mathcal{O}(\rho_0^k).
            \end{equation*}
            
            \item If $q \in ]\frac{1}{2},1[$ then
			\begin{equation*}
            	\Vert x_k-x_{\infty}\Vert=\mathcal{O}\pa{k^{-\frac{1-q}{2q-1}}}.
            \end{equation*}
\end{itemize}
       \end{enumerate}
\end{theorem}
\begin{remark}
If $\alpha(x)=\alpha_0 x,\beta(x)=\beta_0 x$  then the condition \eqref{conditionback1} reduce to $0<\alpha_0+\beta_0\delta<\frac{1}{\bar{s}}\left(1-\frac{\delta}{2}\right)$. As in Remark \ref{notcircular}, we observe that to verify the condition \eqref{conditionback1}, it suffices to check
$$
0 < \sup_{x \in [0, s_0]} \frac{\alpha(x)+\beta(x)\delta}{x} < \frac{1}{\bar{s}} \left( 1 - \frac{\delta}{2} \right),
$$
where $\lim_{x\rightarrow 0^+}\alpha(x)=0$, which is considerably simpler to verify in practice.
\end{remark}
\begin{proof}
    Let $\Lambda_k=\frac{\alpha(s_k)+\beta(s_k)\delta}{s_k}$ and $\bar{\Lambda}=\sup_{k\in\N} \Lambda_k$. Adjusting the equation \eqref{imp:isihd} to our setting we get \begin{align*}
        f(x_{k+1})\leq f(x_k)+\Lambda_k\Vert v_k\Vert\Vert v_{k+1}\Vert-\frac{1}{\bar{s}}\left(1-\frac{\delta}{2}\right)\Vert v_{k+1}\Vert^2.
    \end{align*}
    Using Young's inequality we get \begin{align*}
        f(x_{k+1})\leq f(x_k)+\frac{\Lambda_k^2}{2\varepsilon_k}\Vert v_k\Vert^2+\frac{\varepsilon_k}{2}\Vert v_{k+1}
        \Vert^2-\frac{1}{\bar{s}}\left(1-\frac{\delta}{2}\right)\Vert v_{k+1}\Vert^2.
    \end{align*}
    Choosing $\varepsilon_k=\Lambda_k$ and adding the term $\frac{\bar{\Lambda}}{2}\Vert v_{k+1}\Vert^2$ at both sides of the inequality we obtain

\begin{align*}
        f(x_{k+1})+\frac{\bar{\Lambda}}{2}\Vert v_{k+1}\Vert^2&\leq f(x_k)+\frac{\Lambda_k}{2}\Vert v_k\Vert^2+\left(\frac{\Lambda_k}{2}+\frac{\bar{\Lambda}}{2}-\frac{1}{\bar{s}}\left(1-\frac{\delta}{2}\right)\right)\Vert v_{k+1}\Vert^2\\
        &\leq f(x_k)+\frac{\bar{\Lambda}}{2}\Vert v_k\Vert^2+\left(\bar{\Lambda}-\frac{1}{\bar{s}}\left(1-\frac{\delta}{2}\right)\right)\Vert v_{k+1}\Vert^2.
    \end{align*}
And we conclude as in the proof of Theorem \ref{convisehddisc}. The convergence rates follow the strategy presented in the proof of Theorem \ref{rates} and we omit the proof.
\end{proof}

%%%%%%%%%%%%%%%%%%%%%%%%%%%%%%%%%%%%%%%%%%%%%%%%%%%%%%
\section{Numerical experiments}\label{sec:num}
%%%%%%%%%%%%%%%%%%%%%%%%%%%%%%%%%%%%%%%%%%%%%%%%%%%%%%

Before describing the numerical experiments, let us start with a few observations on the computational complexity and memory storage requirement of \eqref{isehd-disc} and \eqref{isihd-disc}. 
The number of gradient access per iteration is the same for Gradient Descent (GD), discrete HBF, \eqref{isehd-disc} and \eqref{isihd-disc} is the same (one per iteration). However, the faster convergence (in practice) of inertial methods comes at the cost of storing previous information. For the memory storage requirement per iteration, GD stores only the previous iterate, the discrete HBF and \eqref{isihd-disc} store the two previous iterates, while \eqref{isehd-disc} additionally stores the previous gradient iterate as well. This has to be kept in mind when comparing these algorithms especially for in very high dimensional settings.\\

We will illustrate our findings with two numerical experiments. The first one is the optimization of the Rosenbrock function in $\R^2$, while the second one is on image deblurring. We will apply the proposed discrete schemes \eqref{isehd-disc} and \eqref{isihd-disc} and compare them with gradient descent and the (discrete) HBF. We will call $\Vert\nabla f(x_k)\Vert$ the residual.

%%%%%%%%%%%%%%%%%%%%%%%%%%%%%%
\subsection{Rosenbrock function}
%\todo{The gradient of the Rosenbrock function is not globally Lipschitz continuous ($f$ is quartic in $x$). Say that it is algebraic (hence K{\L}). Can we show it has trap avoidance ?}
We will minimize the classical Rosenbrock function, \ie,
\[
f: (x,y) \in \R^2 \mapsto (1-x)^2+100(y-x^2)^2,
\]
with global minimum at $(x^{\star},y^{\star})=(1,1)$.

\smallskip

We notice that its global minimum is the only critical point. Therefore this function is Morse and thus satisfies the {\L}ojasiewicz inequality with exponent $\frac{1}{2}$ (see Remark~\ref{rem}). Consider \eqref{eq:isehd} and \eqref{eq:isihd} with the Rosenbrock function as the objective, $\gamma:\R_+\rightarrow\R_+$ satisfying \eqref{gamma} (\ie $0<c\leq \gamma(t)\leq C<+\infty$), and $0<\beta<\frac{2c}{C^2}$. By Theorems~\ref{convisehd}-\ref{rates} for \eqref{eq:isehd}, and Theorems~\ref{convisihd}-\ref{ratesi} for \eqref{eq:isihd}, we get that the solution trajectories of these dynamics will converge to the global minimum eventually at a linear rate. Due to the low dimensionality of this problem, we could use an ODE solver to show numerically these results. However, we will just the iterates generated by our proposed algorithmic schemes \eqref{isehd-disc} and \eqref{isihd-disc}. Although the gradient of the objective is not globally Lipschitz continuous, our proposed algorithmic schemes worked very well for $h$ small enough. This suggests that we may relax this hypothesis in future work, as proposed in \cite{panageas, cedric} for GD. 

\smallskip

We applied \eqref{isehd-disc} and \eqref{isihd-disc} with $\beta \in \{0.02, 0.04\}$, $\gamma(t)\equiv \gamma_0=3$, $h=10^{-3}$ and initial conditions $x_0=(-1.5,0), x_1=x_0$. We compared our algorithms with GD and HBF (with the same initial conditions) after $2*10^4$ iterations:
\begin{equation}\label{ngd}\tag{GD}
    x_{k+1}=x_k-\frac{h^2}{1+\gamma_0 h}\nabla f(x_k), 
\end{equation}
and
\begin{equation}\tag{HBF}\label{nhbf}
        \begin{aligned}
            \begin{cases}
                y_k&=x_k+\frac{1}{1+\gamma_0 h}(x_k-x_{k-1}),\\
                x_{k+1}&=y_k-\frac{h^2}{1+\gamma_0 h}\nabla f(x_k).
            \end{cases}
        \end{aligned}
    \end{equation}
The behavior of all algorithms is depicted in Figures~\ref{rose0} and \ref{rose}.
\begin{figure}[H]
    \centering
    \begin{subfigure}[b]{0.32\textwidth}
         \centering
         \includegraphics[width=\textwidth]{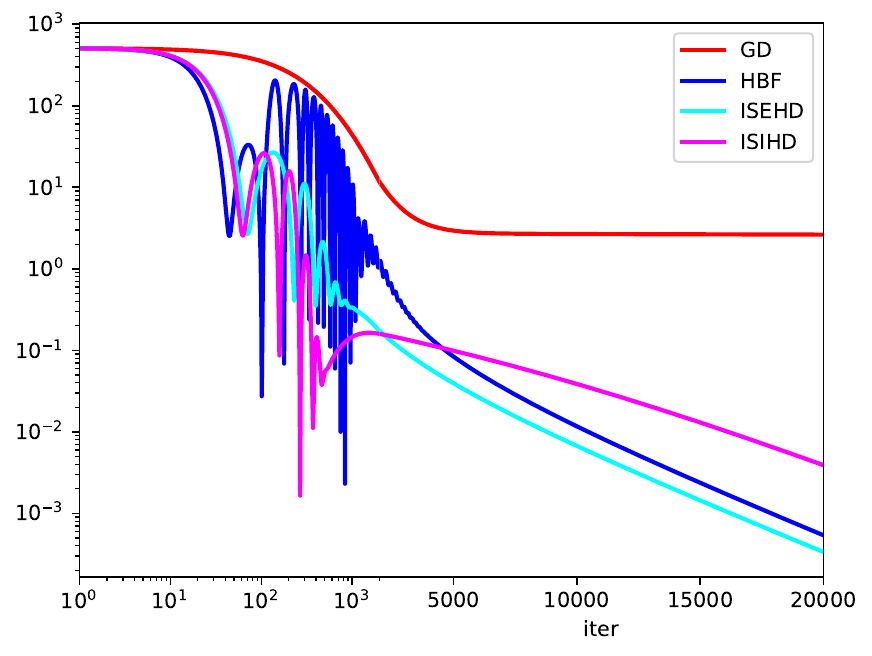}
         \caption{Residual vs Iteration.}
     \end{subfigure}
    \begin{subfigure}[b]{0.32\textwidth}
         \centering
         \includegraphics[width=\textwidth]{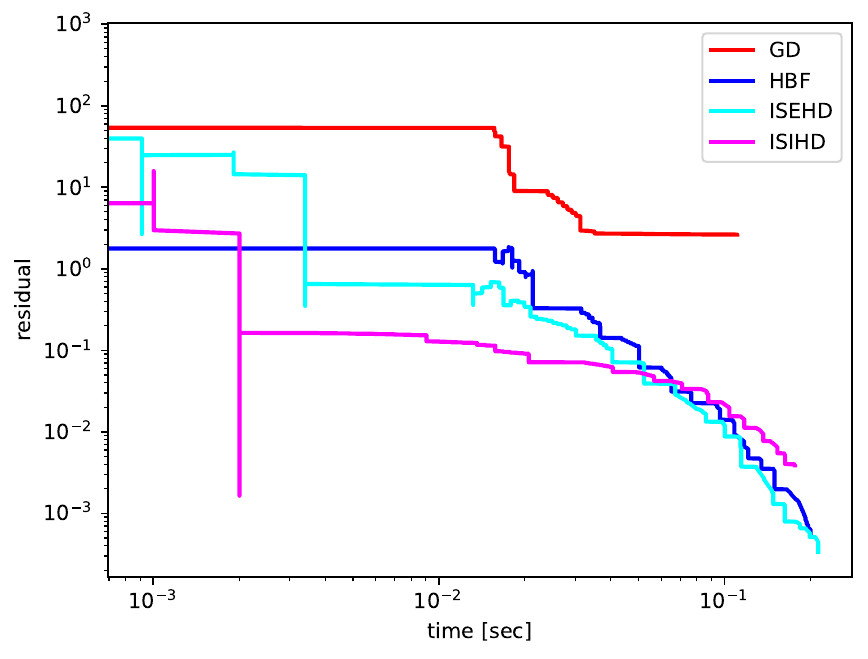}
         \caption{Residual vs Time.}
     \end{subfigure}
     \begin{subfigure}[b]{0.32\textwidth}
         \centering
         \includegraphics[width=\textwidth]{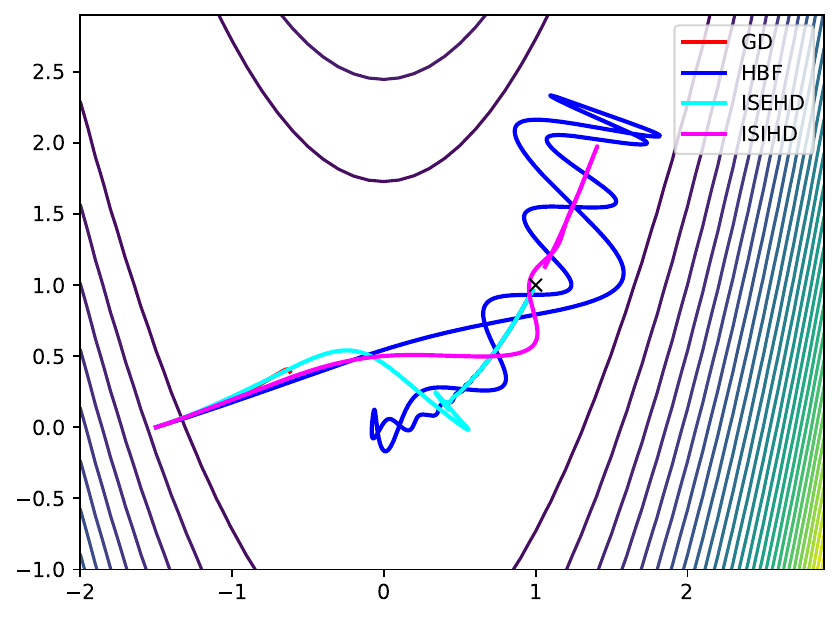}
         \caption{Trajectory.}
     \end{subfigure}
    \caption{Results on the Rosenbrock function with $\beta=0.02$.}
    \label{rose0}
\end{figure}
\begin{figure}[H]
    \centering
    \begin{subfigure}[b]{0.32\textwidth}
         \centering
         \includegraphics[width=\textwidth]{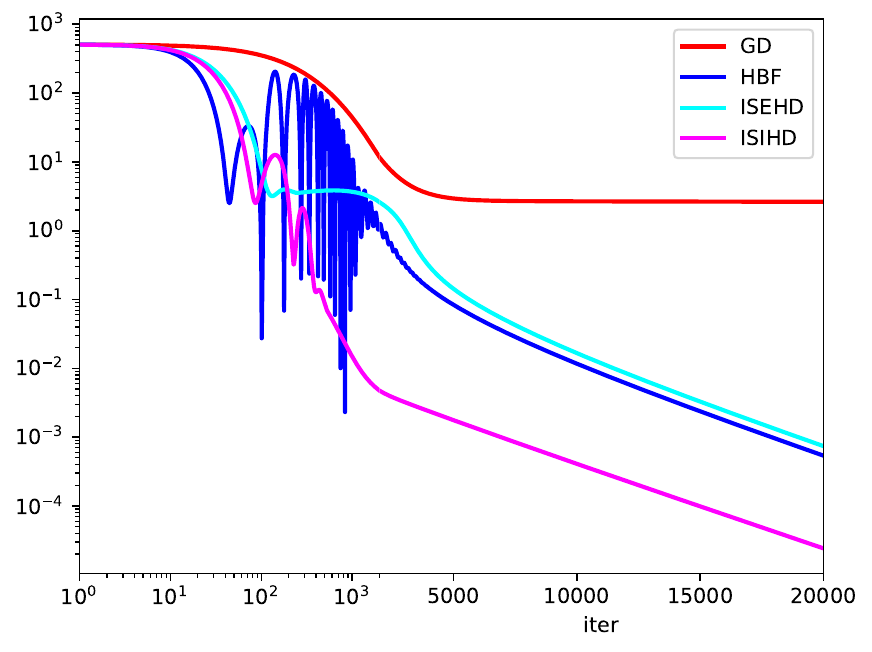}
         \caption{Residual vs Iteration.}
     \end{subfigure}
    \begin{subfigure}[b]{0.32\textwidth}
         \centering
         \includegraphics[width=\textwidth]{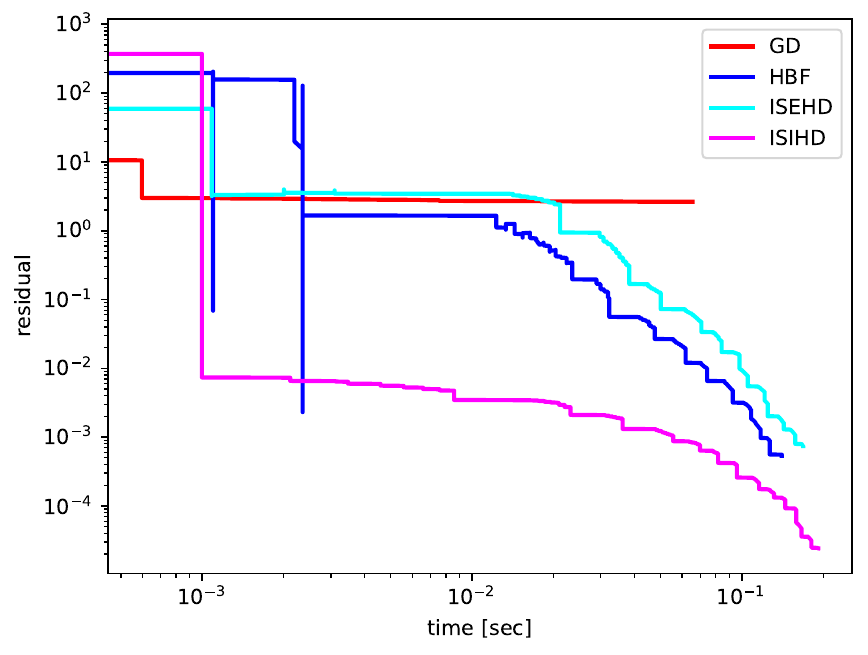}
         \caption{Residual vs Time.}
     \end{subfigure}
     \begin{subfigure}[b]{0.32\textwidth}
         \centering
         \includegraphics[width=\textwidth]{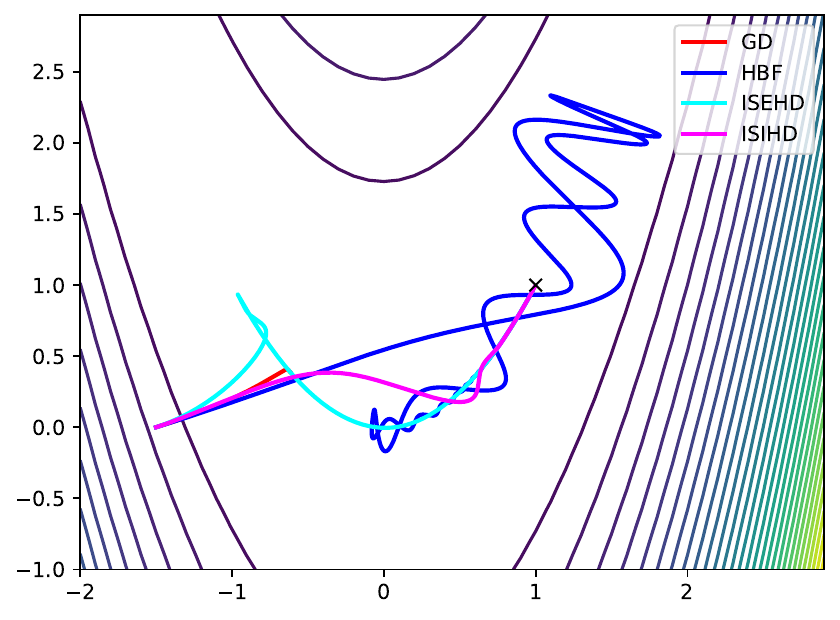}
         \caption{Trajectory.}
     \end{subfigure}
    \caption{Results on the Rosenbrock function with $\beta=0.04$.}
    \label{rose}
\end{figure}

We can notice that the iterates generated by \eqref{isehd-disc} and \eqref{isihd-disc} oscillate much less towards the minimum than \eqref{nhbf}, and this damping effect is more notorious as $\beta$ gets larger. In the case $\beta=0.02$, we observe there are still some oscillations, which benefit the dynamic generated by \eqref{isehd-disc} more than the one generated by \eqref{isihd-disc}. However, we have the opposite effect in the case $\beta=0.04$, where the oscillations are more damped. These three methods (\eqref{isehd-disc}, \eqref{isihd-disc}, \eqref{nhbf}) share a similar asymptotic convergence rate, which is linear as predicted (recall $f$ is {\L}ojasiewicz with exponent $1/2$), and they are significantly faster than \eqref{ngd}.
%\todo{The results need some comments and discussion.}
    
%%%%%%%%%%%%%%%%%%%%%%%%%%%%%%
\subsection{Image Deblurring}
%\begin{comment}
%In the context of greyscale images with length and height (in pixels) of $n_x\in\N$ and $n_y\in\N$, respectively, and  $N=n_x n_y$ the total number of pixels. We consider $A\in\R^{N\times N}$ to be a blur operator (with reflecting boundary conditions) with rows $(A_i)_{i=1}^N$, $b\in\R^N$ a noisy observation of an objective flattened image ($u^{\star}\in\R^N$) through the blur operator, \ie $$b_i=\langle A_i,u^{\star}\rangle+\mathcal{N}(0,5*10^{-3}),$$ where $\mathcal{N}(0,\sigma^2)$ is the normal distribution with $0$ mean and $\sigma^2$ variance), and $K_x, K_y\in \R^{N\times N}$ the discrete derivative operator on $x$ and $y$, respectively, with rows $(K_x)_{i=1}^N$ and $(K_y)_{i=1}^N$, respectively.
%
%\smallskip
%
%We model the image deblurring problem with logarithmic regularizer as follows:
%$$\min_{u\in\R^N} f(u)\eqdef \frac{1}{2}\sum_{i=1}^N (\langle A_i,u\rangle-b_i)^2+\frac{\mu}{2}\sum_{i=1}^N \log(\rho+(\langle (K_x)_i,u\rangle)^2+(\langle (K_y)_i,u\rangle)^2),$$
%where $\mu,\rho$ are positive constants for numerical stability set to $5*10^{-5}$ and $10^{-3}$, respectively.\\
%\end{comment}

In the task of image deblurring, we are given a blurry and noisy (gray-scale) image $b\in \R^{n_x\times n_y}$ of size $n_x\times n_y$. The blur corresponds to a convolution with a known low-pass kernel. Let $A:\R^{n_x\times n_y}\rightarrow\R^{n_x\times n_y}$ be the blur linear operator. We aim to solve the (linear) inverse problem of reconstructing $u^{\star}\in\R^{n_x\times n_y}$ from the relation $b=A\bar{u}+\xi$, where $\xi$ is the noise, that is additive pixel-wise, has $0-$mean and is Gaussian. Through this experiment, we used $n_x=n_y=256$.

\smallskip

In order to reduce noise amplification when inverting the operator $A$, we solve a regularized optimization problem to recover $u^{\star}$ as accurately as possible. As natural images can be assumed to be smooth except for a (small) edge-set between objects in the image, we use a non-convex logarithmic regularization term that penalizes finite forward differences in horizontal and vertical directions of the image, implemented as linear operators $K_x,K_y:\R^{n_x\times n_y}\rightarrow\R^{n_x\times n_y}$ with Neumann boundary conditions. In summary, we aim to solve the following:
\[
\min_{u\in \R^{n_x\times n_y}} f(u), \quad f(u)\eqdef \frac{1}{2}\Vert Au-b\Vert^2+\frac{\mu}{2}\sum_{i=1}^{n_x}\sum_{j=1}^{n_y}\log (\rho+(K_x u)_{i,j}^2+(K_y u)_{i,j}^2),
\]
where $\mu,\rho$ are positive constants for regularization and numerical stability set to $5 \cdot 10^{-5}$ and $10^{-3}$, respectively. $f$ definable as the sum of compositions of definable mappings, and $\nabla f$ is Lipschitz continuous.

To solve the above optimization problem, we have used \eqref{isehd-disc} and \eqref{isihd-disc} with parameters $\beta=1.3,$ $\gamma_k \equiv 0.25, h=0.5$, and initial conditions $x_0=x_1=0_{n_x\times n_y}$. We compared both algorithms with the baseline algorithms \eqref{ngd}, \eqref{nhbf} (with the same initial condition). All algorithms were run for $250$ iterations. The results are shown in Figure~\ref{posterisehd} and Figure~\ref{fig2}.

In Figure~\ref{posterisehd}, the original image $\bar{u}$ is shown on the left. In the middle, we display the blurry and noise image $b$. Finally, the image recovered by \eqref{isehd-disc} is shown on the right.

\begin{figure}[H]
    \centering
    \includegraphics[scale=0.9]{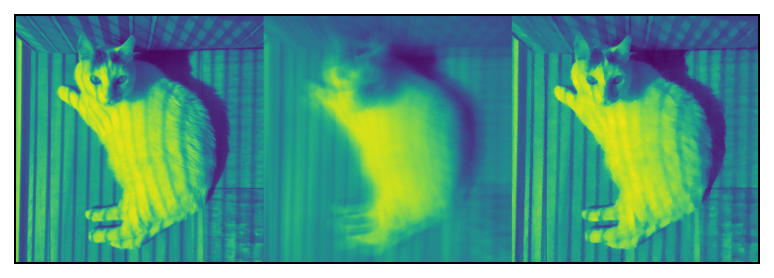}
    \caption{Results of the discretization of ISEHD.}
    \label{posterisehd}
\end{figure}

In Figure~\ref{fig2}, see that the residual plots of \eqref{isehd-disc} and \eqref{isihd-disc} overlap. Again, as expected, the trajectory of \eqref{isehd-disc} and \eqref{isihd-disc} has much less oscillation than \eqref{nhbf} which is a very desirable feature in practice. At the same time, \eqref{isehd-disc} and \eqref{isihd-disc} seem to converge faster, though \eqref{nhbf} eventually shows a similar convergence rate. Again, \eqref{ngd} is the slowest. Overall, \eqref{isehd-disc} and \eqref{isihd-disc} seem to take the best of both worlds: small oscillations and a faster asymptotic convergence rate.

\begin{figure}[H]
    \centering
    \begin{subfigure}[b]{0.49\textwidth}
         \centering
         \includegraphics[width=\textwidth]{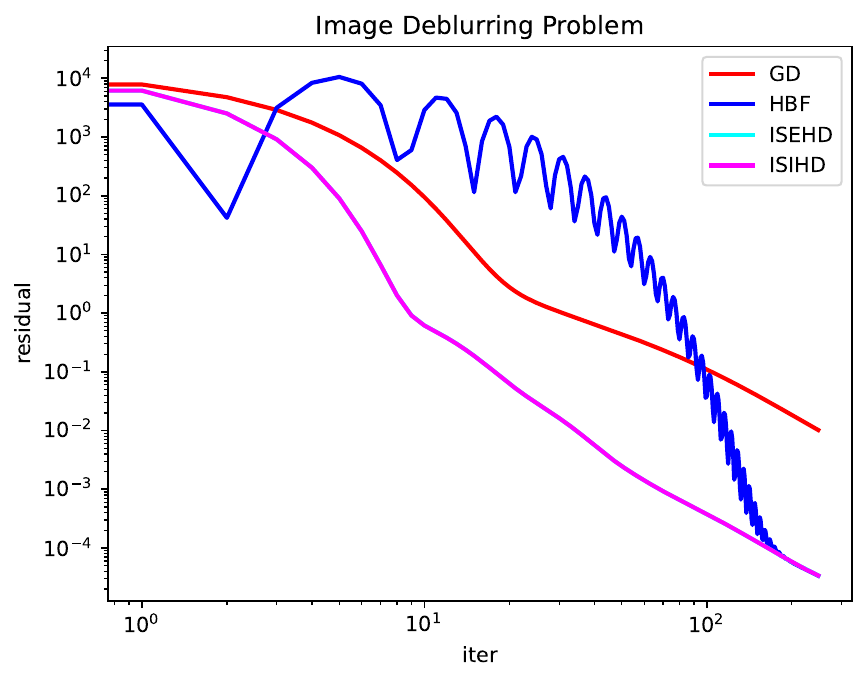}
         \caption{Residual vs Iteration.}
         \label{itres}
    \end{subfigure}
    \begin{subfigure}[b]{0.49\textwidth}
         \centering
         \includegraphics[width=\textwidth]{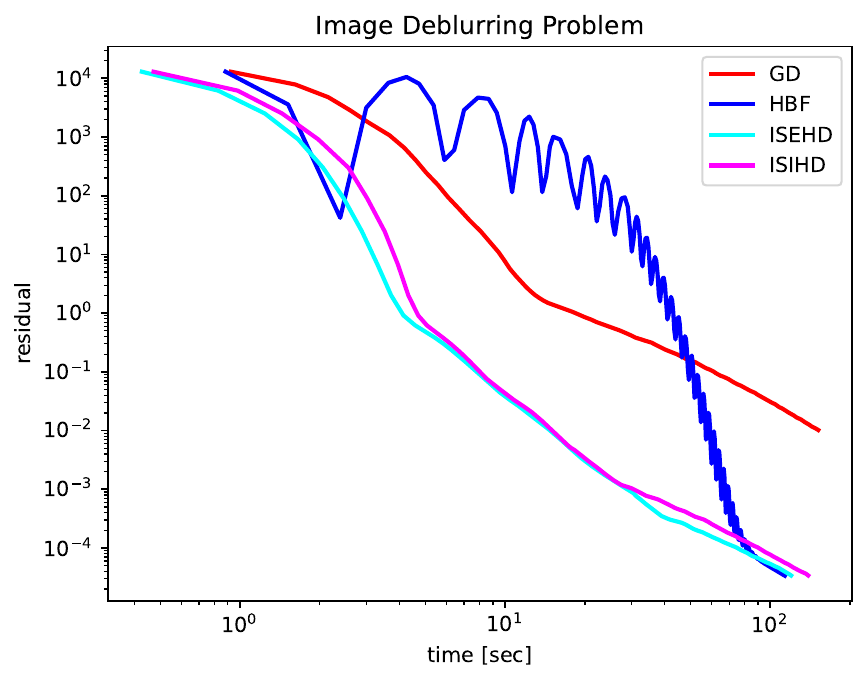}
         \caption{Residual vs Time.}
         \label{timeres}
    \end{subfigure}
    \caption{Results on the image deblurring problem.}\label{fig2}
\end{figure}

\smallskip

\begin{comment}
In terms of the resolution of the final image returned by \eqref{ngd} and \eqref{isehd-disc}, there are no significant differences to the eye. However, since the one generated by \eqref{isehd-disc} (and also by 
\eqref{isihd-disc} and \eqref{nhbf}) has a faster convergence rate than \eqref{ngd}, we will prefer the former method to obtain a lower residual (\ie a ``better'' solution) for practically any number of iterations.
\end{comment}

%\todo{The description of the experiments is not complete. The figures need more comments (what are you showing) and better discussion (why ISEHD is better).}

%%%%%%%%%%%%%%%%%%%%%%%%%%%%%%%%%%%%%%%%%%%%%%%%%%%%%%
\section{Conclusion and Perspectives}
%%%%%%%%%%%%%%%%%%%%%%%%%%%%%%%%%%%%%%%%%%%%%%%%%%%%%%
%\todo{I am not should we should keep this. The paper is already long.}
We conclude that:
\begin{itemize}
    \item  Under definability conditions on the objective and suitable conditions on $\gamma$, we obtain convergence of the trajectory of \eqref{eq:isehd} and \eqref{eq:isihd}. Besides, in the autonomous setting, and under a Morse condition, the trajectory almost surely converges to a local minimum of the objective.
    \item We obtain analogous properties for the respective proposed algorithmic schemes \eqref{isehd-disc} and \eqref{isihd-disc}.
    \item The inclusion of the term $\beta$ helps to reduce oscillations towards critical points, and, when chosen appropriately, without reducing substantially the speed of convergence of the case $\beta=0$, \ie the one for Heavy Ball with Friction method.
    \item The selection of $\beta$ is important. If it is chosen too close to zero, it may not significantly reduce oscillations. Conversely, if it is chosen too large (even within theoretical bounds), the trade-off for reduced oscillations might be a worse convergence rate.
\end{itemize}

Several open problems are worth investigating in the future:
\begin{comment}
    %\begin{itemize}
    \item 
 We could adapt \eqref{isehd-disc} and its properties to the Bregman case. For a differentiable function $g:\mathrm{dom}(g)\rightarrow\R$, we define $D_g:\mathrm{dom}(g)\times\mathrm{int}\phantom{.} \mathrm{dom}(g)\rightarrow\R$ as: $$D_g(x,y)\eqdef g(x)-g(y)-\langle \nabla g(y),x-y\rangle.$$  
    We consider a function $\phi:\mathrm{dom}(\phi)\rightarrow\R$ such that:
    \begin{itemize}
        \item is differentiable;  
        \item is $\mu-$ strongly convex with respect to some norm;
        \item \label{th} there exists $L_f>0$ such that $L_f\phi-f$ is a convex function. 
    \end{itemize} 
    And we consider the algorithm:
    \begin{equation}\label{isehd-disc-breg}\tag{ISEHD-Disc-Breg}
        \begin{aligned}
            \begin{cases}
                y_k&=x_k+\alpha_k(x_k-x_{k-1})-\beta_k(\nabla f(x_k)-\nabla f(x_{k-1})),\\
                x_{k+1}&=\argmin_{x\in\R^d}\langle \nabla f(x_k),x-y_k\rangle+\frac{1}{s_k}D_{\phi}(x,y_k).
            \end{cases}
        \end{aligned}
    \end{equation}
    We notice that we recover \eqref{isehd-disc} when $\phi(x)=\frac{\Vert x\Vert_2^2}{2}$.
    
    \smallskip
 \end{comment}   
 \begin{itemize}
 \item Replacing the global Lipschitz continuity assumption on the gradient in Theorems~\ref{convisehddisc} and \ref{convisihddisc} with a local Lipschitz continuity assumption as proposed in \cite{panageas, cedric,jia}.
 %\item Proposing discrete schemes of \eqref{eq:isehd} and \eqref{eq:isihd} with a variable stepsize, which can be computed, for instance, by backtracking.
 \item Extending our results to (non-euclidian) Bregman geometry.
 \item Extending our results to the non-smooth setting as proposed in \cite{casterainertial}.
 \end{itemize}

\bibliographystyle{unsrt}
\smaller
\bibliography{citas}

\begin{thebibliography}{10}

\bibitem{palis}
J.~Palis and W.~de~Melo.
\newblock Geometric theory of dynamical systems: An introduction.
\newblock {\em Springer-Verlag}, 1982.

\bibitem{loj1}
S.~{\L}ojasiewicz.
\newblock Une propri\'et\'e topologique des sous-ensembles analytiques r\'eels.
\newblock {\em Les \'equations aux d\'eriv\'ees partielles}, 117:87--89, 1963.

\bibitem{loj2}
S.~{\L}ojasiewicz.
\newblock Ensembles semi-analytiques.
\newblock {\em Lectures Notes IHES (Bures-sur-Yvette)}, 1965.

\bibitem{kur}
K.~Kurdyka.
\newblock On gradients of functions definable in o-minimal structures.
\newblock {\em Ann. Inst. Fourier}, 48 (3):769--783, 1998.

\bibitem{chill}
Ralph Chill and Alberto Fiorenza.
\newblock Convergence and decay rate to equilibrium of bounded solution of quasilinear parabolic equations.
\newblock {\em Jounral of Differential Equations}, 228:611--632, 2006.

\bibitem{aris}
J\'{e}r\^{o}me Bolte, Aris Daniilidis, and Adrian Lewis.
\newblock The {{\L}}ojasiewicz inequality for nonsmooth subanalytic functions with applications to subgradient dynamical systems.
\newblock {\em SIAM J. Opt.}, 17 (4):1205--1223, 2007.

\bibitem{boltearis}
J\'{e}r\^{o}me Bolte, Aris Daniilidis, Olivier Ley, and Laurent Mazet.
\newblock Characterizations of {{\L}}ojasiewicz inequalities and applications.
\newblock {\em Transactions of the American Mathematical Society}, 362 (6):3319--3363, 2008.

\bibitem{boltequasi}
P.~B\'egout, J.~Bolte, and M.~Jendoubi.
\newblock On damped second-order gradient systems.
\newblock {\em Journal of Differential Equations}, 259:3115--3143, 2015.

\bibitem{bolte}
J\'{e}r\^{o}me Bolte, Trong~Phong Nguyen, Juan Peypouquet, and Bruce~W. Suter.
\newblock From error bounds to the complexity of first-order descent methods for convex functions.
\newblock In Jon Lee and Sven Leyffer, editors, {\em Mathematical Programming}, volume 165, pages 471--507. Springer, 2016.

\bibitem{frankel}
Pierre Frankel, Guillaume Garrigos, and Juan Peypouquet.
\newblock Splitting methods with variable metric for {K}urdyka- {{\L}}ojasiewicz functions and general convergence rates.
\newblock {\em Journal of Optimization Theory and Applications}, 165(3):874--900, 2014.

\bibitem{noll}
Dominikus Noll.
\newblock Convergence of non-smooth descent methods using the {K}urdyka{\^a}-{\l}ojasiewicz inequality.
\newblock {\em Journal of Optimization Theory and Applications}, 160:553--572, 2014.

\bibitem{aujol}
J-F. Aujol, Ch. Dossal, and A.~Rondepierre.
\newblock Convergence rates of the {H}eavy-ball method with {{\L}}ojasiewicz property.
\newblock {\em HAL-02928958}, 2020.

\bibitem{pol}
Hamed Karimi, Julie Nutini, and Mark Schmidt.
\newblock Linear convergence of gradient and proximal-gradient methods under the polyak-lojasiewicz condition.
\newblock In {\em Machine Learning and Knowledge Discovery in Databases}. Springer, 2016.

\bibitem{absil}
P.A. Absil, R.~Mahony, and B.~Andrews.
\newblock Convergence of the iterates of descent methods for analytic cost functions.
\newblock {\em Siam Journal on Optimization}, 16 (2):531--547, 2005.

\bibitem{nonsm}
J\'{e}r\^{o}me Bolte, Aris Daniilidis, and Adrian Lewis.
\newblock The {{\L}}ojasiewicz inequality for nonsmooth subanalytic functions with applications to subgradient dynamical systems.
\newblock {\em SIAM Journal on Optimization}, 17 (4):1205--1223, 2007.

\bibitem{tam}
Hedy Attouch, J\'{e}r\^{o}me Bolte, and Benar~Fux Svaiter.
\newblock Convergence of descent methods for semi-algebraic and tame problems: proximal algorithms, forward-backward splitting, and regularized {G}auss-{S}eidel methods.
\newblock {\em Mathematical Programming}, Ser. A, 137:91--129, 2013.

\bibitem{ipiano}
Peter Ochs, Yunjin Chen, Thomas Brox, and Thomas Pock.
\newblock ipiano: Inertial proximal algorithm for nonconvex optimization.
\newblock {\em SIAM Journal on Imaging Sciences}, 7 (2):1388--1419, 2014.

\bibitem{clarke}
J\'{e}r\^{o}me Bolte, Aris Daniilidis, and Masahiro Shiota.
\newblock Clarke subgradients of stratifiable functions.
\newblock {\em SIAM Journal on Optimization}, 18 (2):556--572, 2007.

\bibitem{AC1}
Hedy Attouch and Alexandre Cabot.
\newblock Asymptotic stabilization of inertial gradient dynamics with time-dependent viscosity.
\newblock {\em J. Differential Equations}, 263(9):5412--5458, 2017.

\bibitem{su}
Weijie Su, Stephen Boyd, and Emmanuel~J. Cand\`es.
\newblock A differential equation for modeling {N}esterov's accelerated gradient method: Theory and insights.
\newblock {\em Journal of Machine Learning Research}, 17:1--43, 2016.

\bibitem{1983}
Y.E. {N}esterov.
\newblock A method of solving a convex programming problem with convergence rate $\mathcal{O}(1/k^2)$.
\newblock {\em Doklady Akademii Nauk SSSR}, 269(3):543--547, 1983.

\bibitem{cabot}
H.~Attouch and A.~Cabot.
\newblock Asymptotic stabilization of inertial gradient dynamics with time-dependent viscosity.
\newblock {\em Journal of Differential Equations}, 263-9:5412--5458, 2017.

\bibitem{faster1k2}
Hedy Attouch and Juan Peypouquet.
\newblock The rate of convergence of {N}esterov's accelerated forward-backward method is actually faster than $\frac{1}{k^2}$.
\newblock {\em SIAM Journal on Optimization}, 26(3):1824--1834, 2016.

\bibitem{CD}
A.~Chambolle and Ch. Dossal.
\newblock On the convergence of the iterates of the ``fast iterative shrinkage/thresholding algorithm''.
\newblock {\em J. Optim. Theory Appl.}, 166(3):968--982, 2015.

\bibitem{speedingup}
Boris Polyak.
\newblock Some methods of speeding up the convergence of iteration methods.
\newblock {\em USSR Computational Mathematics and Mathematical Physics}, 1964.

\bibitem{Alv}
Felipe Alvarez.
\newblock On the minimizing property of a second order dissipative system in {H}ilbert spaces.
\newblock {\em SIAM J. Control Optim.}, 38(4):1102--1119, 2000.

\bibitem{27}
A.~Haraux and Jendoubi M.A.
\newblock On a second order dissipative {ODE} in {H}ilbert space with an integrable source term.
\newblock {\em Acta Math. Sci.}, 32:155--163, 2012.

\bibitem{heavyb}
Hedy Attouch, Xavier Goudou, and Patrick Redont.
\newblock The heavy ball with friction method. i- the continuous dynamical system.
\newblock {\em Communications in Contemporary Mathematics}, 2 (1), 2011.

\bibitem{goudou}
X.~Goudou and J.~Munier.
\newblock The gradient and heavy ball with friction dynamical systems: the quasiconvex case.
\newblock {\em Mathematical Programming}, Ser B, 116:173--191, 2009.

\bibitem{apido}
Vassilis Apidopoulos, Nicolo Ginatta, and Silvia Villa.
\newblock Convergence rates for the heavy-ball continuous dynamics for non-convex optimization, under {P}olyak-{{\L}}ojasiewicz condition.
\newblock {\em Journal of Global Optimization}, 84:563--589, 2022.

\bibitem{noncon}
S.K. Zavriev and F.V. Kostyuk.
\newblock Heavy-ball method in nonconvex optimization problems.
\newblock {\em Models of Ecologic and Economic Systems}, 4:336--341, 1993.

\bibitem{peter}
Peter Ochs.
\newblock Local convergence of the {H}eavy-ball method and i{P}iano for non-convex optimization.
\newblock {\em arXiv:1606.09070}, 2016.

\bibitem{AABR}
F.~Alvarez, H.~Attouch, J.~Bolte, and P.~Redont.
\newblock A second-order gradient-like dissipative dynamical system with {H}essian-driven damping. {A}pplication to optimization and mechanics.
\newblock {\em J. Math. Pures Appl. (9)}, 81(8):747--779, 2002.

\bibitem{APR2}
Hedy Attouch, Juan Peypouquet, and Patrick Redont.
\newblock Fast convex optimization via inertial dynamics with {H}essian driven damping.
\newblock {\em J. Differential Equations}, 261(10):5734--5783, 2016.

\bibitem{9}
H.~Attouch, Z.~Chbani, J.~Fadili, and H.~Riahi.
\newblock First order optimization algorithms via inertial systems with {H}essian driven damping.
\newblock {\em Mathematical Programming}, 193:113--155, 2020.

\bibitem{alecsa}
C.~Alecsa, S.~L{\'a}szl{\'o}, and T.~Pinta.
\newblock An extension of the second order dynamical system that models {N}esterov's convex gradient method.
\newblock {\em Appl. Math. Optim.}, 84:1687--1716, 2021.

\bibitem{MJ}
Michael Muehlebach and Michael~I. Jordan.
\newblock Optimization with momentum: dynamical, control-theoretic, and symplectic perspectives.
\newblock {\em J. Mach. Learn. Res.}, 22:Paper No. 73, 50, 2021.

\bibitem{Muehlebach19}
M.~Muehlebach and M.~I. Jordan.
\newblock A dynamical systems perspective on {N}esterov acceleration.
\newblock In {\em Proceedings of the 36 th International Conference on Machine Learning}, volume~97. PMLR, 2019.

\bibitem{hessianpert}
Hedy Attouch, Jalal Fadili, and Vyacheslav Kungurtsev.
\newblock On the effect of perturbations in first-order optimization methods with inertia and {H}essian driven damping.
\newblock {\em Evolution equations and Control}, 12(1):71--117, 2023.

\bibitem{casterainertial}
Camille Castera, Jerome Bolte, Cédric Févotte, and Edouard Pauwels.
\newblock An inertial {N}ewton algorithm for deep learning.
\newblock {\em Journal of Machine Learning}, 22:1--31, 2021.

\bibitem{casteraavoid}
Camille Castera.
\newblock Inertial newton algorithms avoiding strict saddle points.
\newblock {\em arXiv:2111.04596}, 2021.

\bibitem{35}
B.~Shi, S.S. Du, M.I. Jordan, and Su~W.J.
\newblock Understanding the acceleration phenomenon via high resolution differential equations.
\newblock {\em Math. Program.}, 2021.

\bibitem{8}
H.~Attouch, A.~Cabot, Chbani Z., and H.~Riahi.
\newblock Accelerated forward-backward algorithms with perturbations: Application to {T}ikhonov regularization.
\newblock {\em J. Optim. Theory Appl.}, 179:1--36, 2018.

\bibitem{11}
H.~Attouch, Z.~Chbani, J.~Peypouquet, and P.~Redont.
\newblock Fast convergence of inertial dynamics and algorithms with asymptotic vanishing viscosity.
\newblock {\em Math. Program. Ser. B}, 168:123--175, 2018.

\bibitem{20}
C.~Dossal and J.F. Aujol.
\newblock Stability of over-relaxations for the forward-backward algorithm, application to fista.
\newblock {\em SIAM J. Optim.}, 25:2408--2433, 2015.

\bibitem{34}
M.~Schmidt, N.~Le~Roux, and F.~Bach.
\newblock Convergence rates of inexact proximal-gradient methods for convex optimization.
\newblock {\em NIPS'11}, 25th Annual Conference, 2011.

\bibitem{37}
S.~Villa, S.~Salzo, and Baldassarres L.
\newblock Accelerated and inexact forward-backward.
\newblock {\em SIAM J. Optim.}, 23:1607--1633, 2013.

\bibitem{19}
H.~Attouch, J.~Peypouquet, and P.~Redont.
\newblock Fast convex minimization via inertial dynamics with {H}essian driven damping.
\newblock {\em J. Differential Equations}, 261:5734--5783, 2016.

\bibitem{10}
H.~Attouch, Z.~Chbani, J.~Fadili, and H.~Riahi.
\newblock Convergence of iterates for first-order optimization algorithms with inertia and {H}essian driven damping.
\newblock {\em Optimization}, 2021.

\bibitem{haraux}
A.~Haraux and M.A. Jendoubi.
\newblock Convergence of solution of second-order gradient-like systems with analytic nonlinearities.
\newblock {\em Journal of Differential Equations}, 144:313--320, 1998.

\bibitem{AttouchAlternating10}
H.~Attouch, J.~Bolte, P.~Redont, and A.~Soubeyran.
\newblock Proximal alternating minimization and projection methods for nonconvex problems: An approach based on the kurdyka-{{\L}}ojasiewicz inequality.
\newblock {\em Mathematics of Operations Research}, 35(2):438--457, 2010.

\bibitem{jingwei}
Jingwei Liang.
\newblock Convergence rates of first-order operator splitting methods.
\newblock {\em Th\`ese de doctorat}, 2016.

\bibitem{blockcoord}
Peter Ochs.
\newblock Unifying abstract inexact convergence theorems and block coordinate variable metric ipiano.
\newblock {\em SIAM Journal on Optimization}, 29 (1):541--570, 2019.

\bibitem{anabstract}
Silvia Bonettini, Peter Ochs, Marco Prato, and Simone Rebegoldi.
\newblock An abstract convergence framework with application to inertial inexact forward-backward methods.
\newblock {\em Computational Optimization and Applications}, 84:319--362, 2023.

\bibitem{bot1}
Radu~Ioan Bot and Erno~Robert Csetnek.
\newblock An inertial tseng's type proximal algorithm for nonsmooth and nonconvex optimization problems.
\newblock {\em Journal of Optimization Theory and Applications}, 171:600--616, 2015.

\bibitem{proximal}
J\'{e}r\^{o}me Bolte, Shoham Sabach, and Marc Teboulle.
\newblock Proximal alternating linearized minimization for nonconvex and nonsmooth problems.
\newblock {\em Mathematical Programming}, 146:459--494, 2014.

\bibitem{bot2}
Radu~Ioan Bot, Erno~Robert Csetnek, and Szilard~Csaba Laszlo.
\newblock An inertial forward-backward algorithm for the minimization of the sum of two nonconvex functions.
\newblock {\em EURO Journal on Computational Optimization}, 4 (1):3--25, 2016.

\bibitem{fba}
Szilárd~Csaba László.
\newblock Forward-backward algorithms with different inertial terms for structured non-convex minimization problems.
\newblock {\em Journal of Optimization Theory and Applications}, 198:387--427, 2023.

\bibitem{gip}
Zhongming Wu and Min Li.
\newblock General inertial proximal gradient method for a class of nonconvex nonsmooth optimization problems.
\newblock {\em Computation Optimization and Applications}, 73:129--158, 2019.

\bibitem{shub_global_1987}
M.~Shub.
\newblock {\em Global {Stability} of {Dynamical} {Systems}}.
\newblock Springer, 1987.

\bibitem{Brunovsky97}
P.~Brunovsk\'y and P.~Pol\'acak.
\newblock The {M}orse-{S}male structure of a generic reaction-diffusion equation in higher space dimension.
\newblock {\em Journal of Differential Equations}, 135(1):129--181, 1997.

\bibitem{BrandiereDuflo96}
Odile Brandi\`ere and Marie Duflo.
\newblock Les algorithmes stochastiques contournent-ils les pi\`eges ?
\newblock {\em Annales de l'I.H.P. Probabilit\'es et statistiques}, 32(3):395--427, 1996.

\bibitem{Pemantle90}
Robin Pemantle.
\newblock {Nonconvergence to Unstable Points in Urn Models and Stochastic Approximations}.
\newblock {\em The Annals of Probability}, 18(2):698 -- 712, 1990.

\bibitem{agd}
Chi Jin, Praneeth Netrapalli, and Michael~I. Jordan.
\newblock Accelerated gradient descent escapes saddle points faster than gradient descent.
\newblock {\em Proceedings of Machine Learning Research}, 75:1--44, 2018.

\bibitem{gadat2}
S\'ebastien Gadat, Fabien Panloup, and Saadane Sofiane.
\newblock Stochastic heavy ball.
\newblock {\em Electronic Journal of Statistics}, 12:461--529, 2018.

\bibitem{hbnoise}
Wenqing Hu, Chris Junchi~Li, and Xiang Zhou.
\newblock On the global convergence of continuous-time stochastic heavy ball method for nonconvex optimization.
\newblock {\em arXiv:1712.05733v4}, 2019.

\bibitem{Lee16}
Jason~D. Lee, Max Simchowitz, Michael~I. Jordan, and Benjamin Recht.
\newblock Gradient descent only converges to minimizers.
\newblock In Vitaly Feldman, Alexander Rakhlin, and Ohad Shamir, editors, {\em 29th Annual Conference on Learning Theory}, volume~49 of {\em Proceedings of Machine Learning Research}, pages 1246--1257, Columbia University, New York, New York, USA, 23--26 Jun 2016. PMLR.

\bibitem{localminima}
Jason~D. Lee, Ioannis Panageas, Georgios Piliouras, Max Simchowitz, Michael~I. Jordan, and Benjamin Recht.
\newblock First-order methods almost always avoid strict saddle points.
\newblock {\em Mathematical Programming}, 176:311--337, 2019.

\bibitem{panageas}
Ioannis Panageas and Georgios Piliouras.
\newblock Gradient descent only converges to minimizers: Non-isolated critical points and invariant regions.
\newblock {\em 8th Innovations in Theoretical Computer Science Conference}, 2012.

\bibitem{cedric}
C\'edric Josz.
\newblock Global convergence of the gradient method for functions definable in o-minimal structures.
\newblock {\em Mathematical Programming}, 202:355--383, 2023.

\bibitem{wright}
Michael O'Neill and Stephen~J. Wright.
\newblock Behavior of accelerated gradient methods near critical points of nonconvex functions.
\newblock {\em Mathematical Programming}, 176:403--427, 2019.

\bibitem{bloc}
J.R. Silvester.
\newblock Determinant of block matrices.
\newblock {\em Math. Gaz.}, 84 (501):460--467, 2000.

\bibitem{aubinekeland}
Jean-Pierre Aubin and Ivar Ekeland.
\newblock Applied nonlinear analysis.
\newblock {\em Wiley-Interscience}, 1984.

\bibitem{coste2002intro}
M.~Coste.
\newblock An introduction to semialgebraic geometry.
\newblock Technical report, Institut de Recherche Mathematiques de Rennes, October 2002.

\bibitem{vandenDriesMiller96}
L.~{van den Dries} and C.~Miller.
\newblock Geometric categories and o-minimal structures.
\newblock {\em Duke Mathematical Journal}, 84:497--540, 1996.

\bibitem{omin}
M.~Coste.
\newblock An introduction to o-minimal geometry.
\newblock {\em RAAG Notes, Institut de Recherche Math{\~A}{\copyright}matiques de Rennes}, 1999.

\bibitem{haraux0}
A.~Haraux.
\newblock Syst\'emes dynamiques dissipatifs et applications.
\newblock {\em RMA 17}, 1991.

\bibitem{Perko96}
L.~Perko.
\newblock {\em Differential equations and dynamical systems}.
\newblock Springer, 1996.

\bibitem{garrigos}
Guillaume Garrigos.
\newblock Descent dynamical systems and algorithms for tame optimization and multi-objective problems.
\newblock {\em Th\'ese de doctorat}, 2015.

\bibitem{calckl}
Guoyin Li and Ting~Kei Pong.
\newblock Calculus of the exponent of {K}urdyka-{{\L}}ojasiewicz inequality and its applications to linear convergence of first-order methods.
\newblock {\em Foundation of Computational Mathematics}, 18:1199--1232, 2017.

\bibitem{attouchbolte}
Hedy Attouch and J\'{e}r\^{o}me Bolte.
\newblock On the convergence of the proximal algorithm for nonsmooth functions involving analytic features.
\newblock {\em Mathematical Programming}, 116 (1):5--16, 2007.

\bibitem{implicitreg}
Nathan Buskulic, Jalal Fadili, and Yvain Quéau.
\newblock Implicit regularization of the deep inverse prior trained with inertia.
\newblock {\em To be published}, 2025.

\bibitem{jia}
Xioaxi Jia, Christian Kanzow, and Patrick Mehlitz.
\newblock Convergence analysis of the proximal gradient method in the presence of the {K}urdyka-{\l}ojasiewicz property without global {L}ipschitz assumptions.
\newblock {\em SIAM Journal on Optimization}, 33 (4), 2023.

\end{thebibliography}

\end{document}